\newtheorem*{cuhcrit}{Kreisel's Criterion}
\newtheorem*{cuhhypo}{Computable Universe Hypothesis}
\newtheorem{cuhdef}{Definition}[section]
\newtheorem{cuhmodel}[cuhdef]{Model}
\newtheorem{cuhtheorem}[cuhdef]{Theorem}
\newcommand{\cuhinterval}[2]{(#1\,;#2)}
\newcommand{\cuhbinterval}[2]{\bigl(#1\,;#2\bigr)}
\newcommand{\cuhBinterval}[2]{\Bigl(#1\,;#2\Bigr)}
\newcommand{\cuhomicron}{o}
\DeclareMathOperator{\cuhdom}{dom}
\DeclareMathOperator{\cuhsgn}{sgn}
\providecommand{\bysame}{\leavevmode\hbox to3em{\hrulefill}\thinspace}
\begin{document}

\title{The Computable Universe Hypothesis}

\author{Matthew~P. Szudzik}

\date{28 January 2012}

\begin{abstract}
When can a model of a physical system be regarded as computable?  We provide the definition of a \emph{computable physical model} to answer this question.  The connection between our definition and Kreisel's notion of a mechanistic theory is discussed, and several examples of computable physical models are given, including models which feature discrete motion, a model which features non-discrete continuous motion, and probabilistic models such as radioactive decay.  We show how computable physical models on effective topological spaces can be formulated using the theory of type-two effectivity (TTE).  Various common operations on computable physical models are described, such as the operation of coarse-graining and the formation of statistical ensembles.  The definition of a computable physical model also allows for a precise formalization of the \emph{computable universe hypothesis}---the claim that all the laws of physics are computable.
\end{abstract}
\maketitle

\section{Introduction}

A common way to formalize the concept of a \emph{physical\index{physical models} model} is to identify the states of the system being modeled with the members of some set $S$, and to identify each observable quantity of the system with a function from $S$ to the real numbers.\footnote{A more detailed account of this formalism is available in reference~\cite{rR78}.}
For example, a simple model of planetary motion, with the Earth moving in a circular orbit and traveling at a uniform speed, is the following.
\begin{cuhmodel}[Simple Planetary Motion] \label{m:real}
Let $S$ be the set of all pairs of real numbers $(t,a)$ such that $a=360\bigl(t-\lfloor t\rfloor\bigr)$, where $\lfloor t\rfloor$ denotes the largest integer less than or equal to $t$.  The angular position of the Earth, measured in degrees, is given by the function $\alpha(t,a)=a$.  The time, measured in years, is given by the function $\tau(t,a)=t$.
\end{cuhmodel}
\noindent
If we wish, for example, to compute the position of the Earth after $2.25$ years, we ask: ``For which states $(t,a)$ does $\tau(t,a)=2.25$?''  There is only one such state, namely $(2.25,90)$.  Therefore, the position of the Earth after $2.25$ years is $\alpha(2.25,90)=90$ degrees.  We say that the model is \emph{faithful}\index{physical models!faithful} if and only if the values of the observable quantities in the model match the values that are physically observed.

Church\index{Church, Alonzo} and Turing\index{Turing, Alan} hypothesized that the functions which are effectively computable by humans are exactly the recursive functions.\footnote{Readers unfamiliar with the definition of a recursive function or related terminology, such as uniformity, should consult reference~\cite{hR67}.  The original justifications for identifying the effectively computable functions with the recursive functions can be found in references~\cite{aC36,aT36,aT37b}.}
There have been several attempts~\cite{rR62,kZ67,eF90,sW02,mT08} to extend the Church-Turing\index{Church-Turing thesis} thesis to physics, hypothesizing that the laws of physics are, in some sense, computable.  But given an arbitrary physical model, it has not been clear exactly how one determines whether or not that model is to be regarded as computable.  To date, the best attempt at providing such a definition has been Kreisel's\index{Kreisel, Georg} notion of a mechanistic\index{mechanistic theories} theory~\cite{gK74}.  Kreisel\index{Kreisel, Georg} suggested the following.
\begin{cuhcrit}
\index{Kreisel's criterion}The predictions of a physical model are to be regarded as computable if and only if every real number which is observable according to the model is recursive relative to the data uniformly.
\end{cuhcrit}
\noindent
But many seemingly innocuous models have failed to satisfy Kreisel's criterion.  For example, the simple model of planetary motion (Model~\ref{m:real}) fails because given a real number representing the time $t$, there is no effectively computable procedure which determines the corresponding angle $a$ when $a$ is near the discontinuity at $360$ degrees.  Models which intuitively seem to have computable predictions often fail to satisfy Kreisel's criterion because discontinuities in their formalisms prevent the models' predictions from being regarded as computable, despite the fact that there are no discontinuities in the actual physical phenomena being modeled~\cite{wM95}.

Rather than using Kreisel's criterion to prove that the predictions of established models are computable, an alternate approach is to supply a restrictive formalism which guarantees that the predictions of models expressible in that formalism are computable.  This has been the approach taken in references~\cite{rR62,kZ67,eF90,sW02}.  But difficulties have been encountered expressing important established models in these formalisms.  For example, Rosen\index{Rosen, Robert}~\cite{rR62} was unable to describe radioactive decay in the formalism that he had proposed, and work is ongoing to describe established  physical models in other computable formalisms.

It is the goal of this paper to provide a general formalism for describing physical models whose predictions are computable, and to show that the computable formalisms studied by previous authors are special cases of our general formalism.  In particular, we show in Section~\ref{s:kreisel} that among the members of a large class of physical models, each physical model satisfying Kreisel's criterion has a corresponding model in our formalism.  We also avoid some of the difficulties which, for example, prevented the simple model of planetary motion (Model~\ref{m:real}) from being regarded as computable, as will be seen in Section~\ref{s:continuous}.  Our approach also avoids the difficulty that Rosen\index{Rosen, Robert} encountered with radioactive decay, as will be seen in Section~\ref{s:radio}.

\section{Computable Physical Models} \label{s:cuh}

The central problem is that physical models use real numbers to represent the values of observable quantities, but that recursive functions are functions of non-negative integers, not functions of real numbers.  To show that a model is computable, the model must somehow be expressed using recursive functions.  Careful consideration of this problem, however, reveals that the real numbers are not actually necessary in physical models.  Non-negative integers suffice for the representation of observable quantities because numbers measured in laboratory experiments necessarily have only finitely many digits of precision.  For example, measurements of distances with a measuring stick will always be non-negative integer multiples of the smallest division on the measuring stick.  So, we suffer no loss of generality by restricting the values of all observable quantities to be expressed as non-negative integers---the restriction only forces us to make the methods of error analysis, which were tacitly assumed when dealing with real numbers, an explicit part of each model.

Non-negative integers are not only sufficient for the description of direct physical measurements, but are also sufficient for encoding more complex data structures---allowing us to define recursive functions on those data structures.  For example, a pair of two non-negative integers $x$ and $y$ can be encoded as a single non-negative integer $\langle x,y\rangle$ using Cantor's\index{Cantor, Georg} pairing function
\begin{equation*}
\langle x,y\rangle=\frac{1}{2}(x^2+2x y+y^2+3x+y)
\end{equation*}
A pair $\langle x,y\rangle$ of non-negative integers will also be called a \emph{length two sequence} of non-negative integers. A triple (or equivalently, length three sequence) of non-negative integers $x$, $y$, and $z$ can be encoded as $\langle\langle x,y\rangle,z\rangle$, and so on.  We write $\langle x,y,z\rangle$ as an abbreviation for $\langle\langle x,y\rangle,z\rangle$.  An integer $i$ can be encoded as a non-negative integer $\zeta(i)$ using the formula
\begin{equation*}
\zeta(i)=
\begin{cases}
-2i-1 &\text{if $i<0$}\\
2i &\text{if $i\geq 0$}
\end{cases}
\end{equation*}
And a rational number $\frac{a}{b}$ in lowest-terms with $b>0$ can be encoded as a non-negative integer $\rho(\frac{a}{b})$ using the formula
\begin{equation*}
\rho\bigl(\frac{a}{b}\bigr)=\zeta\bigl((\cuhsgn a)2^{\zeta(a_1-b_1)} 3^{\zeta(a_2-b_2)} 5^{\zeta(a_3-b_3)} 7^{\zeta(a_4-b_4)} 11^{\zeta(a_5-b_5)}\cdots\bigr)
\end{equation*}
where $a=(\cuhsgn a)2^{a_1} 3^{a_2} 5^{a_3} 7^{a_4} 11^{a_5}\cdots$ is the prime factorization of the integer $a$, and similarly for $b$.  We write $\cuhinterval{q}{r}$ as an abbreviation for the pair of rational numbers $\bigl\langle\rho(q),\rho(r)\bigr\rangle$.

Historically, authors who have wished to restrict themselves to physical models whose predictions are computable have chosen from among a handful of formalisms.  For example, Zuse\index{Zuse, Konrad}~\cite{kZ67} and Fredkin\index{Fredkin, Edward}~\cite{eF90} have formalized their models as cellular automata, with each cell of an automaton representing a discrete unit of space and each step of computation in the automaton representing a discrete unit of time.  Wolfram\index{Wolfram, Stephen}~\cite{sW02} has formalized his models in a variety of computational systems, including cellular automata, but has favored network systems for a model of fundamental physics.  In each of these cases, the states of a physical system are represented by the states of a computational system (for example, a cellular automaton or a network system) which can be encoded as non-negative integers using the techniques just described.  The resulting set of non-negative integers is a recursive set, and the observable quantities of the system are recursive functions of the members of that set. This immediately suggests the following definition.
\begin{cuhdef}
A \emph{computable\index{physical models!computable} physical model} of a system is a recursive set $S$ of states with a total recursive function $\phi$ for each observable quantity of the system.  $\phi(s)$ is the value of that observable quantity when the system is in state $s$.
\end{cuhdef}

So, in a computable physical model the set $S$ is a set of non-negative integers, and each observable quantity is a function from non-negative integers to non-negative integers.  The models considered by Zuse\index{Zuse, Konrad}, Fredkin\index{Fredkin, Edward}, and Wolfram\index{Wolfram, Stephen} are necessarily special sorts of computable physical models, and the set of all computable physical models is a proper subset of all physical models.  In order to avoid all ambiguity, we insist that observable quantities be defined operationally~\cite{pB27} in computable physical models, so that, for example, if there were an observable quantity corresponding to time, then that observable quantity would be the time as measured with a specific conventionally-chosen clock in a specific conventionally-chosen reference frame.

An immediate consequence of the definition of a computable physical model is that we can give a precise formal counterpart to the informal claim that all the laws of physics are computable.
\begin{cuhhypo}
\index{computable universe hypothesis}The universe has a recursive set of states $U$.  For each observable quantity, there is a total recursive function $\phi$.  $\phi(s)$ is the value of that observable quantity when the universe is in state $s$.
\end{cuhhypo}

By a \emph{distinguishable\index{distinguishable systems} system}, we mean any system for which there is an observable quantity $\phi$ such that $\phi(s)=1$ when the system exists in the universe, and such that $\phi(s)=0$ otherwise.  For example, if the system being studied is the orbit of the Earth, then $\phi(s)=0$ when state $s$ corresponds to a time before the formation of the Earth, and $\phi(s)=1$ when the Earth exists and is orbiting the Sun.  Note that the set of states $s$ in $U$ for which $\phi(s)=1$ is itself a recursive set whenever $U$ and $\phi$ are recursive.  So, the computable universe hypothesis implies that computable physical models are sufficient for modeling any distinguishable system in the universe---the set of states of that distinguishable system is the set of all members $s$ of $U$ for which $\phi(s)=1$, and the observable quantities of the distinguishable system are necessarily a subset of the observable quantities of the universe.

\section{Discrete Planetary Motion}

As a first example of a computable physical model, consider the following model of planetary motion.
\begin{cuhmodel}[Discrete Planetary Motion] \label{m:discrete}
Let $S$ be the set of all pairs $\bigl\langle\cuhinterval{r}{s},\cuhinterval{p}{q}\bigr\rangle$ such that
\begin{align*}
r&=\frac{i}{10}-\frac{1}{100} & p&=360\bigl(r-\lfloor r\rfloor\bigr)\\
s&=\frac{i+1}{10}+\frac{1}{100} & q&=360\bigl(s-\lfloor s\rfloor\bigr)
\end{align*}
for some integer $i$ between $-20000$ and $20000$.  The angular position of the Earth, represented as a range of angles measured in degrees, is given by the function $\alpha\bigl\langle\cuhinterval{r}{s},\cuhinterval{p}{q}\bigr\rangle=\cuhinterval{p}{q}$.  The time interval, measured in years, is given by the function $\tau\bigl\langle\cuhinterval{r}{s},\cuhinterval{p}{q}\bigr\rangle=\cuhinterval{r}{s}$.
\end{cuhmodel}

This is a discrete model.  That is, the position of the Earth in its orbit is not an exact real number, such as $90$ degrees, but is instead an interval, such as $\cuhinterval{68.4}{111.6}$ representing a range of angles between $68.4$ degrees and $111.6$ degrees.\footnote{We use decimal numbers to represent exact rational numbers.  For example, $68.4$ is to be understood as an abbreviation for $\frac{684}{10}$.}
Similarly, time is measured in discrete intervals of length $0.12$ years.  The earliest time interval in the model is near the year $-2000$ and the latest time interval is near the year $2000$.  Moreover, this model is faithful---it is in exact agreement with all observations.

There are ten possible measurements for the angular position of the Earth in Model~\ref{m:discrete}:
\begin{align*}
&\cuhinterval{32.4}{75.6} & &\cuhinterval{68.4}{111.6} & &\cuhinterval{104.4}{147.6} & &\cuhinterval{140.4}{183.6} & &\cuhinterval{176.4}{219.6}\\
&\cuhinterval{212.4}{255.6} & &\cuhinterval{248.4}{291.6} & &\cuhinterval{284.4}{327.6} & &\cuhinterval{320.4}{3.6} & &\cuhinterval{356.4}{39.6}
\end{align*}
These are the intervals obtained by dividing the $360$ degrees of the circle into ten equal intervals of $36$ degrees each, then extending each interval by exactly $3.6$ degrees on both sides, bringing the total length of each interval to $43.2$ degrees.  Therefore, consecutive intervals overlap by $7.2$ degrees (there is also overlap in consecutive time intervals), and this serves an important purpose.  The Earth's orbit is not, in reality, a perfect circle, and the Earth does not spend an equal amount of time in each of the intervals.  But because the eccentricity of the Earth's orbit contributes to, at most, only about a $2$ degree deviation~\cite{jE98} from the simple model of planetary motion (Model~\ref{m:real}), the overlap of these intervals is more than adequate to conceal evidence of the eccentricity, ensuring that this discrete model is faithful.  Also note that the overlap is a realistic feature of all known instruments which measure angles, since each such instrument has only a limited accuracy.  If angles are measured with a protractor, for example, the accuracy might be limited by the thickness of the lines painted on the protractor, which divide one reading from another.  For example, if the lines are $7.2$ degrees thick, then it might not be possible to distinguish a reading of $\cuhinterval{32.4}{75.6}$ from a reading of $\cuhinterval{68.4}{111.6}$ if the quantity being measured is somewhere on that line (that is, if the quantity is somewhere between $68.4$ and $75.6$ degrees).  The accuracy of measuring instruments is discussed in greater detail in Section~\ref{s:accuracy}.

\section{Non-Discrete Continuous Planetary Motion} \label{s:continuous}

Many commonly-studied computable physical models are discrete, but non-discrete continuous models are also possible.  For example, a non-discrete continuous computable physical model of planetary motion is the following.
\begin{cuhmodel}[Non-Discrete Continuous Planetary Motion] \label{m:continuous}
Let $S$ be the set of all pairs $\bigl\langle\cuhinterval{r}{s},\linebreak[0]\cuhinterval{p}{q}\bigr\rangle$ such that
\begin{align*}
r&=\frac{i}{10^n}-\frac{1}{10^{n+1}} & p&=360\bigl(r-\lfloor r\rfloor\bigr)\\
s&=\frac{i+1}{10^n}+\frac{1}{10^{n+1}} & q&=360\bigl(s-\lfloor s\rfloor\bigr)
\end{align*}
for some integer $i$ and some positive integer $n$.  The angular position of the Earth, represented as a range of angles measured in degrees, is given by the function $\alpha\bigl\langle\cuhinterval{r}{s},\cuhinterval{p}{q}\bigr\rangle=\cuhinterval{p}{q}$.  The time interval, measured in years, is given by the function $\tau\bigl\langle\cuhinterval{r}{s},\cuhinterval{p}{q}\bigr\rangle=\cuhinterval{r}{s}$.
\end{cuhmodel}

Like the discrete model, angular position and time are measured in intervals, but in this case the intervals are not all the same length.  In particular, there are arbitrarily small intervals for the observable quantities of position and time, meaning that these quantities may be measured to arbitrary precision.  This feature of Model~\ref{m:continuous} allows us to speak about real-valued positions and times, despite the fact that the values of observable quantities in the model are all non-negative integers, not real numbers.

This is because a real number is not the result of a single measurement, but is instead the limit of a potentially-infinite sequence of measurements.  Suppose, for example, that we wish to measure the circumference of a circle whose diameter is exactly one meter.  Measured with unmarked metersticks, we measure the circumference to be $3$ meters.  If the sticks are marked with millimeters, then we measure the circumference to be about $3.141$ meters.  And if they are marked with micrometers, then we measure a circumference of about $3.141592$ meters.  If we continue this process indefinitely with increasingly precise measuring instruments, then in the infinite limit we approach the real number $\pi$.

More formally, for each real number $x$ there is an infinite sequence of nested intervals $\cuhinterval{a_0}{b_0}$, $\cuhinterval{a_1}{b_1}$, $\cuhinterval{a_2}{b_2}$,~$\ldots$ that converges to $x$.  Given such a sequence, the function $\phi$ such that $\phi(n)=\cuhinterval{a_n}{b_n}$ for each non-negative integer $n$ is said to be an \emph{oracle}\index{oracles for real numbers} for $x$.  Note that there is more than one distinct sequence of nested intervals converging to $x$, and therefore more than one oracle for each $x$.  Of particular importance is the \emph{standard\index{oracles for real numbers!standard decimal} decimal oracle} $\cuhomicron_x$ for the real number $x$.  By definition, $\cuhomicron_x(n)=\cuhinterval{a_n}{b_n}$, where
\begin{align*}
a_n&=\frac{\bigl\lfloor 10^{n+1}x\bigr\rfloor}{10^{n+1}}-\frac{c}{10^{n+1}} & b_n&=\frac{\bigl\lfloor 10^{n+1}x\bigr\rfloor+1}{10^{n+1}}+\frac{c}{10^{n+1}}
\end{align*}
for each non-negative integer $n$, and where the \emph{accuracy\index{accuracy factors} factor} $c$ is a positive rational number constant.  We say that $x$ is a \emph{recursive real number} if and only if $\cuhomicron_x$ is a recursive function.  Note that not all real numbers are recursive~\cite{aT36}.

Now, returning to Model~\ref{m:continuous}, suppose that we are asked to find the position of the Earth at some real-valued time $t$.  Suppose further that we are given the oracle $\cuhomicron_t$ with accuracy factor $c=\frac{1}{10}$.  Note that as we increase $n$, the values $\cuhomicron_t(n)$ are increasingly precise measurements of the time $t$ in Model~\ref{m:continuous}.  Therefore, for each $n$ there is some state $\bigl\langle \cuhomicron_t(n),\cuhinterval{p_n}{q_n}\bigr\rangle$ in the set $S$ of Model~\ref{m:continuous}.  Because $S$ is a recursive set, and because there is exactly one state corresponding to each time measurement, the function $\epsilon$ such that $\epsilon(n)=\cuhinterval{p_n}{q_n}$ is a recursive function relative to the oracle $\cuhomicron_t$.  In fact, if $\cuhomicron_t(n)=\cuhinterval{r_n}{s_n}$, then
\begin{equation*}
\epsilon(n)=\cuhBinterval{360\bigl(r_n-\lfloor r_n\rfloor\bigr)}{360\bigl(s_n-\lfloor s_n\rfloor\bigr)}
\end{equation*}
for each non-negative integer $n$.  And since the sequence of intervals $\cuhinterval{r_0}{s_0}$, $\cuhinterval{r_1}{s_1}$, $\cuhinterval{r_2}{s_2}$,~$\ldots$ converges to $t$, it immediately follows that the sequence of intervals $\epsilon(0)$, $\epsilon(1)$, $\epsilon(2)$,~$\ldots$ converges to $a=360\bigl(t-\lfloor t\rfloor\bigr)$ whenever $t$ is not an integer.  In other words, $\epsilon$ is an oracle for the angular position $a$.

But in the case that $t$ is an integer,
\begin{equation*}
\epsilon(n)=\cuhBinterval{360-\frac{36}{10^{n+1}}}{\frac{396}{10^{n+1}}}
\end{equation*}
for all non-negative integers $n$, and $\cuhinterval{356.4}{39.6}$, $\cuhinterval{359.64}{3.96}$, $\cuhinterval{359.964}{0.396}$,~$\ldots$ is the resulting sequence.  In the standard topology of the real numbers an interval $\cuhinterval{x}{y}$ should have $x<y$, so the question of whether or not this sequence converges to a point $a$ in that standard topology cannot be meaningfully answered.  But if we are willing to abandon the standard topology of the real numbers, then we may conventionally define this sequence to converge to $a=0$. In fact, this definition is tantamount to establishing the topology of a circle of circumference 360 for all angles $a$.\footnote{\label{n:angle}A basis for this topology is represented by the set of all possible angle measurements.  In particular, if $x<y$ then $\cuhinterval{x}{y}$ represents the set of all real numbers $a$ such that $x<a<y$, and if $x>y$ then $\cuhinterval{x}{y}$ represents the set of all real numbers $a$ such that $0\leq a<y$ or $x<a<360$.}
Of course, this definition is justified since the readings after $360$ on a measuring instrument for angles are identified with those readings after $0$.  In other words, angles really do lie in a circle.

So, given the oracle $\cuhomicron_t$ for a real-valued time $t$, Model~\ref{m:continuous} allows us to compute an oracle $\epsilon$ for the angular position $a$ of the Earth at that time.  These predictions are in complete agreement with the predictions of the simple model of planetary motion (Model~\ref{m:real}).  In fact, imposing the appropriate topology on the space of angles $a$, the mapping from $t$ to $a$ in Model~\ref{m:continuous} is continuous.  The same mapping is discontinuous in the standard topology of the real numbers, which leads Kreisel's criterion to fail for Model~\ref{m:real}.  The formulation of computable physical models on effective topological spaces is discussed in greater detail in Sections~\ref{s:oracles} through \ref{s:kreisel}.

\section{Coarse-Graining} \label{s:coarse}

Observable quantities in computable physical models are defined operationally.  This means that each observable quantity is defined so as to correspond to a specific physical operation, such as the operation of comparing a length to the markings on a meterstick (where the meterstick itself is constructed according to a prescribed operation).  This is problematic for the non-discrete continuous model of planetary motion (Model~\ref{m:continuous}) because, for example, arbitrary precision angle measurements are made with a single observable quantity in the model.  That is, to assert that a model such as Model~\ref{m:continuous} is faithful, one must assert that there exists an operation which is capable of measuring angles to arbitrary precision.  It is not known whether or not such an operation actually exists.  And although the point is somewhat moot, since Model~\ref{m:continuous} is clearly not faithful, it raises the question of whether this is an accidental feature of Model~\ref{m:continuous}, or whether it is a feature common to all non-discrete continuous computable physical models.

A more practical alternative to Model~\ref{m:continuous} might introduce an infinite sequence of observable quantities $\alpha_1$, $\alpha_2$, $\alpha_3$,~$\ldots$~, each with finitely many digits of precision, and each more precise than its predecessor in the sequence.  In this case, given a state $s$, the values $\alpha_1(s)$, $\alpha_2(s)$, $\alpha_3(s)$,~$\ldots$ would form a sequence of intervals converging to a real number representing the angular position of the Earth in that state.  But a computable physical model has only countably many states, and there are uncountably many real numbers in the interval $\cuhinterval{0}{360}$.  Therefore, there must be some real number position in the interval $\cuhinterval{0}{360}$ that the Earth never attains.\footnote{In particular, this is a real number constructed by diagonalizing over those real numbers which are associated with each of the countably many states.}
That is, a computable physical model of this alternative form is not continuous in the intended topology.

Rather than considering arbitrary precision measurements, let us introduce just one additional level of precision into the discrete model of planetary motion (Model
\ref{m:discrete}).
\begin{cuhmodel} \label{m:high}
Let $S$ be the set of all quadruples $\bigl\langle\cuhinterval{r_1}{s_1},\cuhinterval{r_2}{s_2},\cuhinterval{p_1}{q_1},\cuhinterval{p_2}{q_2}\bigr\rangle$ such that
\begin{align*}
r_1&=\frac{i}{10}-\frac{1}{10^2} & p_1&=360\bigl(r_1-\lfloor r_1\rfloor\bigr)\\
s_1&=\frac{i+1}{10}+\frac{1}{10^2} & q_1&=360\bigl(s_1-\lfloor s_1\rfloor\bigr)\\
r_2&=\frac{j}{10^2}-\frac{1}{10^3} & p_2&=360\bigl(r_2-\lfloor r_2\rfloor\bigr)\\
s_2&=\frac{j+1}{10^2}+\frac{1}{10^3} & q_2&=360\bigl(s_2-\lfloor s_2\rfloor\bigr)
\end{align*}
for some integers $i$ and $j$ with $10i\leq j\leq 10i+9$.  The angular position of the Earth, represented as a range of angles measured in degrees with a low-precision measuring instrument, is given by the function
\begin{equation*}
\alpha_1\bigl\langle\cuhinterval{r_1}{s_1},\cuhinterval{r_2}{s_2},\cuhinterval{p_1}{q_1},\cuhinterval{p_2}{q_2}\bigr\rangle=\cuhinterval{p_1}{q_1}
\end{equation*}
The angular position of the Earth, represented as a range of angles measured in degrees with a high-precision measuring instrument, is given by the function
\begin{equation*}
\alpha_2\bigl\langle\cuhinterval{r_1}{s_1},\cuhinterval{r_2}{s_2},\cuhinterval{p_1}{q_1},\cuhinterval{p_2}{q_2}\bigr\rangle=\cuhinterval{p_2}{q_2}
\end{equation*}
The time interval, measured in years by a low-precision measuring instrument, is given by the function
\begin{equation*} \tau_1\bigl\langle\cuhinterval{r_1}{s_1},\cuhinterval{r_2}{s_2},\cuhinterval{p_1}{q_1},\cuhinterval{p_2}{q_2}\bigr\rangle=\cuhinterval{r_1}{s_1}
\end{equation*}
The time interval, measured in years by a high-precision measuring instrument, is given by the function
\begin{equation*} \tau_2\bigl\langle\cuhinterval{r_1}{s_1},\cuhinterval{r_2}{s_2},\cuhinterval{p_1}{q_1},\cuhinterval{p_2}{q_2}\bigr\rangle=\cuhinterval{r_2}{s_2}
\end{equation*}
\end{cuhmodel}

Note that if the high-precision observable quantities $\alpha_2$ and $\tau_2$ are ignored, then the predictions of Model~\ref{m:high} agree exactly with the predictions of Model~\ref{m:discrete}.\footnote{But it should be noted that the model obtained by omitting $\alpha_2$ and $\tau_2$ from Model~\ref{m:high} is not identical to Model~\ref{m:discrete}.  In particular, for each state in Model~\ref{m:discrete}, there are ten indistinguishable states in the model obtained by omitting $\alpha_2$ and $\tau_2$ from Model~\ref{m:high}.  That is, these models are not isomorphic.  See Section~\ref{s:theorems}.}
The process of removing observable quantities from a model to obtain a new model with fewer observable quantities is called \emph{coarse-graining}\index{coarse-graining}.  But while Model~\ref{m:discrete} is faithful, Model~\ref{m:high} is not faithful---physical measurements do not agree with the values of the observable quantities $\alpha_2$ and $\tau_2$ because the orbit of the Earth is not a perfect circle.

A traditional conception of science regards all physical models as inexact approximations of reality, and holds that the goal of science is to produce progressively more accurate models whose predictions more closely match observations than the predictions of previous models.  That conception of science is reasonable when the values of observable quantities are real numbers, since the real numbers predicted by physical models are never exactly the same as the real numbers `measured' in the laboratory.  But when non-negative integers are used for the values of observable quantities, then an alternate conception of science is possible.

In this alternate conception there exist faithful models that are in exact agreement with reality, but perhaps only for a small subset of all physically observable quantities.  For example, Model~\ref{m:discrete} is faithful, but only predicts the angular position of the Earth to within $43.2$ degrees, and only for a limited range of times.  The goal of science is then to produce \emph{more refined} models.  That is, the goal of science is to discover faithful models which have larger sets of observable quantities, and are therefore capable of predicting increasing numbers of facts.

\section{Radioactive Decay} \label{s:radio}

Given non-negative integers $x$ and $y$, let $\beta(x,y)$ be the length $y$ sequence composed of the first $y$ bits in the binary expansion of $x$.  For example $\beta(13,6)=\langle 0,0,1,1,0,1\rangle$.  Now suppose that a single atom of a radioactive isotope, such as nitrogen-13, is placed inside a detector at time $t=0$.  We say that the detector has status $1$ if it has detected the decay of the isotope, and has status $0$ otherwise.  The \emph{history} of the detector at time $t$ is the length $t$ sequence of bits corresponding to the status of the detector at times $1$ through $t$.  For example, if the isotope decays sometime between $t=2$ and $t=3$, then the history of the detector at time $t=5$ is $\langle 0,0,1,1,1\rangle$.  The following computable physical model models the status of the detector as a function of time.
\begin{cuhmodel}[Radioactive Decay]
Let $S$ be the set of all triples $\bigl\langle t,\beta(2^n-1,t),j\bigr\rangle$ where $n$, $t$, and $j$ are non-negative integers such that $n\leq t$, $t\ne0$, and $2j\leq 2^n-1$.  The history of the detector is given by the function $\eta\langle t,h,j\rangle=h$, and the time, measured in units of the half-life of the isotope, is given by the function $\tau\langle t,h,j\rangle=t$.
\end{cuhmodel}

This is a model of the many-worlds\index{quantum mechanics!many-worlds interpretation of} interpretation~\cite{DG73} of radioactive decay.  Suppose that one asks, ``What will the status of the detector be at time $t=2$?''  There are four states $\langle t,h,j\rangle$ such that $\tau\langle t,h,j\rangle=2$, namely
\begin{align*}
\bigl\langle 2&,\langle 0,0\rangle,0\bigr\rangle & \bigl\langle 2&,\langle 0,1\rangle,0\bigr\rangle & \bigl\langle 2&,\langle 1,1\rangle,0\bigr\rangle & \bigl\langle 2&,\langle 1,1\rangle,1\bigr\rangle
\end{align*}
In three of these states, the detector has status $1$, and in one state it has status $0$.  If we assume that each state of the system is equally likely, then there is a $\frac{3}{4}$ probability that the detector will have status $1$ at time $t=2$.  But if we ask, ``If the detector has status $1$ at time $t=1$, then what will its status be at time $t=2$?'' The answer is ``$1$'', since the detector has status $1$ at time $2$ in both states where the detector had status $1$ at time $1$.  These results are in agreement with conventional theory.

\section{Ensembles of Physical Models}

Suppose that a planet orbits a distant star and that we are uncertain of the planet's orbital period.  In particular, suppose that we believe its motion is faithfully described by either the discrete model of planetary motion (Model~\ref{m:discrete}) or by the following computable physical model.
\begin{cuhmodel} \label{m:slow}
Let $S$ be the set of all pairs $\bigl\langle\cuhinterval{r}{s},\cuhinterval{p}{q}\bigr\rangle$ such that
\begin{align*}
r&=\frac{i}{10}-\frac{1}{100} & p&=360\biggl(\frac{n}{10}-\frac{1}{100}-\Bigl\lfloor\frac{n}{10}-\frac{1}{100}\Bigr\rfloor\biggr)\\
s&=\frac{i+1}{10}+\frac{1}{100} & q&=360\biggl(\frac{n+1}{10}+\frac{1}{100}-\Bigl\lfloor\frac{n+1}{10}+\frac{1}{100}\Bigr\rfloor\biggr)
\end{align*}
for some integer $i$ between $-20000$ and $20000$, and such that $n=\lfloor i/4\rfloor$.  The angular position of the planet, represented as a range of angles measured in degrees, is given by the function $\alpha\bigl\langle\cuhinterval{r}{s},\cuhinterval{p}{q}\bigr\rangle=\cuhinterval{p}{q}$.  The time interval, measured in Earth years, is given by the function $\tau\bigl\langle\cuhinterval{r}{s},\cuhinterval{p}{q}\bigr\rangle=\cuhinterval{r}{s}$.
\end{cuhmodel}
\noindent
Note that this model is similar to Model~\ref{m:discrete}, except that the orbital period of the planet is $4$ Earth years, rather than $1$ Earth year.

If for each of the two models we are given a rational number expressing the probability that that model is faithful, then a \emph{statistical\index{ensembles!statistical} ensemble} of the models may be constructed.  For example, if Model~\ref{m:discrete} is twice as likely as Model~\ref{m:slow}, then a corresponding statistical ensemble is the following.  Note that this statistical ensemble is itself a computable physical model.
\begin{cuhmodel}[Ensemble of Models] \label{m:ensemble}
Let $S$ be the set of all triples $\bigl\langle\cuhinterval{r}{s},\cuhinterval{p}{q},j\bigr\rangle$ such that
\begin{align*}
r&=\frac{i}{10}-\frac{1}{100} & p&=360\biggl(\frac{n}{10}-\frac{1}{100}-\Bigl\lfloor\frac{n}{10}-\frac{1}{100}\Bigr\rfloor\biggr)\\
s&=\frac{i+1}{10}+\frac{1}{100} & q&=360\biggl(\frac{n+1}{10}+\frac{1}{100}-\Bigl\lfloor\frac{n+1}{10}+\frac{1}{100}\Bigr\rfloor\biggr)
\end{align*}
for some integer $i$ between $-20000$ and $20000$, where $j=0$, $1$, or $2$, and where
\begin{equation*}
n=
\begin{cases}
i &\text{if $j=0$ or $1$}\\
\lfloor i/4\rfloor &\text{if $j=2$}
\end{cases}
\end{equation*}
The angular position of the planet, represented as a range of angles measured in degrees, is given by the function $\alpha\bigl\langle\cuhinterval{r}{s},\cuhinterval{p}{q},j\bigr\rangle=\cuhinterval{p}{q}$.  The time interval, measured in Earth years, is given by the function $\tau\bigl\langle\cuhinterval{r}{s},\cuhinterval{p}{q},j\bigr\rangle=\cuhinterval{r}{s}$.
\end{cuhmodel}
\noindent
Since Model~\ref{m:discrete} is twice as likely as Model~\ref{m:slow}, there are two states, $\bigl\langle\cuhinterval{r}{s},\cuhinterval{p}{q},0\bigr\rangle$ and $\bigl\langle\cuhinterval{r}{s},\cuhinterval{p}{q},1\bigr\rangle$ in the ensemble for each state $\bigl\langle\cuhinterval{r}{s},\cuhinterval{p}{q}\bigr\rangle$ in Model~\ref{m:discrete}, and there is one state $\bigl\langle\cuhinterval{r}{s},\cuhinterval{p}{q},2\bigr\rangle$ in the ensemble for each state $\bigl\langle\cuhinterval{r}{s},\cuhinterval{p}{q}\bigr\rangle$ in Model~\ref{m:slow}.  Note that the index $j$ in each state $\bigl\langle\cuhinterval{r}{s},\cuhinterval{p}{q},j\bigr\rangle$ of the ensemble is not observable.

Now, if we ask for the position of the planet during the time interval $\cuhinterval{0.29}{0.41}$, for example, there are three possible states $\bigl\langle\cuhinterval{r}{s},\cuhinterval{p}{q},j\bigr\rangle$ in the ensemble such that
\begin{equation*}
\tau\bigl\langle\cuhinterval{r}{s},\cuhinterval{p}{q},j\bigr\rangle=\cuhinterval{0.29}{0.41}
\end{equation*}
namely
\begin{align*}
&\bigl\langle\cuhinterval{0.29}{0.41},\cuhinterval{104.4}{147.6},0\bigr\rangle\\
&\bigl\langle\cuhinterval{0.29}{0.41},\cuhinterval{104.4}{147.6},1\bigr\rangle\\ &\bigl\langle\cuhinterval{0.29}{0.41},\cuhinterval{356.4}{39.6},2\bigr\rangle
\end{align*}
Since the planet's angular position is $\cuhinterval{104.4}{147.6}$ for two of these three states, the position measurement $\cuhinterval{104.4}{147.6}$ has a probability of $\frac{2}{3}$.  Similarly, because the planet's angular position is $\cuhinterval{356.4}{39.6}$ for one of the three states, the position measurement $\cuhinterval{356.4}{39.6}$ has a probability of $\frac{1}{3}$.  These probabilities are a direct reflection of our uncertainty about which of the two underlying physical models, Model~\ref{m:discrete} or Model~\ref{m:slow}, is the true faithful model.  In particular, because Model~\ref{m:discrete} has been deemed twice as likely as Model~\ref{m:slow}, the position of the planet in Model~\ref{m:discrete}, namely $\cuhinterval{104.4}{147.6}$, has twice the probability of the position predicted by Model~\ref{m:slow}, namely $\cuhinterval{356.4}{39.6}$.

It is important to note that there is no observable quantity corresponding to probability in Model~\ref{m:ensemble}.  Instead, probability is a mathematical tool used to interpret the model's predictions.  This sort of interpretation of an ensemble of models is appropriate whenever the ensemble is composed from all possible models which could describe a particular system, with the number of copies of states of the individual models reflecting our confidence in the predictions of those models.  See reference~\cite{eJ57} for a more detailed account of this subjectivist\index{probability!subjectivist interpretation of} interpretation of probability in physics.

Ensembles may be constructed in other circumstances as well, and we may refer to such ensembles as \emph{non-statistical\index{ensembles!non-statistical} ensembles} of physical models.  Non-statistical ensembles of physical models are commonplace in the sciences.  For example, they result whenever a constant, such as an initial position, is left unspecified in the statement of a model.  That model can then be used to describe any member of a family of systems, each of which may have a different value for the constant.  But most importantly, when a non-statistical ensemble of physical models is constructed, no claims as to the likelihood of one value of the constant, as compared to some other value of the constant, are being made.  In fact, this is the defining characteristic of a non-statistical ensemble of models.  Non-statistical ensembles can be useful because they provide a convenient way to collect together sets of closely-related models.

\section{Incompatible Measurements}

A pair of measurements is said to be \emph{simultaneous} if and only if they are both performed while the system is in a single state.  An essential feature of quantum mechanical systems is that there may be quantities which are not simultaneously measurable.  For example, the measurement of one quantity, such as the position of a particle, might affect the subsequent measurement of another quantity, such as the particle's momentum.  Such measurements are said to be \emph{incompatible}.  It is natural to ask whether computable physical models can be used to describe systems which feature incompatible measurements.

Discrete quantum mechanical systems are often formalized as follows~\cite{pD30,jN32}.  The \emph{quantum mechanical state} of a system is a normalized vector $v$ in some normed complex vector space $V$.  Typically, $V$ is a Hilbert\index{Hilbert, David} space and $v$ is a \emph{wave function}.  For each \emph{quantum mechanical measurement} there is a corresponding set $B=\{v_1,v_2,v_3,\ldots\}$ of normalized basis vectors for $V$.  Each member of $B$ corresponds to a possible value of the measurement.  Because $B$ is a basis for $V$, $v=a_1v_1+a_2v_2+a_3v_3+\cdots$ for some complex numbers $a_1$, $a_2$, $a_3$,~$\ldots$.  If the system is in quantum mechanical state $v$ and no two members of $B$ correspond to the same measurement value,\footnote{Alternatively, if $v_{n_1}$, $v_{n_2}$, $v_{n_3}$,~$\ldots$ are distinct basis vectors corresponding to the same measurement value, then the probability of measuring the value is $\lvert a_{n_1}\rvert^2+\lvert a_{n_2}\rvert^2+\lvert a_{n_3}\rvert^2+\cdots$.  If that value is actually measured, then the state of the system immediately after the measurement is the normalization of $a_{n_1}v_{n_1}+a_{n_2}v_{n_2}+a_{n_3}v_{n_3}+\cdots$.  See reference~\cite{aM59}.}
then the probability that the measurement will have the value corresponding to $v_n$ is $\lvert a_n\rvert^2$.  In this case, if the actual value which is measured is the value corresponding to $v_n$, then the quantum mechanical state of the system immediately after that measurement is $v_n$.  The state $v$ is said to have \emph{collapsed} to $v_n$.  During the time between measurements, the quantum mechanical state of a system may evolve according to a rule such as Schr\"odinger's\index{Schr\"odinger, Erwin} equation.

Consider, for example, the problem of measuring the components of the spin of an isolated electron.  In this case, $V$ is the set of all vectors $(a,b)$ such that $a$ and $b$ are complex numbers, where the norm $\lVert(a,b)\rVert$ is defined to be $\sqrt{\lvert a\rvert^2+\lvert b\rvert^2}$.  A quantum mechanical measurement of the $z$ component of the electron's spin has two possible values, $-\frac{1}{2}\hbar$ and $+\frac{1}{2}\hbar$.  The basis vectors corresponding to these values are $(0,1)$ and $(1,0)$, respectively.  The quantum mechanical measurement of another component of the electron's spin, lying in the $xz$ plane at an angle of $60$ degrees to the $z$ axis, also has two possible values, $-\frac{1}{2}\hbar$ and $+\frac{1}{2}\hbar$.  The basis vectors corresponding to these values are $\bigl(-\frac{1}{2},\frac{\sqrt{3}}{2}\bigr)$ and $\bigl(\frac{\sqrt{3}}{2},\frac{1}{2}\bigr)$, respectively.  So, for example, if the spin component in the $z$ direction is measured to have a value of $+\frac{1}{2}\hbar$ at time $t=0$, then since
\begin{equation*} (1,0)=-\frac{1}{2}\Bigl(-\frac{1}{2},\frac{\sqrt{3}}{2}\Bigr)+\frac{\sqrt{3}}{2}\Bigl(\frac{\sqrt{3}}{2},\frac{1}{2}\Bigr)
\end{equation*}
there is a $\big\lvert\frac{\sqrt{3}}{2}\big\rvert^2=\frac{3}{4}$ probability that if the $60$-degree electron spin component is measured at time $t=1$, then that component will also have a value of $+\frac{1}{2}\hbar$.

Supposing that the $60$-degree electron spin component is measured to have a value of $+\frac{1}{2}\hbar$ at time $t=1$, a similar line of reasoning implies that if the spin's $z$ component is measured at time $t=2$, then there is a $\big\lvert\frac{1}{2}\big\rvert^2=\frac{1}{4}$ probability that the value of that measurement will be $-\frac{1}{2}\hbar$, since
\begin{equation*} \Bigl(\frac{\sqrt{3}}{2},\frac{1}{2}\Bigr)=\frac{1}{2}(0,1)+\frac{\sqrt{3}}{2}(1,0)
\end{equation*}
Therefore, if the $z$ component of the electron's spin is measured at time $t=0$, followed by a measurement of the $60$-degree spin component at time $t=1$, and followed by another measurement of the $z$ component at time $t=2$, then the values of the two measurements of the $z$ component need not be the same.  Indeed, the quantum mechanical state of the system does not change\footnote{In this case, the quantum mechanical state of the system does not change between measurements because the electron is isolated.  For example, the electron is free from external electromagnetic fields or other influences that might cause its spin to precess.}
between times $t=0$ and $t=1$, or between times $t=1$ and $t=2$, but the measurement of the $60$-degree spin component at time $t=1$ disturbs the system and can potentially change the value of any subsequent measurement of the $z$ component.  That is, measurement of the electron's $60$-degree spin component is incompatible with measurement of its $z$ component.

Let us formalize this system as a computable physical model.  The system is composed of the electron, the apparatus used to make the quantum mechanical measurements, and the researcher who chooses which components to measure.\footnote{We refrain from asking questions about the probability with which the researcher chooses which components to measure.  That is, this model describes a non-statistical ensemble of researchers.}
We assume that the quantum mechanical state of the electron is $(1,0)$ at time $t=0$, and that the researcher makes subsequent quantum mechanical measurements of the electron's spin components at times $t=1$ and $t=2$.  When a quantum mechanical measurement is performed, a record is made (perhaps in the researcher's notebook) of the value of this measurement and of the component that was measured.  We construct the computable physical model of this system from the point of view of an agent who observes only this recorded history and the time.
\begin{cuhmodel}[Electron Spin Measurement] \label{m:spin}
Let $S$ be the set of all triples $\langle t,h,j\rangle$ such that
\begin{align*}
&\begin{aligned}
t&=1\\
h&=\cuhinterval{0}{+1}\\
j&=0
\end{aligned} & &\text{or} &
&\begin{aligned}
t&=1\\
h&=\cuhinterval{60}{-1}\\
j&=1
\end{aligned} & &\text{or} &
&\begin{aligned}
t&=1\\
h&=\cuhinterval{60}{+1}\\
j&=2+m
\end{aligned}
\end{align*}
or
\begin{align*}
&\begin{aligned}
t&=2\\
h&=\bigl\langle\cuhinterval{0}{+1},\cuhinterval{0}{+1}\bigr\rangle\\
j&=5
\end{aligned} & &\text{or} &
&\begin{aligned}
t&=2\\
h&=\bigl\langle\cuhinterval{0}{+1},\cuhinterval{60}{-1}\bigr\rangle\\
j&=6
\end{aligned}\\
\intertext{or}\\
&\begin{aligned}
t&=2\\
h&=\bigl\langle\cuhinterval{0}{+1},\cuhinterval{60}{+1}\bigr\rangle\\
j&=7+m
\end{aligned} & &\text{or} &
&\begin{aligned}
t&=2\\
h&=\bigl\langle\cuhinterval{60}{-1},\cuhinterval{0}{-1}\bigr\rangle\\
j&=10
\end{aligned}\\
\intertext{or}\\
&\begin{aligned}
t&=2\\
h&=\bigl\langle\cuhinterval{60}{-1},\cuhinterval{0}{+1}\bigr\rangle\\
j&=11+m
\end{aligned} & &\text{or} &
&\begin{aligned}
t&=2\\
h&=\bigl\langle\cuhinterval{60}{-1},\cuhinterval{60}{-1}\bigr\rangle\\
j&=14
\end{aligned}\\
\intertext{or}\\
&\begin{aligned}
t&=2\\
h&=\bigl\langle\cuhinterval{60}{+1},\cuhinterval{0}{-1}\bigr\rangle\\
j&=15+n
\end{aligned} & &\text{or} &
&\begin{aligned}
t&=2\\
h&=\bigl\langle\cuhinterval{60}{+1},\cuhinterval{0}{+1}\bigr\rangle\\
j&=24+m
\end{aligned}
\end{align*}
or
\begin{align*}
t&=2\\
h&=\bigl\langle\cuhinterval{60}{+1},\cuhinterval{60}{+1}\bigr\rangle\\
j&=27+m
\end{align*}
for some integers $m$ and $n$ with $0\leq m\leq 2$ and $0\leq n\leq 8$.  The time is given by the function $\tau\langle t,h,j\rangle=t$.  The history is given by the function $\eta\langle t,h,j\rangle=h$.  A history is a chronological sequence of records, with the leftmost record being the oldest.  Each record is a pair $\cuhinterval{a}{b}$ of rational numbers, where $a$ is the angle from the $z$ axis, measured in degrees, of a component of the electron's spin, and where $b$ is the value of that component, measured in units of $\frac{1}{2}\hbar$.
\end{cuhmodel}
\noindent
Note that each state $\langle t,h,j\rangle$ has a distinct index $j$, which we will use to identify that particular state.

Model~\ref{m:spin} corresponds to the quantum mechanical system in the following sense.  First, the quantum mechanical state of the system at time $t$ corresponds to a set of states in the computable physical model.  For example, if the researcher decides to measure the $60$-degree component of the electron's spin at time $t=1$, then the quantum mechanical state of the system is represented by the set of states with indices $1$ through $4$.  Assuming that the states in the set are equally likely, there is a $\frac{3}{4}$ probability that this component will have a measured value of $+\frac{1}{2}\hbar$, for example.  Immediately after the measurement is made, the quantum mechanical state collapses, becoming either the set of states with indices $2$ through $4$, or the singleton set containing only the state with index $1$.  The collapse occurs because the information provided by the quantum mechanical measurement allows us to identify the state of the system more precisely, eliminating those states which disagree with the measurement result.\footnote{For a more detailed discussion of this ensemble\index{quantum mechanics!ensemble interpretation of} interpretation of the collapse of a quantum mechanical state, see reference~\cite{dB51}.}
The quantum mechanical state then evolves to a new set of states at time $t=2$.  For example, if the measured value of the $60$-degree electron spin component is $+\frac{1}{2}\hbar$ at time $t=1$, and if the researcher plans to measure the $0$-degree electron spin component (that is, the $z$ component) at time $t=2$, then the quantum mechanical state immediately before that measurement at time $t=2$ is the set of states with indices $15$ through $26$.

Computable physical models similar to Model~\ref{m:spin} can be constructed for quantum mechanical systems which satisfy the following criteria.
\begin{enumerate}
\item There is a set of possible measurements $\{m_0,m_1,m_2,\ldots,m_i,\ldots\}$ indexed by non-negative integers $i$.
\item Every discrete time step, one measurement from this set is performed.
\item The possible values of each measurement $m_i$ are identified with non-negative integers.
\item If $\phi(i,n,t,h)$ is the probability that the measurement with index $i$ has the value $n$, given that the measurement is performed at time step $t$ and that $h=\bigl\langle\cuhinterval{i_1}{n_1},\ldots,\cuhinterval{i_{t-1}}{n_{t-1}}\bigr\rangle$  is the history of past measurements and their values, then $\phi(i,n,t,h)$ is a rational number.
\item If there is no measurement with index $i$ or if the non-negative integer $n$ does not correspond to a value of the measurement with index $i$, then $\phi(i,n,t,h)=0$.
\item For each choice of non-negative integers $i$, $t$, and $h$, there are only finitely many non-negative integers $n$ such that $\phi(i,n,t,h)>0$.
\item $\phi$ is a recursive function.
\end{enumerate}
If a quantum mechanical system satisfies these criteria, then we can determine whether or not
\begin{equation*}
s=\bigl\langle t,\bigl\langle\cuhinterval{i_1}{n_1},\cuhinterval{i_2}{n_2},\ldots,\cuhinterval{i_t}{n_t}\bigr\rangle,j\bigr\rangle
\end{equation*}
is in the set $S$ of states of the corresponding computable physical model as follows.  First, if $t=0$, then $s$ is not in $S$.  Next, let $h_1=0$ and for each positive integer $k$ with $1<k\leq t$, let
\begin{equation*}
h_k=\bigl\langle\cuhinterval{i_1}{n_1},\cuhinterval{i_2}{n_2},\ldots,\cuhinterval{i_{k-1}}{n_{k-1}}\bigr\rangle
\end{equation*}
Now we perform the following calculations for each positive integer $k\leq t$.  If $\phi(i_k,n_k,k,h_k)=0$, then $s$ is not in $S$.  Otherwise, there must be finitely many non-negative integers $n$ such that the probability $\phi(i_k,n,k,h_k)$ is greater than zero.  Since probabilities must sum to $1$, those values for $n$ may be found exhaustively by calculating $\phi(i_k,0,k,h_k)$, $\phi(i_k,1,k,h_k)$, $\phi(i_k,2,k,h_k)$, and so on, until the the sum of these probabilities reaches $1$.  Let $d_k$ be the least common denominator of these rational probabilities, and let $a_k$ be the unique positive integer such that
\begin{equation*}
\phi(i_k,n_k,k,h_k)=\frac{a_k}{d_k}
\end{equation*}
If $j<a_1a_2\cdots a_t$, then $s$ is in $S$.  Otherwise, $s$ is not in $S$.

\section{The Accuracy of Measuring Instruments} \label{s:accuracy}

An important feature of the discrete model of planetary motion (Model~\ref{m:discrete}) is that the intervals representing time and angle measurements overlap.  The amount of overlap between adjacent intervals is determined by the accuracy of the corresponding measuring instrument.  The introduction of overlapping intervals is motivated by an argument such as the following.

If Model~\ref{m:discrete} were constructed using disjoint, non-overlapping intervals, then the states $\bigl\langle\cuhinterval{r}{s},\cuhinterval{p}{q}\bigr\rangle$ of that model would be given by
\begin{align*}
r&=i/10 & p&=360\bigl(r-\lfloor r\rfloor\bigr)\\
s&=(i+1)/10 & q&=360\bigl(s-\lfloor s\rfloor\bigr)
\end{align*}
where $i$ is an integer.  In particular, $\bigl\langle\cuhinterval{0.2}{0.3},\cuhinterval{72}{108}\bigr\rangle$ and $\bigl\langle\cuhinterval{0.3}{0.4},\cuhinterval{108}{144}\bigr\rangle$ would be two such states, with $\cuhinterval{r}{s}$ representing the state's time interval, measured in years, and with $\cuhinterval{p}{q}$ representing the corresponding interval of angular positions for the Earth, measured in degrees.  According to this model, if the position of the Earth is measured at time $t=0.298$ years, then $t$ is within the interval $\cuhinterval{0.2}{0.3}$, and the state of the system is $\bigl\langle\cuhinterval{0.2}{0.3},\cuhinterval{72}{108}\bigr\rangle$.  Therefore, according to this model, the position of the Earth should be between $72$ and $108$ degrees.  Indeed, the simple model of planetary motion (Model~\ref{m:real}) predicts that the angular position of the Earth at time $t=0.298$ years should be $360\bigl(0.298-\lfloor 0.298\rfloor\bigr)\approx 107$ degrees.  But the true position of the Earth in its orbit deviates from Model~\ref{m:real}.  In this case, the true position of the Earth at time $t=0.298$ years is about $109$ degrees,\footnote{This is assuming that time is measured in anomalistic years, with each year beginning at perihelion passage.  During the course of a year, the position of the Earth is the true anomaly, measured relative to that perihelion passage.}
which is outside the interval $\cuhinterval{72}{108}$.  Therefore, if the discrete model were constructed using disjoint, non-overlapping intervals, then the model would fail when $t=0.298$ years.

But the discrete model of planetary motion (Model~\ref{m:discrete}) was constructed using overlapping intervals.  In particular,
\begin{align*}
\bigl\langle\cuhinterval{0.19}{0.31}&,\cuhinterval{68.4}{111.6}\bigr\rangle & \bigl\langle\cuhinterval{0.29}{0.41}&,\cuhinterval{104.4}{147.6}\bigr\rangle
\end{align*}
are two states in Model~\ref{m:discrete}.  Note that at time $t=0.298$ years, Model~\ref{m:discrete} could be in either of these two states.  Furthermore, any pair of real-valued time $t$ and angle $a$ measurements which satisfy
\begin{equation*}
\big\lvert a-360\bigl(t-\lfloor t\rfloor\bigr)\big\rvert<7.2
\end{equation*}
fall within the time and angle intervals of some common state of Model~\ref{m:discrete}.  Since $\lvert a-360\bigl(t-\lfloor t\rfloor\bigr)\rvert$ is at most $2$ degrees~\cite{jE98} for all physically observed angles $a$ measured at times $t$, Model~\ref{m:discrete} is faithful.

It is important to point out, though, that Model~\ref{m:discrete} is faithful only if the results of measurements are uncertain when they occur within the region of overlap.  For example, at time $t=0.306$ years, two results of a time measurement are possible, $\cuhinterval{0.19}{0.31}$ and $\cuhinterval{0.29}{0.41}$, and an observer cannot be certain which of these intervals is the value of the measurement.  The actual angular position of the Earth at time $t=0.306$ years is about $112$ degrees, so $\cuhinterval{104.4}{147.6}$ is the only possible result of a position measurement.  Since
\begin{equation*}
\bigl\langle\cuhinterval{0.19}{0.31},\cuhinterval{104.4}{147.6}\bigr\rangle
\end{equation*}
is not one of the states of Model~\ref{m:discrete}, the observer is expected to realize, in retrospect, after measuring the angular position, that the true time measurement must have been $\cuhinterval{0.29}{0.41}$.  After providing a model for the phenomenon of accuracy, we will be able to reformulate Model~\ref{m:discrete} so that the results of measurements no longer possess this sort of ambiguity.

But first, note that the accuracy of a measuring instrument, by definition, can only be quantified relative to some other, more precise quantity.  For example, the argument above, concerning accuracy in Model~\ref{m:discrete}, makes frequent reference to exact real-valued angles and times.  Indeed, even when we express an angle measurement as an interval, such as $\cuhinterval{68.4}{111.6}$, we are implying that it is possible to distinguish an angle of $68.4$ degrees from an angle of $111.6$ degrees, and that other angles lie between those two values.  In principle, though, it is possible to describe the accuracy of a measuring instrument in a purely discrete manner, without any mention of real numbers.  For example, let us consider an instrument for measuring distances in meters, with the value of a measurement represented as an integer number of meters.  The accuracy of this measuring instrument can be quantified relative to a second instrument which measures distances in decimeters.

Presumably, the phenomenon of accuracy results from our inability to properly calibrate measuring instruments.  Although there are many different underlying causes of calibration error, it suffices to consider only one such cause for a simple model of this phenomenon.  We will suppose that when we measure a distance in meters, that we have difficulty aligning the measuring instrument with the origin, so that sometimes the instrument is aligned a decimeter too far in the negative direction, and at other times a decimeter too far in the positive direction.  Hence, there are two different physical models for the measurement.  In one model the instrument is misaligned in the negative direction, and in the other model it is misaligned in the positive direction.  Since we do not know which of these two models describes any one particular measurement, it is appropriate to combine them in the following statistical ensemble.
\begin{cuhmodel} \label{m:decimeters}
Let $S$ be the set of all triples $\bigl\langle\zeta(m),\zeta(d),\zeta(i)\bigr\rangle$ such that
\begin{equation*}
m=\Bigl\lfloor\frac{d+i}{10}\Bigr\rfloor
\end{equation*}
where $d$ is an integer, and where $i=-1$ or $+1$.  The distance, measured in meters, is given by the function
\begin{equation*}
\mu\bigl\langle\zeta(m),\zeta(d),\zeta(i)\bigr\rangle=\zeta(m)
\end{equation*}
The same distance, measured in decimeters, is given by the function
\begin{equation*}
\delta\bigl\langle\zeta(m),\zeta(d),\zeta(i)\bigr\rangle=\zeta(d)
\end{equation*}
\end{cuhmodel}
\noindent
Note that the function $\zeta$ was defined in Section~\ref{s:cuh}.  Also note that the index $i$ in each state $\bigl\langle\zeta(m),\zeta(d),\zeta(i)\bigr\rangle$ represents the calibration error, which is either $-1$ decimeter or $+1$ decimeter.  Model~\ref{m:decimeters} is a computable physical model.

A measurement of $d$ decimeters in Model~\ref{m:decimeters} can be interpreted as corresponding to an interval of $\cuhbinterval{\frac{d}{10}}{\frac{d+1}{10}}$ meters.  Note that a measurement of $9$ decimeters (corresponding to an interval of $\cuhinterval{0.9}{1.0}$ meters), for example, is possible in two distinct states of the model:
\begin{align*}
\bigl\langle\zeta(0),\zeta(9)&,\zeta(-1)\bigr\rangle & \bigl\langle\zeta(1),\zeta(9)&,\zeta(+1)\bigr\rangle
\end{align*}
Similarly, a measurement of $10$ decimeters (corresponding to an interval of $\cuhinterval{1.0}{1.1}$ meters) is possible in the states
\begin{align*}
\bigl\langle\zeta(0),\zeta(10)&,\zeta(-1)\bigr\rangle & \bigl\langle\zeta(1),\zeta(10)&,\zeta(+1)\bigr\rangle
\end{align*}
Hence, a measurement of $0$ meters overlaps with a measurement of $1$ meter on the intervals $\cuhinterval{0.9}{1.0}$ and $\cuhinterval{1.0}{1.1}$.  And in general, a measurement of $m$ meters overlaps with a measurement of $m+1$ meters on the intervals $\cuhinterval{m+0.9}{m+1.0}$ and $\cuhinterval{m+1.0}{m+1.1}$.  Therefore, a measurement of $m$ meters in Model~\ref{m:decimeters} can be understood as corresponding to an interval of $\cuhinterval{m-0.1}{m+1.1}$ meters, with adjacent intervals overlapping by $0.2$ meters.

Of course, this interpretation of Model~\ref{m:decimeters} presumes that decimeters can be measured with perfect accuracy.  A more realistic computable physical model can be constructed by supposing that decimeter measurements can also be misaligned, for example, by $-1$ centimeter or $+1$ centimeter.  Note that centimeters are treated as unobserved, purely theoretical constructions in this model---there is no observable quantity for centimeter measurements.
\begin{cuhmodel}
Let $S$ be the set of all quintuples $\bigl\langle\zeta(m),\zeta(d),\zeta(c),\zeta(i),\zeta(j)\bigr\rangle$ such that
\begin{align*}
m&=\Bigl\lfloor\frac{c+10i}{100}\Bigr\rfloor & d&=\Bigl\lfloor\frac{c+j}{10}\Bigr\rfloor
\end{align*}
where $c$ is an integer, $i=-1$ or $+1$, and $j=-1$ or $+1$.  The distance, measured in meters, is given by the function
\begin{equation*}
\mu\bigl\langle\zeta(m),\zeta(d),\zeta(c),\zeta(i),\zeta(j)\bigr\rangle=\zeta(m)
\end{equation*}
The same distance, measured in decimeters, is given by the function
\begin{equation*}
\delta\bigl\langle\zeta(m),\zeta(d),\zeta(c),\zeta(i),\zeta(j)\bigr\rangle=\zeta(d)
\end{equation*}
\end{cuhmodel}

As an important application, the model of accuracy described in this section can be used to reformulate the discrete model of planetary motion (Model~\ref{m:discrete}).
\begin{cuhmodel} \label{m:discrete2}
Let $S$ be the set of all quintuples $\bigl\langle\cuhinterval{r}{s},\cuhinterval{p}{q},\zeta(i),\zeta(j),\zeta(k)\bigr\rangle$ such that
\begin{align*}
m&=\Bigl\lfloor\frac{k+i}{10}\Bigr\rfloor & n&=\Bigl\lfloor\frac{k+j}{10}\Bigr\rfloor\\
r&=\frac{m}{10}-\frac{1}{100} & p&=360\biggl(\frac{n}{10}-\frac{1}{100}-\Bigl\lfloor\frac{n}{10}-\frac{1}{100}\Bigr\rfloor\biggr)\\
s&=\frac{m+1}{10}+\frac{1}{100} & q&=360\biggl(\frac{n+1}{10}+\frac{1}{100}-\Bigl\lfloor\frac{n+1}{10}+\frac{1}{100}\Bigr\rfloor\biggr)
\end{align*}
for some integers $m$ and $n$, for some integer $k$ between $-200000$ and $200000$, and where $i=-1$ or $+1$, and $j=-1$ or $+1$.  The angular position of the Earth, represented as a range of angles measured in degrees, is given by the function
\begin{equation*}
\alpha\bigl\langle\cuhinterval{r}{s},\cuhinterval{p}{q},\zeta(i),\zeta(j),\zeta(k)\bigr\rangle=\cuhinterval{p}{q}
\end{equation*}
The time interval, measured in years, is given by the function
\begin{equation*}
\tau\bigl\langle\cuhinterval{r}{s},\cuhinterval{p}{q},\zeta(i),\zeta(j),\zeta(k)\bigr\rangle=\cuhinterval{r}{s}
\end{equation*}
\end{cuhmodel}
\noindent
Note that like Model~\ref{m:discrete}, this model is faithful.  But the faithfulness, in this case, no longer requires that the results of some measurements be uncertain.  Instead, given any particular measurement, the state of the system is uncertain.  For example, there are $40$ distinct states $s$ such that $\tau(s)=\cuhinterval{0.29}{0.41}$.

In contrast to Model~\ref{m:discrete}, consider what happens if Model~\ref{m:discrete2} is used to explain measurements taken at time $t=0.306$ years.  Associated with each state
\begin{equation*} \bigl\langle\cuhinterval{r}{s},\cuhinterval{p}{q},\zeta(i),\zeta(j),\zeta(k)\bigr\rangle
\end{equation*}
in Model~\ref{m:discrete2} is an integer $k$, intended to represent the time interval $\cuhinterval{\frac{k}{100}}{\frac{k+1}{100}}$ during which the system is in that state.  At time $t=0.306$ years, $k=30$, and the system could be in one of the following four states:
\begin{align*}
&\bigl\langle\cuhinterval{0.19}{0.31},\cuhinterval{68.4}{111.6},\zeta(-1),\zeta(-1),\zeta(30)\bigr\rangle\\ &\bigl\langle\cuhinterval{0.29}{0.41},\cuhinterval{68.4}{111.6},\zeta(+1),\zeta(-1),\zeta(30)\bigr\rangle\\ &\bigl\langle\cuhinterval{0.19}{0.31},\cuhinterval{104.4}{147.6},\zeta(-1),\zeta(+1),\zeta(30)\bigr\rangle\\ &\bigl\langle\cuhinterval{0.29}{0.41},\cuhinterval{104.4}{147.6},\zeta(+1),\zeta(+1),\zeta(30)\bigr\rangle
\end{align*}
Like Model~\ref{m:discrete}, two time measurements are possible, $\cuhinterval{0.19}{0.31}$ or $\cuhinterval{0.29}{0.41}$.  And since the actual angular position of the Earth at time $t=0.306$ years is about $112$ degrees, the measured position of the Earth is $\cuhinterval{104.4}{147.6}$ degrees at that time.  Unlike Model~\ref{m:discrete}, this position measurement is compatible with either time measurement, since
\begin{align*}
\bigl\langle\cuhinterval{0.19}{0.31},\cuhinterval{104.4}{147.6}&,\zeta(-1),\zeta(+1),\zeta(30)\bigr\rangle\\
\bigl\langle\cuhinterval{0.29}{0.41},\cuhinterval{104.4}{147.6}&,\zeta(+1),\zeta(+1),\zeta(30)\bigr\rangle
\end{align*}
are both states of Model~\ref{m:discrete2}.

\section{Isomorphism Theorems} \label{s:theorems}

Given a physical model with a set $S$ of states and a set $A=\{\alpha_1,\alpha_2,\alpha_3,\ldots\}$ of observable quantities, we write $(S,A)$ as an abbreviation for that model.\footnote{In the interest of generality, the definition of a computable physical model places no restrictions on the set $A$ except that its members must be total recursive functions.  Some authors prefer to restrict their attention to finite sets $A$.  For example, see reference~\cite{jC92}.  Other authors may prefer to restrict their attention to observable quantities computed by programs which belong to a recursively enumerable set.}
\begin{cuhdef}
Two physical models $(S,A)$ and $(T,B)$ are \emph{isomorphic}\index{physical models!isomorphic} if and only if there exist bijections $\phi:S\to T$ and $\psi:A\to B$ such that $\alpha(s)=\psi(\alpha)\bigl(\phi(s)\bigr)$ for all $s\in S$ and all $\alpha\in A$.
\end{cuhdef}
\noindent
Intuitively, isomorphic physical models can be thought of as providing identical descriptions of the same system.\footnote{Rosen\index{Rosen, Robert}~\cite{rR78} defined a weaker notion of isomorphism.\index{physical models!isomorphic}  Physical models that are isomorphic in Rosen's\index{Rosen, Robert} sense are not necessarily isomorphic in the sense described here.}

Given any particular computable physical model $(S,A)$, there are many different models which are isomorphic to $(S,A)$.  The following two theorems provide some convenient forms for the representation of computable physical models.  Let $\pi_i^n$ be the \emph{projection function} that takes a length $n$ sequence of non-negative integers and outputs the $i$th element of the sequence.  That is, $\pi_i^n\langle x_1,x_2,\ldots,x_n\rangle=x_i$ for any positive integer $i\leq n$.

\begin{cuhtheorem}
If $A$ is a finite set, then the computable physical model $(S,A)$ is isomorphic to some computable physical model whose observable quantities are all projection functions.
\end{cuhtheorem}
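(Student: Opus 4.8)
Suppose $A=\{\alpha_1,\dots,\alpha_n\}$ is finite. The idea is to build a new model $(T,B)$ in which a state records not only the original state's identity but also all of the finitely many observable values at that state, arranged as a sequence, and then let the observable quantities simply read off the appropriate coordinate. Concretely, define a map $\phi:S\to\mathbb{N}$ by
\begin{equation*}
\phi(s)=\bigl\langle \alpha_1(s),\alpha_2(s),\ldots,\alpha_n(s),s\bigr\rangle,
\end{equation*}
using the iterated pairing notation from Section~\ref{s:cuh} so that $\phi(s)$ is the length $n+1$ sequence whose first $n$ entries are the observable values and whose last entry is $s$ itself. Let $T=\phi(S)$, and let $B=\{\pi_1^{n+1},\pi_2^{n+1},\ldots,\pi_n^{n+1}\}$ be the set of the first $n$ projection functions on length $n+1$ sequences. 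Define $\psi:A\to B$ by $\psi(\alpha_i)=\pi_i^{n+1}$.

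First I would check that $\phi$ is a bijection from $S$ onto $T$: it is surjective onto $T$ by definition, and injective because the last coordinate $\pi_{n+1}^{n+1}\bigl(\phi(s)\bigr)=s$ recovers $s$, so distinct states have distinct images. Next I would verify the intertwining identity: for $s\in S$ and $\alpha_i\in A$ we have $\psi(\alpha_i)\bigl(\phi(s)\bigr)=\pi_i^{n+1}\bigl\langle\alpha_1(s),\ldots,\alpha_n(s),s\bigr\rangle=\alpha_i(s)$, which is exactly the condition in the definition of isomorphism. Since $A$ and $B$ both have $n$ elements and $\psi$ is a surjection between them (each $\pi_i^{n+1}$ for $1\le i\le n$ is hit), $\psi$ is a bijection — here one should note the mild subtlety that the $\alpha_i$ are assumed distinct as functions, and likewise the $\pi_i^{n+1}$ are distinct, so no collapsing occurs and $|B|=n$.

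The remaining obligations are the effectivity conditions required for $(T,B)$ to be a \emph{computable} physical model. The set $T$ must be recursive: given a non-negative integer $x$, decode it as a length $n+1$ sequence $\langle y_1,\ldots,y_n,s\rangle$ (the pairing function and its inverses are recursive), check that $s\in S$ (which is decidable since $S$ is recursive), and check that $y_i=\alpha_i(s)$ for each $i\le n$ (each $\alpha_i$ is total recursive, and $n$ is a fixed finite number, so this is a finite recursive computation); $x\in T$ iff all these checks pass. Each projection function $\pi_i^{n+1}$ is total recursive, so $B$ consists of total recursive functions as required. Therefore $(T,B)$ is a computable physical model isomorphic to $(S,A)$ whose observable quantities are all projection functions.

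**The main obstacle** is not any single hard step but rather making sure the finiteness of $A$ is genuinely used and that the bookkeeping with the iterated pairing $\langle\,\cdot\,\rangle$ is handled cleanly — in particular, that "$\pi_i^{n+1}$" is well-defined and recursive for the fixed $n$ at hand, and that the recursiveness of $T$ really does reduce to finitely many recursive tests. If $A$ were infinite, the test "$y_i=\alpha_i(s)$ for all $i$" would be an infinite conjunction and $T$ need not be recursive, which is exactly why the hypothesis is needed; I would remark on this to motivate the next theorem, which presumably handles the general (countably infinite) case by a different encoding.
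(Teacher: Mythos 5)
Your construction is identical to the paper's: the paper defines $T=\bigl\{\langle\alpha_1(s),\ldots,\alpha_n(s),s\rangle\mid s\in S\bigr\}$ with $B=\{\pi_1^{n+1},\ldots,\pi_n^{n+1}\}$ and asserts the isomorphism by construction, while you additionally spell out the bijectivity, the intertwining identity, and the recursiveness of $T$. Your proof is correct and takes essentially the same approach, just with the routine verifications made explicit.
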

\begin{proof}
Given a computable physical model $(S,A)$ with $A=\{\alpha_1,\alpha_2,\ldots,\alpha_n\}$, let $(T,B)$ be the computable physical model such that
\begin{equation*}
T=\bigl\{\,\bigl\langle\alpha_1(s),\alpha_2(s),\ldots,\alpha_n(s),s\bigr\rangle\bigm| s\in S\,\bigr\}
\end{equation*}
and let $B$ be the set of projection functions $\{\pi_1^{n+1},\pi_2^{n+1},\ldots,\pi_n^{n+1}\}$.  By construction, $(T,B)$ is a computable physical model isomorphic to $(S,A)$.
\end{proof}

\begin{cuhtheorem} \label{t:intstates}
If $S$ is an infinite set, then the computable physical model $(S,A)$ is isomorphic to some computable physical model whose set of states is the set of all non-negative integers.  If $S$ has $n$ elements, then the computable physical model $(S,A)$ is isomorphic to some computable physical model whose set of states is $\{0,1,\ldots,n-1\}$.
\end{cuhtheorem}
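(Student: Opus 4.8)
The plan is to construct an explicit recursive bijection from $S$ onto the required set of states and then to transport the observable quantities of $(S,A)$ across that bijection. Since $(S,A)$ is a computable physical model, $S$ is a recursive set of non-negative integers. Define $\phi$ on $S$ by letting $\phi(s)$ be the number of elements of $S$ that are strictly less than $s$; equivalently, $\phi$ assigns to each member of $S$ its rank in the increasing enumeration of $S$. Because membership in $S$ is decidable, $\phi$ is a total recursive function, it is injective, and its range is an initial segment of the non-negative integers: the entire set of non-negative integers when $S$ is infinite, and $\{0,1,\ldots,n-1\}$ when $S$ has $n$ elements. Let $T$ denote this range, which is a recursive set in either case.

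First I would verify that $\phi$, regarded as a bijection from $S$ onto $T$, has a recursive inverse. In the finite case this is immediate, since $\phi$ is then a bijection between two finite sets. In the infinite case, given a non-negative integer $k$, one computes $\phi^{-1}(k)$ by testing $0,1,2,\ldots$ for membership in $S$ in turn and outputting the $(k+1)$st element found; this search terminates precisely because $S$ is infinite, so $\phi^{-1}$ is total recursive.

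Next I would define the transported observable quantities. For each $\alpha\in A$ set $\psi(\alpha)=\alpha\circ\phi^{-1}$, which is a composition of total recursive functions and hence itself a total recursive function on $T$, and let $B=\{\psi(\alpha)\mid\alpha\in A\}$. Then $(T,B)$ is a computable physical model. The map $\psi\colon A\to B$ is surjective by construction, and it is injective because $\phi^{-1}$ is a bijection: if $\alpha_1\circ\phi^{-1}=\alpha_2\circ\phi^{-1}$, then composing on the right with $\phi$ gives $\alpha_1=\alpha_2$. Finally, for every $s\in S$ and every $\alpha\in A$,
\begin{equation*}
\psi(\alpha)\bigl(\phi(s)\bigr)=\alpha\bigl(\phi^{-1}(\phi(s))\bigr)=\alpha(s),
\end{equation*}
so $\phi$ and $\psi$ witness an isomorphism between $(S,A)$ and $(T,B)$, as required.

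The only substantive point is the recursiveness of $\phi^{-1}$ in the infinite case, and this is exactly where the hypothesis that $S$ is infinite enters: without it, the naive search for the $(k+1)$st element of $S$ might never halt. The remaining claims — that $T$ is recursive, that each $\psi(\alpha)$ is total recursive, and that the bijection and compatibility conditions in the definition of isomorphism hold — are routine verifications from the definitions.
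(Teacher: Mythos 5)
Your proof is correct and takes essentially the same approach as the paper: the paper invokes the standard fact that an infinite recursive set is the range of a recursive bijection $\psi$ from the non-negative integers (your $\phi^{-1}$, which you construct explicitly by search) and transports each observable $\alpha$ to $\alpha\circ\psi$, exactly as you do. Your write-up simply makes explicit the construction of the bijection and the routine verification of the isomorphism conditions that the paper leaves to the reader.
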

\begin{proof}
By definition, if $(S,A)$ is a computable physical model, then $S$ is a recursive set.  It immediately follows that $S$ is recursively enumerable.  In particular, if $S$ is infinite, then let $T$ be the set of non-negative integers and there is a bijective recursive function $\psi$ from $T$ to $S$.  If $S$ has $n$ elements, then there is a bijective recursive function $\psi$ from $T=\{0,1,\ldots,n-1\}$ to $S$.  Now, given $A=\{\alpha_1,\alpha_2,\alpha_3,\ldots\}$, let $B=\{\alpha_1\circ\psi,\alpha_2\circ\psi,\alpha_3\circ\psi,\ldots\}$, where $\alpha\circ\psi$ denotes the composition of the functions $\alpha$ and $\psi$.  By construction, $(S,A)$ is isomorphic to $(T,B)$.
\end{proof}

Although Theorem~\ref{t:intstates} implies that any computable physical model $(S,A)$ is isomorphic to a computable physical model $(T,B)$ where $T$ is a set of consecutive non-negative integers beginning with zero, there is no effective procedure for constructing a program that computes the characteristic function of $T$, given a program for computing the characteristic function of $S$ when $S$ is a finite set.  That is, Theorem~\ref{t:intstates} does not hold uniformly.

\begin{cuhdef}
A \emph{non-negative\index{physical models!non-negative integer} integer physical model} is a pair $(S,A)$ where $S$ is a set of non-negative integers and where each member of $A$ is a partial function from the non-negative integers to the non-negative integers.  $S$ is the set of states of the model, and $A$ is the set of observable quantities of the model.
\end{cuhdef}

Note that if $\alpha$ is an observable quantity of a non-negative integer physical model $(S,A)$, then $\alpha$ might be undefined for some inputs.  The non-negative integer physical models form a more general class of objects than the computable physical models.  In particular, a computable physical model is a non-negative integer physical model whose set of states is a recursive set and whose observable quantities are total recursive functions.
\begin{cuhtheorem} \label{t:remodel}
A non-negative integer physical model $(S,A)$ is isomorphic to some computable physical model if $S$ is a recursively enumerable set and if each member of $A$ is a partial recursive function whose domain includes all the members of $S$.
\end{cuhtheorem}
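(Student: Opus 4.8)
The plan is to convert the recursive enumerability of $S$ into genuine recursiveness by re-coding the states, while reserving an infinite supply of ``unused'' non-negative integers so that the correspondence between observable quantities can be kept injective. I would treat the case that $S$ is infinite first; the finite and empty cases are easier variants of the same idea. So suppose $S$ is infinite. Exactly as in the proof of Theorem~\ref{t:intstates}, an infinite recursively enumerable set is the range of an injective total recursive function, so I fix a recursive bijection $h$ from the non-negative integers onto $S$.

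Next I would let $T$ be the set of even non-negative integers, a recursive set, and put $\phi(h(n))=2n$, so that $\phi$ is a bijection from $S$ onto $T$ and the complement of $T$ (the odd non-negative integers) is infinite. Mapping $S$ onto the even integers rather than onto all of $\mathbb{N}$, as in Theorem~\ref{t:intstates}, is precisely what leaves the odd integers free for later use. The key observation is that for any partial recursive function $\alpha$ whose domain includes $S$, the function $n\mapsto\alpha(h(n))$ is total recursive: since $h(n)$ always lies in $S$, the computation of $\alpha$ on input $h(n)$ is guaranteed to halt. In other words, composing with $h$ absorbs the partiality of the original observable quantities.

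Then I would fix an enumeration $A=\{\alpha_0,\alpha_1,\alpha_2,\ldots\}$ (a finite list if $A$ is finite; this enumeration need not be effective, which is harmless since, as with Theorem~\ref{t:intstates}, the statement need not hold uniformly), and for each index $k$ define
\[
\beta_k(m)=
\begin{cases}
\alpha_k\bigl(h(m/2)\bigr) & \text{if $m$ is even,}\\
k & \text{if $m$ is odd.}
\end{cases}
\]
Each $\beta_k$ is total recursive, since parity is decidable and each branch is computable by the observation above; hence $(T,B)$ with $B=\{\beta_k:k\}$ is a computable physical model. Putting $\psi(\alpha_k)=\beta_k$, one has $\beta_k(\phi(s))=\beta_k(2n)=\alpha_k(h(n))=\alpha_k(s)$ for every $s=h(n)\in S$, which is the intertwining identity required by the definition of isomorphism. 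It then remains only to check that $\psi$ is a bijection onto $B$: surjectivity is immediate, and injectivity holds because evaluating $\beta_j$ and $\beta_k$ at any odd integer returns $j$ and $k$ respectively, so distinct indices yield distinct functions. The finite and empty cases run identically, with the (still infinite) complement of $S$ playing the role of the odd integers; when $S=\emptyset$ every $\beta_k$ simply becomes the constant function $k$.

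I expect the bijectivity of $\psi$ to be the one genuinely delicate point. The naive attempt --- take $T=\mathbb{N}$ as in Theorem~\ref{t:intstates} and set $B=\{\alpha\circ h:\alpha\in A\}$ --- fails as soon as $A$ contains two distinct partial recursive functions that agree on all of $S$ but differ at some integer outside $S$, because then their composites with $h$ coincide and $\psi$ is not one-to-one. Reserving an infinite set of integers on which to record the index $k$ repairs this while keeping $T$ recursive, the $\beta_k$ total recursive, and the intertwining identity intact; everything beyond this point is routine verification.
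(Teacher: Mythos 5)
Your proof is correct, but it takes a more careful route than the paper does. The paper's proof of this theorem is essentially one sentence: it observes that the construction already given for Theorem~\ref{t:intstates} --- take $T=\mathbb{N}$ (or an initial segment), a recursive bijection $\psi$ from $T$ onto $S$, and $B=\{\alpha\circ\psi\mid\alpha\in A\}$ --- never actually used the recursiveness of $S$ or the totality of the $\alpha$'s beyond their being defined on $S$, so it goes through verbatim under the weaker hypotheses. That is exactly the ``naive attempt'' you flag at the end of your proposal. The issue you raise is genuine: if $A$ contains two distinct partial recursive functions that agree on all of $S$ but differ off $S$, their composites with the enumerating bijection coincide, the set $B$ has strictly smaller cardinality than $A$, and no bijection $\psi:A\to B$ of the kind demanded by the definition of isomorphism can exist. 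The paper silently treats the correspondence $\alpha\mapsto\alpha\circ\psi$ as a bijection without checking injectivity, which amounts to tacitly regarding observable quantities as indexed (intensional) objects rather than as the extensional functions its own definitions describe. Your device of sending $S$ onto the even integers and stamping each $\beta_k$ with its index $k$ on the odd integers repairs this at no cost: $T$ stays recursive, each $\beta_k$ stays total recursive, and the intertwining identity is untouched. The one small point worth making explicit is that your enumeration $\alpha_0,\alpha_1,\ldots$ of $A$ should be chosen without repetitions, so that $\psi(\alpha_k)=\beta_k$ is well defined; with that said, your argument is complete, and it in fact proves a slightly sharper statement than the paper's own proof does.
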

\begin{proof}
Given a non-negative integer physical model $(S,A)$, note that the construction of the computable physical model $(T,B)$ in the proof of Theorem~\ref{t:intstates} only requires that $S$ be a recursively enumerable set and that each member of $A$ be a partial recursive function whose domain includes all the members of $S$.  Therefore, any such non-negative integer physical model $(S,A)$ is isomorphic to the computable physical model $(T,B)$.
\end{proof}

Let $\phi$ be any partial recursive function and let $S$ be the largest set of consecutive non-negative integers beginning with zero such that $\phi(s)$ is defined for each $s\in S$.  Let $A$ be the set $\{\pi_1^{m}\circ\phi,\pi_2^{m}\circ\phi,\ldots,\pi_m^{m}\circ\phi\}$ for some non-negative integer $m$.  By Theorem~\ref{t:remodel}, $(S,A)$ is isomorphic to a computable physical model.  We say that any such computable physical model is \emph{determined}\index{physical models!determined by partial recursive functions} by $\phi$.
\begin{cuhtheorem}
Every computable physical model with finitely many observable quantities is determined by some partial recursive function $\phi$.
\end{cuhtheorem}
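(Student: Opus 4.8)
The plan is to let $\phi$ be the recursive function that tuples together the finitely many observable quantities of the given model, after first replacing the model by an isomorphic one whose set of states is an initial segment of the non-negative integers.

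Let $(S,A)$ be a computable physical model with $A=\{\alpha_1,\dots,\alpha_m\}$. By Theorem~\ref{t:intstates}, $(S,A)$ is isomorphic to a computable physical model $(T,B)$ whose set of states $T$ is either the set of all non-negative integers (when $S$ is infinite) or $\{0,1,\dots,n-1\}$ (when $S$ has $n$ elements); the isomorphism provided there carries each $\alpha_i$ to $\alpha_i\circ\psi$ for a recursive bijection $\psi$, so $B=\{\beta_1,\dots,\beta_m\}$ has the same number of members, each $\beta_i$ being recursive and defined on all of $T$. Now define $\phi$ by setting $\phi(s)=\langle\beta_1(s),\dots,\beta_m(s)\rangle$ for each $s\in T$ and leaving $\phi(s)$ undefined for $s\notin T$. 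Since $T$ is a recursive set and the $\beta_i$ are recursive, $\phi$ is partial recursive with $\cuhdom\phi=T$; and since $T$ is a set of consecutive non-negative integers beginning with $0$, it is the largest such set on which $\phi$ is defined. Hence the set ``$S$'' that the definition of ``determined by $\phi$'' attaches to this $\phi$ is exactly $T$.

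Taking the parameter in that definition to be $m$, the associated observable quantities are $\pi_1^m\circ\phi,\dots,\pi_m^m\circ\phi$, and for every $s\in T$ we have $(\pi_i^m\circ\phi)(s)=\beta_i(s)$. Each $\pi_i^m\circ\phi$ is partial recursive with domain $T$, which includes $T$, so by Theorem~\ref{t:remodel} the model $\bigl(T,\{\pi_1^m\circ\phi,\dots,\pi_m^m\circ\phi\}\bigr)$ is isomorphic to a computable physical model. Finally I would exhibit an isomorphism of $(T,B)$ with $\bigl(T,\{\pi_i^m\circ\phi\}\bigr)$, using the identity on $T$ for the bijection of states and $\pi_i^m\circ\phi\mapsto\beta_i$ for the bijection of observable quantities; the compatibility condition $\alpha(s)=\psi(\alpha)\bigl(\phi(s)\bigr)$ from the definition of isomorphism is exactly the identity $(\pi_i^m\circ\phi)(s)=\beta_i(s)$ noted above. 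Composing, $(S,A)\cong(T,B)\cong\bigl(T,\{\pi_i^m\circ\phi\}\bigr)$, and since the last model is of the shape appearing in the definition of ``determined by $\phi$'', the model $(S,A)$ is determined by $\phi$.

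The step I expect to be the only real obstacle is keeping the projection functions $\pi_1^m\circ\phi,\dots,\pi_m^m\circ\phi$ pairwise distinct in whatever sense the $\beta_i$ are, so that $\{\pi_1^m\circ\phi,\dots,\pi_m^m\circ\phi\}$ has the same number of elements as $B$ and the bijection $\pi_i^m\circ\phi\mapsto\beta_i$ of the last isomorphism genuinely exists. First reducing to the model $(T,B)$ with state set an initial segment is what secures this, since then $T$ is the whole domain of $\phi$ and $\phi$ reproduces each $\beta_i$ on $T$, so two projections coincide precisely when the corresponding $\beta_i$ do. The remaining degenerate cases fit the same pattern: for $m=0$ one takes $\phi$ to be a total constant when $S$ is infinite and the function defined exactly on $\{0,\dots,|S|-1\}$ otherwise, so that the empty observable-quantity sets match and the isomorphism conditions are vacuous; and if one wishes to distinguish observable quantities by all of their values rather than by their values on the states --- so that, say, the state set could be empty while $B$ still has several members --- one defines $\phi$ on a handful of inputs beyond the state part of its domain, recording there the values of the $\beta_i$ at points where they differ, while keeping the least omitted non-negative integer out of $\cuhdom\phi$ so that the attached initial segment is unchanged.
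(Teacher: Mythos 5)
Your proof is correct and takes essentially the same route as the paper's: compose the observable quantities with the recursive bijection $\psi$ from Theorem~\ref{t:intstates}, tuple the results to define $\phi$ on an initial segment of $\mathbb{N}$ (undefined beyond it when $S$ is finite), and check that the resulting model of projections is isomorphic to $(S,A)$. The paper compresses the verification into ``by construction,'' whereas you spell out the isomorphism and flag the edge case where distinct $\beta_i$ agree on all of $T$; that extra care is a refinement of the same argument, not a different one.
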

\begin{proof}
Let $(S,A)$ be a computable physical model with $A=\{\alpha_1,\alpha_2,\ldots,\alpha_m\}$.  Let $\psi$ be the recursive function given in the proof of Theorem~\ref{t:intstates}.  If $S$ has only $n$ states, then let $\phi(i)$ be undefined for all non-negative integers $i\geq n$.  Otherwise, define
\begin{equation*}
\phi(i)=\bigl\langle\alpha_1(\psi(i)),\alpha_2(\psi(i)),\ldots,\alpha_m(\psi(i))\bigr\rangle
\end{equation*}
By construction, $(S,A)$ is determined by $\phi$.
\end{proof}

A physical model $(S,A)$ is said to be \emph{reduced}\index{physical models!reduced} if and only if for each pair of distinct states $s_1$ and $s_2$ in $S$, there exists an $\alpha\in A$ with $\alpha(s_1)\ne\alpha(s_2)$.
\begin{cuhtheorem} \label{t:reduced}
If $(S,A)$ and $(T,B)$ are isomorphic physical models and $(S,A)$ is reduced, then $(T,B)$ is also a reduced physical model.
\end{cuhtheorem}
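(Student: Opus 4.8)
The plan is to unwind the definition of isomorphism and transport the reducedness property of $(S,A)$ across the bijection $\phi$. By hypothesis there are bijections $\phi\colon S\to T$ and $\psi\colon A\to B$ satisfying $\alpha(s)=\psi(\alpha)\bigl(\phi(s)\bigr)$ for every $s\in S$ and every $\alpha\in A$. To verify that $(T,B)$ is reduced, I would begin with an arbitrary pair of distinct states $t_1,t_2\in T$ and exhibit a $\beta\in B$ with $\beta(t_1)\ne\beta(t_2)$.

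First I would pull the two states back along $\phi$: since $\phi$ is a bijection, set $s_1=\phi^{-1}(t_1)$ and $s_2=\phi^{-1}(t_2)$, and note that injectivity of $\phi$ forces $s_1\ne s_2$ from $t_1\ne t_2$. Because $(S,A)$ is reduced, there is some $\alpha\in A$ with $\alpha(s_1)\ne\alpha(s_2)$. Then I would push this separating observable forward along $\psi$: let $\beta=\psi(\alpha)$, which belongs to $B$ since $\psi$ maps $A$ onto $B$. Applying the defining identity of the isomorphism twice gives $\beta(t_1)=\psi(\alpha)\bigl(\phi(s_1)\bigr)=\alpha(s_1)$ and $\beta(t_2)=\psi(\alpha)\bigl(\phi(s_2)\bigr)=\alpha(s_2)$, whence $\beta(t_1)\ne\beta(t_2)$. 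As $t_1,t_2$ were an arbitrary pair of distinct states of $T$, the model $(T,B)$ is reduced.

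There is no real obstacle in this argument; it is a purely set-theoretic consequence of the definition of isomorphism, with none of the recursion-theoretic content of the earlier theorems coming into play. The only points demanding any care are that one genuinely uses the injectivity of $\phi$ to conclude $s_1\ne s_2$, and the surjectivity of $\psi$ onto $B$ to ensure that the separating function $\beta$ is actually an observable quantity of the model $(T,B)$ rather than merely an element of the ambient function space.
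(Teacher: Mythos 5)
Your argument is correct and follows essentially the same route as the paper's proof: pull the distinct states $t_1,t_2$ back to $\phi^{-1}(t_1)\ne\phi^{-1}(t_2)$, apply reducedness of $(S,A)$ to get a separating $\alpha$, and observe via the identity $\alpha(s)=\psi(\alpha)\bigl(\phi(s)\bigr)$ that $\beta=\psi(\alpha)$ separates $t_1$ and $t_2$. No gaps.
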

\begin{proof}
Let $(S,A)$ and $(T,B)$ be isomorphic physical models and let $(S,A)$ be reduced.  Since $(S,A)$ and $(T,B)$ are isomorphic, there exist bijections $\phi:S\to T$ and $\psi:A\to B$ such that $\alpha(s)=\psi(\alpha)\bigl(\phi(s)\bigr)$ for all $s\in S$ and all $\alpha\in A$.  Now suppose that $t_1$ and $t_2$ are distinct states in $T$.  Because $\phi$ is a bijection, $\phi^{-1}(t_1)$ and $\phi^{-1}(t_2)$ are distinct states in $S$.  But $S$ is reduced, so there must exist an $\alpha\in A$ such that $\alpha\bigl(\phi^{-1}(t_1)\bigr)\ne\alpha\bigl(\phi^{-1}(t_2)\bigr)$.  Furthermore,
\begin{equation*}
\alpha\bigl(\phi^{-1}(t_1)\bigr)=\psi(\alpha)\bigl(\phi\bigl(\phi^{-1}(t_1)\bigr)\bigr)=\psi(\alpha)(t_1)
\end{equation*}
and
\begin{equation*}
\alpha\bigl(\phi^{-1}(t_2)\bigr)=\psi(\alpha)\bigl(\phi\bigl(\phi^{-1}(t_2)\bigr)\bigr)=\psi(\alpha)(t_2)
\end{equation*}
Hence, there exists a $\beta\in B$ such that $\beta(t_1)\ne\beta(t_2)$, namely $\beta=\psi(\alpha)$.  We may conclude that the physical model $(T,B)$ is reduced.
\end{proof}

An \emph{epimorphism}\index{physical models!epimorphic} from a physical model $(S,A)$ to a physical model $(T,B)$ is a pair of functions $(\phi,\psi)$ such that $\phi$ is a surjection from $S$ to $T$ and $\psi$ is a bijection from $A$ to $B$, where $\alpha(s)=\psi(\alpha)\bigl(\phi(s)\bigr)$ for all $s\in S$ and all $\alpha\in A$.  Two physical models $(S_1,A_1)$ and $(S_2,A_2)$ are said to be \emph{observationally\index{physical models!observationally equivalent} equivalent} if and only if there are epimorphisms from $(S_1,A_1)$ to $(T,B)$ and from $(S_2,A_2)$ to $(T,B)$, where $(T,B)$ is some reduced physical model.
\begin{cuhtheorem} \label{t:obseq}
If $(S_1,A_1)$ and $(S_2,A_2)$ are isomorphic physical models, then $(S_1,A_1)$ and $(S_2,A_2)$ are observationally equivalent.
\end{cuhtheorem}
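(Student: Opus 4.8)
The plan is to exhibit a single reduced physical model $(T,B)$ that receives an epimorphism from each of $(S_1,A_1)$ and $(S_2,A_2)$; by the definition of observational equivalence, producing such a $(T,B)$ finishes the proof. The model I would use is the \emph{reduction} of $(S_1,A_1)$, constructed as follows. Define an equivalence relation on $S_1$ by declaring $s\sim s'$ exactly when $\alpha(s)=\alpha(s')$ for every $\alpha\in A_1$, put $T=S_1/{\sim}$, and for each $\alpha\in A_1$ let $\bar\alpha$ be the function on $T$ given by $\bar\alpha([s])=\alpha(s)$, which is well defined precisely because of how $\sim$ was chosen; set $B=\{\bar\alpha\mid\alpha\in A_1\}$. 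Then the quotient map $\phi_0\colon S_1\to T$ together with $\psi_0\colon\alpha\mapsto\bar\alpha$ is an epimorphism from $(S_1,A_1)$ onto $(T,B)$: $\phi_0$ is a surjection, $\psi_0$ is a bijection (onto $B$ by definition, and injective since two observables with $\bar\alpha=\bar\beta$ agree at every state of $S_1$ and hence are the same function), and $\psi_0(\alpha)\bigl(\phi_0(s)\bigr)=\bar\alpha([s])=\alpha(s)$. Moreover $(T,B)$ is reduced, because if $[s]\ne[s']$ then $s\not\sim s'$, so some $\alpha\in A_1$ has $\alpha(s)\ne\alpha(s')$, i.e.\ $\bar\alpha$ separates $[s]$ from $[s']$.

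Next I would record two elementary facts about epimorphisms. First, any isomorphism is an epimorphism, since a bijection is in particular a surjection; and if $(\phi,\psi)$ is an isomorphism from $(S_1,A_1)$ to $(S_2,A_2)$ then $(\phi^{-1},\psi^{-1})$ is an isomorphism — hence an epimorphism — from $(S_2,A_2)$ to $(S_1,A_1)$, since $\psi^{-1}(\beta)\bigl(\phi^{-1}(t)\bigr)=\beta(t)$ follows from the defining identity of $(\phi,\psi)$ by substituting $\beta=\psi(\alpha)$ and $t=\phi(s)$. Second, the componentwise composition of epimorphisms is an epimorphism: if $(\phi,\psi)$ maps $(S,A)$ to $(T,B)$ and $(\phi',\psi')$ maps $(T,B)$ to $(U,C)$, then $\phi'\circ\phi$ is a surjection, $\psi'\circ\psi$ is a bijection, and applying the two defining identities in succession yields $(\psi'\circ\psi)(\alpha)\bigl((\phi'\circ\phi)(s)\bigr)=\psi(\alpha)\bigl(\phi(s)\bigr)=\alpha(s)$.

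With these in hand the theorem is immediate. Given isomorphic $(S_1,A_1)$ and $(S_2,A_2)$ via $(\phi,\psi)$, take $(T,B)$ to be the reduction of $(S_1,A_1)$ and $e_1=(\phi_0,\psi_0)$ the epimorphism onto it from the first step. Then $(\phi^{-1},\psi^{-1})$ is an epimorphism from $(S_2,A_2)$ to $(S_1,A_1)$, so composing it with $e_1$ gives an epimorphism $e_2$ from $(S_2,A_2)$ to $(T,B)$. Since $(T,B)$ is a reduced physical model and receives the epimorphisms $e_1$ and $e_2$, the models $(S_1,A_1)$ and $(S_2,A_2)$ are observationally equivalent.

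I do not expect a genuine obstacle: the real decision is to use the reduction of either model as the common target $(T,B)$, and everything else is routine verification — well-definedness of the quotient observables, injectivity of $\psi_0$, and the closure of epimorphisms under composition and under taking inverses of isomorphisms. If desired, one can shorten the argument slightly by observing that when $(S_1,A_1)$ is already reduced one may take $(T,B)=(S_1,A_1)$ outright, but the quotient construction covers every case uniformly and makes no appeal to Theorem~\ref{t:reduced}.
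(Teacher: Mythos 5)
Your proposal is correct and follows essentially the same route as the paper: quotient one of the two models by the relation of observational indistinguishability to obtain a reduced model $(T,B)$, exhibit the quotient map as an epimorphism, and transport an epimorphism from the other model via the isomorphism. The only cosmetic difference is that the paper quotients $(S_2,A_2)$ and composes the given isomorphism forward, whereas you quotient $(S_1,A_1)$ and compose with the inverse isomorphism, which requires the (correctly verified) extra observation that the inverse of an isomorphism is again an epimorphism.
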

\begin{proof}
Define an equivalence relation on $S_2$ so that $r\in S_2$ is related to $s\in S_2$ if and only if $\alpha(r)=\alpha(s)$ for all $\alpha\in A_2$.
Let $T$ be the corresponding set of equivalence classes of $S_2$.  For each $\alpha\in A_2$, define a function $\alpha^\prime$ so that if $s\in t\in T$, then $\alpha^\prime(t)=\alpha(s)$.  Let $B=\{\,\alpha^\prime\mid\alpha\in A_2\,\}$.  By construction, $(T,B)$ is a reduced physical model.  Also note that there is an epimorphism $(\phi,\psi)$ from $(S_2,A_2)$ to $(T,B)$.  Namely, $\phi$ is the function that maps each member of $S_2$ to its corresponding equivalence class in $T$, and $\psi$ is the function that maps each $\alpha\in A_2$ to $\alpha^\prime\in B$.

Now suppose that $(S_1,A_1)$ and $(S_2,A_2)$ are isomorphic.  By definition, there are bijections $\phi^\prime$ from $S_1$ to $S_2$ and $\psi^\prime$ from $A_1$ to $A_2$ such that $\alpha(s)=\psi^\prime(\alpha)\bigl(\phi^\prime(s)\bigr)$ for all $s\in S_1$ and all $\alpha\in A_1$.  Since $(\phi,\psi)$ is an epimorphism from $(S_2,A_2)$ to $(T,B)$, it immediately follows that $(\phi\circ\phi^\prime,\psi\circ\psi^\prime)$ is an epimorphism from $(S_1,A_1)$ to $(T,B)$.  We may conclude, by definition, that $(S_1,A_1)$ and $(S_2,A_2)$ are observationally equivalent.
\end{proof}

Intuitively, two physical models are observationally equivalent when they both make the same observable predictions.  For example, as was discussed in Section~\ref{s:coarse}, the model obtained by omitting the observable quantities $\alpha_2$ and $\tau_2$ from Model~\ref{m:high} is observationally equivalent to the discrete model of planetary motion (Model~\ref{m:discrete}).  Moreover, if a physical model $(S_1,A_1)$ is faithful\index{physical models!faithful}, and if $(S_1,A_1)$ is observationally equivalent to $(S_2,A_2)$, then $(S_2,A_2)$ is also faithful.\index{physical models!faithful}

It is important to note that the converse of Theorem~\ref{t:obseq} does not hold.  That is, observationally equivalent models are not necessarily isomorphic. Consider, for example, the computable physical models $\bigl(\{0,1\},\{\alpha\}\bigr)$ and $\bigl(\{0,1,2\},\{\beta\}\bigr)$ where $\alpha(s)=s$ for all $s\in\{0,1\}$ and where $\beta(s)=\lfloor s/2\rfloor$ for all $s\in\{0,1,2\}$.  These models are not isomorphic because $\{0,1\}$ and $\{0,1,2\}$ have different cardinalities.  Yet, they are observationally equivalent, since both models have a single observable quantity whose only possible values are $0$ and $1$.  Physical models that are isomorphic must not only make the same observable predictions, but must also have the same \emph{structure}.  The models $\bigl(\{0,1\},\{\alpha\}\bigr)$ and $\bigl(\{0,1,2\},\{\beta\}\bigr)$ have different structures because, assuming that the states are equally likely, they both give different answers to the question, ``What is the probability that the observable quantity has value $0$?''

\section{Oracles and Effective Topologies} \label{s:oracles}

Given any set $X$, we can impose a topology on $X$.  Let $\mathcal{B}$ be a basis for this topology.  The members of $\mathcal{B}$ are said to be \emph{basis elements}.  We say that a set $\mathcal{L}_x\subseteq\mathcal{B}$ is a \emph{local basis} for a point $x\in X$ if and only if the following two conditions hold.
\begin{enumerate}
\item For each $L\in\mathcal{L}_x$, $x$ is a member of $L$.
\item For each $B\in\mathcal{B}$ with $x\in B$, there exists an $L\in\mathcal{L}_x$ with $L\subseteq B$.
\end{enumerate}
Note that every point $x\in X$ has a local basis.  For example, the set
\begin{equation*}
\mathcal{L}_x=\{\,B\in\mathcal{B}\mid x\in B\,\}
\end{equation*}
of all basis elements that contain $x$ is a local basis for $x$.

If the basis $\mathcal{B}$ is countable, then each basis element can be encoded as a non-negative integer.  In that case, choose some encoding and let $\nu(n)$ be the basis element encoded by $n$.  We allow for the possibility that a basis element may be encoded by more than one non-negative integer.  (That is, $\nu$ is not necessarily an injection.)  The \emph{domain} of $\nu$, denoted $\cuhdom_\mathcal{B}\nu$, is the set of all non-negative integers $n$ such that $\nu(n)\in\mathcal{B}$.  For any function $\phi:A\to B$ and any set $C\subseteq A$, let $\phi(C)=\{\phi(c)\mid c\in C\}$ denote the \emph{image} of $C$ under $\phi$.  We let $\mathbb{N}$ denote the set of non-negative integers.
\begin{cuhdef}
A function $\phi:\mathbb{N}\to\cuhdom_\mathcal{B}\nu$ is said to be an \emph{oracle}\index{oracles for real numbers} for a point $x$, with basis $\mathcal{B}$ and coding $\nu$, if and only if $\nu\bigl(\phi(\mathbb{N})\bigr)$ is a local basis for $x$.
\end{cuhdef}
\noindent
An oracle $\phi$ for $x$ is \emph{complete}\index{oracles for real numbers!complete} if and only if every $n\in\cuhdom_\mathcal{B}\nu$ such that $x\in\nu(n)$ is a member of $\phi(\mathbb{N})$.  An oracle is said to be \emph{nested}\index{oracles for real numbers!nested} if and only if $\nu\bigl(\phi(n+1)\bigr)\subseteq\nu\bigl(\phi(n)\bigr)$ for all $n\in\mathbb{N}$.

A pair $(\mathcal{B},\nu)$ is said to be an \emph{effective\index{topologies!effective} topology} if and only if $\mathcal{B}$ is a countable basis for a $T_0$ topology and $\nu$ is a coding for $\mathcal{B}$.  Effective topologies were first introduced in the theory of type-two\index{type-two effectivity} effectivity~\cite{KW85b,WG09}.  In accordance with that theory, we use an oracle for $x$, with a basis $\mathcal{B}$ and coding $\nu$, as a representation of the point $x$ in an effective topology $(\mathcal{B},\nu)$.  Because effective topologies are $T_0$, no two distinct points are ever represented by the same oracle.

Of special interest are effective topologies where the subset relation
\begin{equation*}
\{\,\langle b_1,b_2\rangle\mid\nu(b_1)\subseteq\nu(b_2)\;\&\;b_1\in\cuhdom_\mathcal{B}\nu\;\&\;b_2\in\cuhdom_\mathcal{B}\nu\,\}
\end{equation*}
is a recursively enumerable set.\footnote{An effective topology with a recursively enumerable subset relation is an example of a \emph{computable\index{topologies!computable} topology}, as defined in reference~\cite{WG09}.  Not all computable topologies have recursively enumerable subset relations.}  In particular, if $(\mathcal{B},\nu)$ has a recursively enumerable subset relation, then
\begin{equation*}
\cuhdom_\mathcal{B}\nu=\{\,b\mid\nu(b)\subseteq\nu(b)\;\&\;b\in\cuhdom_\mathcal{B}\nu\,\}
\end{equation*}
is also a recursively enumerable set.
\begin{cuhtheorem} \label{t:complete}
Let $\phi$ be an oracle for $x$ in an effective topology $(\mathcal{B},\nu)$ with a recursively enumerable subset relation.  Then there exists a complete oracle $\psi$ for $x$ in $(\mathcal{B},\nu)$ that is recursive relative to $\phi$ uniformly.
\end{cuhtheorem}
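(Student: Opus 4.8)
The plan is to produce $\psi$ as a total $\phi$-recursive enumeration of the set of \emph{all} basis codes whose associated basis element contains $x$, and then to observe that this set is recursively enumerable relative to $\phi$ by exploiting the recursively enumerable subset relation. The crux is the following characterization: for any $m\in\cuhdom_\mathcal{B}\nu$ we have $x\in\nu(m)$ if and only if there is an $n\in\mathbb{N}$ with $\nu(\phi(n))\subseteq\nu(m)$. The ``if'' direction follows from condition~(1) in the definition of a local basis, since $x\in\nu(\phi(n))\subseteq\nu(m)$; the ``only if'' direction is precisely condition~(2), applied to the basis element $\nu(m)$, which contains $x$. Hence the set
\[
C=\{\,m\mid\exists n\ \langle\phi(n),m\rangle\text{ lies in the subset relation of }(\mathcal{B},\nu)\,\}
\]
equals $\{\,m\in\cuhdom_\mathcal{B}\nu\mid x\in\nu(m)\,\}$; the requirement $m\in\cuhdom_\mathcal{B}\nu$ comes for free because the subset relation consists only of pairs of such codes, and $\phi(n)\in\cuhdom_\mathcal{B}\nu$ for every $n$ since $\phi$ maps into $\cuhdom_\mathcal{B}\nu$.

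Next I would argue that $C$ is recursively enumerable relative to $\phi$: dovetail the given enumeration of the subset relation against queries to $\phi$, emitting $m$ whenever some pair $\langle\phi(n),m\rangle$ is enumerated. Moreover $C$ is nonempty, because $\nu(\phi(0))\subseteq\nu(\phi(0))$ gives $\phi(0)\in C$. By the relativized form of the standard fact that a nonempty recursively enumerable set is the range of a total recursive function, there is a total $\psi\colon\mathbb{N}\to\mathbb{N}$, recursive relative to $\phi$, with $\psi(\mathbb{N})=C$. This $\psi$ maps into $\cuhdom_\mathcal{B}\nu$, and $\nu(\psi(\mathbb{N}))=\{\,B\in\mathcal{B}\mid x\in B\,\}$, which is exactly the canonical local basis $\mathcal{L}_x$ for $x$ exhibited in Section~\ref{s:oracles}; so $\psi$ is an oracle for $x$. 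It is complete essentially by construction, since every $n\in\cuhdom_\mathcal{B}\nu$ with $x\in\nu(n)$ lies in $C=\psi(\mathbb{N})$. (As the excerpt notes, $\cuhdom_\mathcal{B}\nu$ is itself recursively enumerable here, which is consistent with, though not needed for, this argument.)

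For uniformity, the point is that every step above---the dovetailed search through the subset-relation enumeration and the conversion of a recursively enumerable set to the range of a total recursive function---is performed by a single fixed procedure that accesses $\phi$ only as an oracle (together with the given enumeration of the subset relation of $(\mathcal{B},\nu)$), so the oracle machine computing $\psi$ does not depend on $\phi$ or $x$. I do not expect a serious obstacle; the only care required is in (a) checking that the characterization of $C$ genuinely uses both local-basis conditions, and (b) ensuring that the recursive enumerability and the enumeration-to-total-function step are carried out relative to $\phi$ by a procedure that is uniform rather than merely available pointwise for each fixed $\phi$.
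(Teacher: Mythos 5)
Your proposal is correct and follows essentially the same route as the paper's proof: both characterize the codes $m$ with $x\in\nu(m)$ as exactly those for which some $\nu\bigl(\phi(n)\bigr)\subseteq\nu(m)$, observe that this set is recursively enumerable relative to $\phi$ via the recursively enumerable subset relation, and take $\psi$ to be a total $\phi$-recursive enumeration of it. Your added remarks on nonemptiness (via $\phi(0)$) and on verifying that $\psi$ is itself an oracle are details the paper leaves implicit, but the argument is the same.
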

\begin{proof}
Suppose that $\phi$ is an oracle for $x$ in an effective topology $(\mathcal{B},\nu)$ with a recursively enumerable subset relation.  Since $\nu\bigl(\phi(\mathbb{N})\bigr)$ is a local basis for $x$, it follows that for each $b\in\cuhdom_\mathcal{B}\nu$, $x\in\nu(b)$ if and only if there exists an $n\in\mathbb{N}$ such that $\nu\bigr(\phi(n)\bigl)\subseteq\nu(b)$.  Hence,
\begin{equation*}
\bigl\{\,b\in\cuhdom_\mathcal{B}\nu\bigm|(\exists\,n\in\mathbb{N})\bigl[\nu\bigl(\phi(n)\bigr)\subseteq\nu(b)\bigr]\,\bigr\}
\end{equation*}
is the set of encodings of all basis elements that contain $x$.  But this set is recursively enumerable relative to $\phi$ because $(\mathcal{B},\nu)$ has a recursively enumerable subset relation.  Therefore, there exists a function $\psi:\mathbb{N}\to\cuhdom_\mathcal{B}\nu$, recursive relative to $\phi$, such that $\nu\bigl(\psi(\mathbb{N})\bigr)$ is this set.  By definition, $\psi$ is a complete oracle for $x$.
\end{proof}

\begin{cuhtheorem} \label{t:nest}
Let $\phi$ be an oracle for $x$ in an effective topology $(\mathcal{B},\nu)$ with a recursively enumerable subset relation.  Then there exists a nested oracle $\psi$ for $x$ in $(\mathcal{B},\nu)$ that is recursive relative to $\phi$ uniformly.
\end{cuhtheorem}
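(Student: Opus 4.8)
The plan is to construct $\psi$ directly, by a search that is recursive relative to $\phi$, which thins out and reorders the basis elements supplied by $\phi$ so that the resulting sequence is decreasing under $\subseteq$ while still forming a local basis for $x$. The guiding idea is that a nested oracle is nothing more than a cofinal chain inside the local basis $\nu\bigl(\phi(\mathbb{N})\bigr)$ ordered by reverse inclusion, and the existence of such a chain follows from a directedness property, while its effectivity follows from the recursively enumerable subset relation.

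First I would record the key combinatorial fact: the family $\nu\bigl(\phi(\mathbb{N})\bigr)$ is downward directed with respect to $\subseteq$. Indeed, for any indices $i$ and $j$, both $\nu\bigl(\phi(i)\bigr)$ and $\nu\bigl(\phi(j)\bigr)$ contain $x$, so $\nu\bigl(\phi(i)\bigr)\cap\nu\bigl(\phi(j)\bigr)$ is an open set containing $x$; since $\mathcal{B}$ is a basis there is a $B\in\mathcal{B}$ with $x\in B\subseteq\nu\bigl(\phi(i)\bigr)\cap\nu\bigl(\phi(j)\bigr)$, and since $\nu\bigl(\phi(\mathbb{N})\bigr)$ is a local basis for $x$ there is a $k$ with $x\in\nu\bigl(\phi(k)\bigr)\subseteq B$. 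Hence $\nu\bigl(\phi(k)\bigr)\subseteq\nu\bigl(\phi(i)\bigr)\cap\nu\bigl(\phi(j)\bigr)$. The same argument works with $\nu\bigl(\phi(i)\bigr)$ replaced by any open set containing $x$ that has already been committed during the construction.

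Then I would build $\psi$ in stages. Set $\psi(0)=\phi(0)$. Given $\psi(n)$, search---by dovetailing the enumeration of the subset relation, which is recursively enumerable, while querying $\phi$---for a number $k$ together with witnesses that $\nu\bigl(\phi(k)\bigr)\subseteq\nu\bigl(\psi(n)\bigr)$ and $\nu\bigl(\phi(k)\bigr)\subseteq\nu\bigl(\phi(n+1)\bigr)$, and put $\psi(n+1)=\phi(k)$ for the first such $k$ found. By the directedness fact, applied to the open sets $\nu\bigl(\psi(n)\bigr)$ and $\nu\bigl(\phi(n+1)\bigr)$---both of which contain $x$, the former by the inductive hypothesis and the latter because $\phi$ is an oracle for $x$---such a $k$ exists, so the search halts. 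Since the search uses only the oracle $\phi$ together with the recursively enumerable subset relation, and the procedure itself does not depend on $\phi$, this shows $\psi$ is recursive relative to $\phi$ uniformly, and that $\psi$ maps $\mathbb{N}$ into $\cuhdom_\mathcal{B}\nu$ because every value of $\psi$ is a value of $\phi$.

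Finally I would verify that $\psi$ is a nested oracle for $x$: nestedness $\nu\bigl(\psi(n+1)\bigr)\subseteq\nu\bigl(\psi(n)\bigr)$ is immediate from the search condition; each $\nu\bigl(\psi(n)\bigr)$ contains $x$ because $\psi(n)$ is a value of the oracle $\phi$; and for the local-basis property, given $B\in\mathcal{B}$ with $x\in B$ there is an $m$ with $\nu\bigl(\phi(m)\bigr)\subseteq B$, whence $\nu\bigl(\psi(m)\bigr)\subseteq\nu\bigl(\phi(m)\bigr)\subseteq B$ (with equality in the first step when $m=0$). The main obstacle is exactly that the subset relation and the membership predicate $x\in\nu(b)$ are only semidecidable, so candidate basis elements cannot be tested outright; the construction must be arranged so that every element committed is already known to contain $x$ (by drawing it from the range of $\phi$) and every inclusion that must be verified is one that genuinely holds (so the enumeration terminates). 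Pinning down the directedness lemma and threading it through the staging is the crux; the rest is bookkeeping.
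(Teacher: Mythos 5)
Your proposal is correct and follows essentially the same route as the paper: define $\psi(0)=\phi(0)$, then at each stage search (via the recursively enumerable subset relation, relative to $\phi$) for a value $\phi(k)$ whose image under $\nu$ is contained in $\nu\bigl(\psi(n)\bigr)\cap\nu\bigl(\phi(n+1)\bigr)$, with existence guaranteed by the basis axiom together with the local-basis property of $\nu\bigl(\phi(\mathbb{N})\bigr)$, and with the oracle property of $\psi$ following from $\nu\bigl(\psi(m)\bigr)\subseteq\nu\bigl(\phi(m)\bigr)$. Your explicit framing of the key step as downward directedness is only a presentational difference from the paper's argument.
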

\begin{proof}
Suppose that $\phi$ is an oracle for $x$ in an effective topology $(\mathcal{B},\nu)$ with a recursively enumerable subset relation.  Note that for each pair of basis elements $B_1$ and $B_2$ such that $x\in B_1\cap B_2$, there exists a basis element $B_3$ with $x\in B_3\subseteq B_1\cap B_2$, by the definition of a basis.  Therefore, since $\nu\bigl(\phi(\mathbb{N})\bigr)$ is a local basis for $x$, there must exist an $m\in\mathbb{N}$ such that
\begin{equation*}
x\in\nu\bigl(\phi(m)\bigr)\subseteq B_3\subseteq B_1\cap B_2
\end{equation*}
Now define $\psi:\mathbb{N}\to\cuhdom_\mathcal{B}\nu$ recursively, relative to $\phi$, as follows.  Let $\psi(0)=\phi(0)$ and for each $n\in\mathbb{N}$ let $\psi(n+1)=\phi(m)$ for some $m\in\mathbb{N}$ such that
\begin{equation*}
\nu\bigl(\phi(m)\bigr)\subseteq\nu\bigl(\psi(n)\bigr)\cap\nu\bigl(\phi(n+1)\bigr)
\end{equation*}
We can find $m$ recursively given $\psi(n)$ and $\phi$ because the subset relation for $(\mathcal{B},\nu)$ is recursively enumerable, and the set of all $m\in\mathbb{N}$ such that
\begin{equation*}
\nu\bigl(\phi(m)\bigr)\subseteq\nu\bigl(\psi(n)\bigr)\;\&\;\nu\bigl(\phi(m)\bigr)\subseteq\nu\bigl(\phi(n+1)\bigr)
\end{equation*}
is therefore recursively enumerable relative to $\phi$.  We may conclude that $\psi$ is nested because
\begin{equation*}
\nu\bigl(\psi(n+1)\bigr)\subseteq\nu\bigl(\psi(n)\bigr)\cap\nu\bigl(\phi(n+1)\bigr)\subseteq\nu\bigl(\psi(n)\bigr)
\end{equation*}
for all $n\in\mathbb{N}$, and that $\psi$ is an oracle for $x$ because $x\in\nu\bigl(\psi(0)\bigr)=\nu\bigl(\phi(0)\bigr)$ and
\begin{equation*}
x\in\nu\bigl(\psi(n+1)\bigr)\subseteq\nu\bigl(\psi(n)\bigr)\cap\nu\bigl(\phi(n+1)\bigr)\subseteq\nu\bigl(\phi(n+1)\bigr)
\end{equation*}
for all $n\in\mathbb{N}$.
\end{proof}

Define $\iota\cuhinterval{a}{b}$ to be the set of all real numbers $x$ such that $a<x<b$, and let $\mathcal{I}$ be the set of all $\iota\cuhinterval{a}{b}$ such that $a$ and $b$ are rational numbers with $a<b$.  The members of $\mathcal{I}$ are said to be \emph{rational intervals}.  Note that $\mathcal{I}$ is a basis for the standard topology of the real numbers.  Indeed, the oracles for real numbers that were introduced in Section~\ref{s:continuous} were nested oracles with basis $\mathcal{I}$ and coding $\iota$.  Another basis for the standard topology of the real numbers is the set $\mathcal{I}_{10,c}$ of \emph{decimal intervals} with accuracy factor $c$, where $c$ is a positive rational number, and where $\mathcal{I}_{10,c}$ is defined to be the set of all $\iota\cuhinterval{a}{b}$ such that
\begin{align*}
a&=\frac{m}{10^n}-\frac{c}{10^n} & b&=\frac{m+1}{10^n}+\frac{c}{10^n}
\end{align*}
for some integer $m$ and some positive integer $n$.  We call $n$ the \emph{number of digits of precision} of $\cuhinterval{a}{b}$.

Note that both $(\mathcal{I},\iota)$ and $(\mathcal{I}_{10,c}\,,\iota)$ have recursively enumerable subset relations.  The following theorem asserts that if $\phi$ is an oracle for a real number $x$ with basis $\mathcal{I}$ and coding $\iota$, then there exists an oracle $\psi$ for $x$ with basis $\mathcal{I}_{10,c}$ and coding $\iota$ that is recursive relative to $\phi$ uniformly.
\begin{cuhtheorem} \label{t:convert}
Let $(\mathcal{A},\nu)$ and $(\mathcal{B},\nu)$ be effective topologies with recursively enumerable subset relations such that $\mathcal{B}\subseteq\mathcal{A}$, and such that $\mathcal{A}$ and $\mathcal{B}$ are bases for the same topology.  If $\phi$ is an oracle for $x$ in $(\mathcal{A},\nu)$, then there exists an oracle $\psi$ for $x$ in $(\mathcal{B},\nu)$ that is recursive relative to $\phi$ uniformly.
\end{cuhtheorem}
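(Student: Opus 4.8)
The plan is to adapt the construction used in the proof of Theorem~\ref{t:complete}. The crucial observation is that although $x$ is available only through the oracle $\phi$, membership of $x$ in a basis element can be recursively enumerated relative to $\phi$: for $b\in\cuhdom_\mathcal{B}\nu$ one has $x\in\nu(b)$ if and only if there is an $m\in\mathbb{N}$ with $\nu\bigl(\phi(m)\bigr)\subseteq\nu(b)$, the forward direction holding because $\nu\bigl(\phi(\mathbb{N})\bigr)$ is a local basis for $x$ and $\nu(b)$ is open, and the reverse because each $\nu\bigl(\phi(m)\bigr)$ contains $x$. Since $\mathcal{B}\subseteq\mathcal{A}$ gives $\cuhdom_\mathcal{B}\nu\subseteq\cuhdom_\mathcal{A}\nu$, the recursively enumerable subset relation of $(\mathcal{A},\nu)$ can be applied to compare $\phi$-values with $\mathcal{B}$-codes, all of which lie in $\cuhdom_\mathcal{A}\nu$; and, as noted in the excerpt, $\cuhdom_\mathcal{B}\nu$ is itself recursively enumerable because $(\mathcal{B},\nu)$ has a recursively enumerable subset relation.

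Using this, I would define $\psi$ relative to $\phi$ by the following search: on input $n$, dovetail over all pairs $\langle b,m\rangle$ with $b\in\cuhdom_\mathcal{B}\nu$ and $m\in\mathbb{N}$ until one is found satisfying both $\nu\bigl(\phi(m)\bigr)\subseteq\nu(b)$ and $\nu(b)\subseteq\nu\bigl(\phi(n)\bigr)$, and then set $\psi(n)=b$. The search always halts: $x\in\nu\bigl(\phi(n)\bigr)$, which is open, so since $\mathcal{B}$ is a basis for the same topology there is a $B\in\mathcal{B}$ with $x\in B\subseteq\nu\bigl(\phi(n)\bigr)$; taking a code $b$ for $B$ and an $m$ with $\nu\bigl(\phi(m)\bigr)\subseteq B$---which exists since $\nu\bigl(\phi(\mathbb{N})\bigr)$ is a local basis---produces a witness. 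Since the same procedure works for every oracle $\phi$, the resulting $\psi$ is recursive relative to $\phi$ uniformly, and it maps $\mathbb{N}$ into $\cuhdom_\mathcal{B}\nu$.

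It remains to check that $\nu\bigl(\psi(\mathbb{N})\bigr)$ is a local basis for $x$. The first local-basis condition holds because $\psi(n)=b$ is chosen with $\nu\bigl(\phi(m)\bigr)\subseteq\nu(b)$ for some $m$, whence $x\in\nu\bigl(\phi(m)\bigr)\subseteq\nu\bigl(\psi(n)\bigr)$. For the second, let $B\in\mathcal{B}$ with $x\in B$; since $\mathcal{B}\subseteq\mathcal{A}$ and $\nu\bigl(\phi(\mathbb{N})\bigr)$ is a local basis for $x$ with basis $\mathcal{A}$, there is an $n$ with $\nu\bigl(\phi(n)\bigr)\subseteq B$, and then $\nu\bigl(\psi(n)\bigr)\subseteq\nu\bigl(\phi(n)\bigr)\subseteq B$ by the way $\psi(n)$ was chosen. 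I expect the main obstacle to be the opening observation: one cannot decide $x\in\nu(b)$ outright, but one can semi-decide it through $\phi$, and this is precisely what makes the otherwise naive search effective---the remainder is bookkeeping parallel to Theorems~\ref{t:complete} and~\ref{t:nest}.
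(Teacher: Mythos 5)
Your proposal is correct and follows essentially the same route as the paper's own proof: find, for each $n$, a pair $\langle m,b\rangle$ with $b\in\cuhdom_\mathcal{B}\nu$ and $\nu\bigl(\phi(m)\bigr)\subseteq\nu(b)\subseteq\nu\bigl(\phi(n)\bigr)$ by an effective search over the recursively enumerable subset relations, and then use the sandwich to verify that $\psi$ is an oracle for $x$. Your explicit verification of the second local-basis condition is slightly more detailed than the paper's, but the construction and the justification that the search halts are the same.
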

\begin{proof}
Suppose that $(\mathcal{A},\nu)$ and $(\mathcal{B},\nu)$ are effective topologies as described in the statement of the theorem, and that $\phi$ is an oracle for a point $x$ in $(\mathcal{A},\nu)$.  By definition,
\begin{equation*}
x\in\nu\bigl(\phi(n)\bigr)
\end{equation*}
for every $n\in\mathbb{N}$.  And because $\nu\bigl(\phi(n)\bigr)$ is an open set, it is a union of basis elements from $\mathcal{B}$.  Hence, there must exist a $B\in\mathcal{B}$ such that
\begin{equation*}
x\in B\subseteq\nu\bigl(\phi(n)\bigr)
\end{equation*}
But $\nu\bigl(\phi(\mathbb{N})\bigr)$ is a local basis for $x$, and $B$ is a basis element in $\mathcal{A}$, so there exists an $m\in\mathbb{N}$ such that $x\in\nu\bigl(\phi(m)\bigr)\subseteq B$.  Therefore, we have that for each $n\in\mathbb{N}$ there exist $B\in\mathcal{B}$ and $m\in\mathbb{N}$ such that
\begin{equation*}
x\in\nu\bigl(\phi(m)\bigr)\subseteq B\subseteq\nu\bigl(\phi(n)\bigr)
\end{equation*}
Now, since $(\mathcal{B},\nu)$ has a recursively enumerable subset relation, $\cuhdom_\mathcal{B}\nu$ is a recursively enumerable set.  Then, because $(\mathcal{A},\nu)$ also has a recursively enumerable subset relation, the set
\begin{equation*}
\bigl\{\,\langle m,b\rangle \bigm|\nu\bigl(\phi(m)\bigr)\subseteq\nu(b)\subseteq\nu\bigl(\phi(n)\bigr)\;\&\;m\in\mathbb{N}\;\&\;b\in\cuhdom_\mathcal{B}\nu\,\bigr\}
\end{equation*}
is recursively enumerable relative to $\phi$, for any $n\in\mathbb{N}$.  Therefore, there is a function $\psi:\mathbb{N}\to\cuhdom_\mathcal{B}\nu$, recursive relative to $\phi$, such that $\psi(n)=b$ for all $n\in\mathbb{N}$, where
\begin{equation*}
\nu\bigl(\phi(m)\bigr)\subseteq\nu(b)\subseteq\nu\bigl(\phi(n)\bigr)
\end{equation*}
for some $m\in\mathbb{N}$.  But this function $\psi$ is an oracle for $x$ in $(\mathcal{B},\nu)$, because
\begin{equation*}
x\in\nu\bigl(\phi(m)\bigr)\subseteq\nu\bigl(\psi(n)\bigr)\subseteq\nu\bigl(\phi(n)\bigr)
\end{equation*}
for all $n\in\mathbb{N}$.
\end{proof}

\section{Basic Representations of Sets}

\begin{cuhdef}
Let $(\mathcal{B},\nu)$ be an effective topology on a set $X$, and let $A$ be any subset of $X$.  We say that a set $R$ of non-negative integers is a \emph{basic\index{basic representations} representation} of $A$ in the effective topology $(\mathcal{B},\nu)$ if and only if the following two conditions hold.
\begin{enumerate}
\item  $R\subseteq\cuhdom_\mathcal{B}\nu$
\item \label{i:basic} $x\in A$ if and only if there exists a local basis $\mathcal{L}_x$ for $x$ with $\mathcal{L}_x\subseteq\nu(R)$.
\end{enumerate}
\end{cuhdef}
\noindent
Note that condition~\ref{i:basic} of the definition ensures that no two distinct sets in $(\mathcal{B},\nu)$ have the same basic representation.  Note further that if $R$ is a basic representation of $A$, then $\{\,A\cap\nu(r)\mid r\in R\,\}$ is a basis for the subspace topology on $A$, and this is an effective topology with coding $\lambda r\bigl[A\cap\nu(r)\bigr]$.

In an effective topology we use basic representations to represent sets of points, but not all sets of points have basic representations.  For example, there are $2^{2^{\aleph_0}}$ many sets of real numbers, but since a basic representation is a set of non-negative integers, there are at most $2^{\aleph_0}$ many basic representations.  Nevertheless, many commonly-studied sets have basic representations.\footnote{In the effective topology $(\mathcal{I},\iota)$, the set of rational numbers does not have a basic representation, but the set of irrational numbers has the basic representation
\begin{equation*}
R=\Bigl\{\,\cuhBinterval{\frac{m}{n!}}{\frac{m+1}{n!}}\Bigm|m\in\mathbb{Z}\;\&\;n\in\mathbb{N}\,\Bigr\}
\end{equation*}
where $\mathbb{Z}$ denotes the set of integers.  It is tempting to conjecture that the sets with basic representations in an effective topology $(\mathcal{B},\nu)$ are exactly the $G_\delta$ sets, but there is a trivial counterexample to this conjecture if the effective topology is not $T_1$.}
\begin{cuhtheorem}
Let $A$ be a set in an effective topology $(\mathcal{B},\nu)$.
\begin{enumerate}
\item If $A$ is an open set, then $A$ has a basic representation.
\item If $A$ is a closed set, then $A$ has a basic representation.
\end{enumerate}
\end{cuhtheorem}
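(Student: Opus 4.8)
The plan is to exhibit an explicit basic representation in each case, built directly from the topological data, and then verify the two defining conditions. For an open set $A$, I would take
\[
R=\{\,b\in\cuhdom_\mathcal{B}\nu\mid\nu(b)\subseteq A\,\},
\]
the set of all codes of basis elements lying inside $A$; for a closed set $A$, I would take
\[
R=\{\,b\in\cuhdom_\mathcal{B}\nu\mid\nu(b)\cap A\neq\emptyset\,\},
\]
the set of all codes of basis elements that meet $A$. In each case $R\subseteq\cuhdom_\mathcal{B}\nu$ by construction, so condition~(1) is immediate and all the work lies in condition~(2).

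For the open case, if $x\in A$ I would show that $\mathcal{L}_x=\{\,B\in\mathcal{B}\mid x\in B\subseteq A\,\}$ is a local basis for $x$ contained in $\nu(R)$: the first local-basis condition is trivial, and for the second, given any $B\in\mathcal{B}$ with $x\in B$, the set $B\cap A$ is open and contains $x$, hence contains a basis element $L$ with $x\in L$, and then $L\in\mathcal{L}_x$ with $L\subseteq B$. Conversely, if some local basis $\mathcal{L}_x\subseteq\nu(R)$ exists, then — since $\mathcal{B}$ covers $X$, every local basis is nonempty — pick any $\nu(b)\in\mathcal{L}_x$ with $b\in R$; then $x\in\nu(b)\subseteq A$, so $x\in A$.

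For the closed case, if $x\in A$ then the full local basis $\mathcal{L}_x=\{\,B\in\mathcal{B}\mid x\in B\,\}$ already lies in $\nu(R)$, since every basis element containing $x$ meets $A$ at the point $x$ itself. Conversely, suppose a local basis $\mathcal{L}_x\subseteq\nu(R)$ exists but $x\notin A$; then $x$ lies in the open set $X\setminus A$, so there is a basis element $B$ with $x\in B\subseteq X\setminus A$, and by the refinement clause some $L\in\mathcal{L}_x$ satisfies $L\subseteq B$, whence $L\cap A=\emptyset$, contradicting $L\in\nu(R)$. Hence $x\in A$.

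The only genuinely nonroutine step is the verification that the candidate $\mathcal{L}_x$ in the open case satisfies the refinement clause of the definition of a local basis, which is exactly where the defining property of a basis is used (that the open set $B\cap A$ is a union of basis elements); the closed case is then essentially dual, trading ``$\nu(b)\subseteq A$'' for ``$\nu(b)\cap A\neq\emptyset$'' and running the analogous argument on the complement. Beyond that I expect only bookkeeping with the coding $\nu$ — in particular noting that, because $\nu$ need not be injective, $\nu(R)$ is exactly $\{\,B\in\mathcal{B}\mid B\subseteq A\,\}$ in the open case (respectively $\{\,B\in\mathcal{B}\mid B\cap A\neq\emptyset\,\}$ in the closed case).
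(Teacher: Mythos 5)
Your proposal is correct and uses exactly the same two basic representations as the paper ($\{\,b\mid\nu(b)\subseteq A\,\}$ for open $A$ and $\{\,b\mid\nu(b)\cap A\neq\emptyset\,\}$ for closed $A$), with the same verification of condition~(2) in each case. The only difference is cosmetic: you spell out why local bases are nonempty in the open case, a point the paper leaves implicit.
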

\begin{proof}
Suppose that $A$ is an open set in the effective topology $(\mathcal{B},\nu)$ and let
\begin{equation*}
R=\{\,r\in\cuhdom_\mathcal{B}\nu\mid\nu(r)\subseteq A\,\}
\end{equation*}
Clearly, $R\subseteq\cuhdom_\mathcal{B}\nu$ and if $x\notin A$ then there does not exist a local basis $\mathcal{L}_x$ for $x$ with $\mathcal{L}_x\subseteq\nu(R)$, since no member of $\nu(R)$ contains $x$.  Alternatively, if $x\in A$ then, by the definition of a basis, for each basis element $B_1$ that contains $x$ there exists some basis element $B_2$ such that $x\in B_2\subseteq B_1\cap A$.  That is, if $x\in A$ then for each basis element $B_1$ with $x\in B_1$, there exists a basis element $B_2\in\{\,B\in\mathcal{B}\mid x\in B\subseteq A\,\}$ with $B_2\subseteq B_1$.  It immediately follows that $\mathcal{L}_x=\{\,B\in\mathcal{B}\mid x\in B\subseteq A\,\}$ is a local basis for $x$ and $\mathcal{L}_x\subseteq \nu(R)$.  By definition, $R$ is a basic representation of $A$.

Now, if $A$ is a closed set in $(\mathcal{B},\nu)$, then let
\begin{equation*}
R=\{\,r\in\cuhdom_\mathcal{B}\nu\mid A\cap\nu(r)\ne\varnothing\,\}
\end{equation*}
Clearly, $R\subseteq\cuhdom_\mathcal{B}\nu$ and if $x\in A$ then there exists a local basis $\mathcal{L}_x$ for $x$ with $\mathcal{L}_x\subseteq\nu(R)$.  Namely, $\mathcal{L}_x$ is the set of all basis elements that contain $x$.  Alternatively, if $x\notin A$, then since $A$ is closed, every local basis $\mathcal{L}_x$ for $x$ contains a basis element that does not intersect $A$.  Therefore,  $\mathcal{L}_x\nsubseteq\nu(R)$.   We may conclude, by definition, that $R$ is a basic representation of $A$.
\end{proof}

Although we use a basic representation $R$ to represent a set of points in an effective topology, the following theorem demonstrates that there is, in general, no effective procedure (relative to $R$) for finding oracles for those points.  Nevertheless, if we restrict our attention to certain special classes of basic representations $R$, then effective procedures do exist.  See Section~\ref{s:kreisel}.
\begin{cuhtheorem} \label{t:broracle}
Let $\mathcal{B}$ be a countable basis for the standard topology of $\mathbb{R}^n$ and let $\nu$ be a coding for the basis.  Then there does not exist a partial recursive function $\phi$ satisfying the condition that for every singleton set $\{x\}\subseteq\mathbb{R}^n$ and for every basic representation $R_x$ of $\{x\}$ in the effective topology $(\mathcal{B},\nu)$, the function $\lambda m\bigl[\phi(R_x,m)\bigr]$ is an oracle for $x$ in $(\mathcal{B},\nu)$.
\end{cuhtheorem}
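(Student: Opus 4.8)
The plan is a short contradiction argument resting on the finiteness of halting oracle computations. Suppose such a partial recursive $\phi$ existed. First I would feed $\phi$ a deliberately uninformative basic representation. Fix the origin $0\in\mathbb{R}^n$ and choose basis elements $C_0\supseteq C_1\supseteq C_2\supseteq\cdots$ with $0\in C_k$ and $\operatorname{diam}(C_k)\to0$; since $\mathcal{B}$ is a basis for the standard topology, $\{C_k\mid k\in\mathbb{N}\}$ is a local basis for $0$ and $\bigcap_k C_k=\{0\}$. Taking $R$ to be a set of $\nu$-codes for the $C_k$, one checks that $R$ is a basic representation of $\{0\}$: $\nu(R)$ contains the local basis $\{C_k\}$ for $0$, and it contains no local basis for any $y\neq0$, because only finitely many $C_k$ contain $y$. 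By hypothesis $\lambda m\bigl[\phi(R,m)\bigr]$ is an oracle for $0$; in particular it is total, and $\nu\bigl(\{\phi(R,m)\mid m\in\mathbb{N}\}\bigr)$ is a local basis for $0$, so there is some $m_0$ with $\nu\bigl(\phi(R,m_0)\bigr)\neq\mathbb{R}^n$ (a local basis for $0$ must contain sets lying in arbitrarily small balls). Put $c_0=\phi(R,m_0)$ and pick a point $x'\notin\nu(c_0)$; since $0\in\nu(c_0)$ we have $x'\neq0$.

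Next I would exploit that the computation of $\phi(R,m_0)$, being convergent, queries its oracle $R$ at only finitely many indices, forming a finite set $Q$. I would then build a new basic representation $R'=(R\cap Q)\cup E'$, where $E'$ is a set of $\nu$-codes, all lying outside $Q$, for a nested family $D_0\supseteq D_1\supseteq\cdots$ of basis elements with $x'\in D_l$, $\operatorname{diam}(D_l)\to0$, and $\bigcap_l D_l=\{x'\}$; such codes exist because only finitely many basis elements are coded by members of $Q$ while arbitrarily small basis neighborhoods of $x'$ are available. By construction $R'\cap Q=R\cap Q$, so the deterministic computation of $\phi(R',m_0)$ makes exactly the same sequence of queries, receives exactly the same answers, and therefore halts with $\phi(R',m_0)=c_0$. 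Since every member of a local basis for $x'$ must contain $x'$ but $x'\notin\nu(c_0)$, the image $\nu\bigl(\{\phi(R',m)\mid m\in\mathbb{N}\}\bigr)$ is not a local basis for $x'$, so $\lambda m\bigl[\phi(R',m)\bigr]$ is not an oracle for $x'$---contradicting the assumed property of $\phi$, provided that $R'$ really is a basic representation of $\{x'\}$.

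That last verification is the step I expect to be the main obstacle: I must ensure that padding $R'$ with the finitely many extra basis elements $\nu(k)$ for $k\in R\cap Q$ (all of which contain $0$, not $x'$) does not inadvertently place a local basis for some unintended point $y$ inside $\nu(R')$. I plan to settle this by the same finiteness observation used for $R$: $\nu(R')$ consists of finitely many of the $C_k$ together with the nested family $\{D_l\}$, and since $\bigcap_l D_l=\{x'\}$, every $y\neq x'$ lies in only finitely many of the $D_l$; hence $\nu(R')$ contains only finitely many sets that contain $y$, and finitely many sets can never form a local basis for $y$, since a local basis must contain a member inside every neighborhood of $y$, in particular inside arbitrarily small ones. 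Thus $\nu(R')$ harbors a local basis for $x'$ and for no other point, so $R'$ is a basic representation of $\{x'\}$, completing the contradiction. The argument uses nothing special about $\mathbb{R}^n$ beyond its being a $T_1$ space with a countable basis in which every point has basis neighborhoods of arbitrarily small diameter.
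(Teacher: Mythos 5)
Your proof is correct and follows essentially the same strategy as the paper's: both arguments hinge on the fact that a halting computation queries its set-oracle at only finitely many indices, so the basic representation can be altered outside that finite queried set to code a different point without changing the output, contradicting the $T_1$ separation of distinct points. The differences are cosmetic---the paper starts from an arbitrary point, an arbitrary second point, and an arbitrary basic representation, invoking $T_1$ directly to find the distinguishing output index, whereas you fix a concrete nested representation of the origin and choose the second point $x'$ after inspecting the output---and your explicit check that the modified set is still a basic representation of $\{x'\}$ carefully fills in a step the paper leaves largely implicit.
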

\begin{proof}
Let $(\mathcal{B},\nu)$ be an effective topology as in the statement of the theorem and suppose, as an assumption to be shown contradictory, that there exists a partial recursive function $\phi$ satisfying the condition that for every singleton set $\{x\}\subseteq\mathbb{R}^n$ and for every basic representation $R_x$ of $\{x\}$ in the effective topology $(\mathcal{B},\nu)$, the function $\lambda m\bigl[\phi(R_x,m)\bigr]$ is an oracle for $x$ in $(\mathcal{B},\nu)$.  Now consider any two distinct points $x\in\mathbb{R}^n$ and $y\in\mathbb{R}^n$, and let $R_x$ be a basic representation of $\{x\}$ in $(\mathcal{B},\nu)$.  Since the standard topology of $\mathbb{R}^n$ is $T_1$, there must exist a non-negative integer $k$ such that $\phi(R_x,k)$ is defined and
\begin{equation*}
y\notin\nu\bigl(\phi(R_x,k)\bigr)
\end{equation*}
Next, choose a program for computing $\phi$.  Note that since the computation for $\phi(R_x,k)$ has only finitely many steps, only finitely many non-negative integers are tested for membership in $R_x$ during the course of the computation.  Let $C$ be the collection of all $i\in\mathbb{N}$ such that $i\in R_x$ and such that $i$ is tested for membership in $R_x$ during the course of the computation of $\phi(R_x,k)$.  Similarly, let $D$ be the collection of all $i\in\mathbb{N}$ such that $i\notin R_x$ and such that $i$ is tested for membership in $R_x$ during the course of the computation of $\phi(R_x,k)$.

Now, choose any oracle $\psi$ for $y$ in $(\mathcal{B},\nu)$.  Note that $\psi(\mathbb{N})$ is a basic representation for $\{y\}$ in $(\mathcal{B},\nu)$.  And because $C$ and $D$ are finite sets,
\begin{equation*}
R_y=\bigl(\psi(\mathbb{N})\cup C\bigr)-D
\end{equation*}
is also a basic representation for $\{y\}$ in $(\mathcal{B},\nu)$.  It follows that $\phi(R_x,k)=\phi(R_y,k)$, because whenever $i$ is tested for membership in $R_x$ during the course of the computation of $\phi(R_x,k)$, $i\in R_x$ if and only if $i\in R_y$.  Therefore,
\begin{equation*}
y\notin\nu\bigl(\phi(R_x,k)\bigr)=\nu\bigl(\phi(R_y,k)\bigr)
\end{equation*}
But by the definition of $\phi$, $\lambda m\bigl[\phi(R_y,m)\bigr]$ is an oracle for $y$.  Hence,
\begin{equation*}
y\in\nu\bigl(\phi(R_y,k)\bigr)
\end{equation*}
This is a contradiction, so the assumption must be false.  The partial recursive function $\phi$ does not exist.
\end{proof}

\section{Basic Representations of Physical Models}

Let $\mathbb{R}$ be the set of all real numbers.  For any two sets $A$ and $B$, let $A\times B=\{\,(a,b)\mid a\in A\;\&\;b\in B\,\}$ be the Cartesian product of $A$ with $B$.  We write $A^k$ to denote the set formed by taking the Cartesian product of $A$ with itself $k$ many times.  For example, $A^3=(A\times A)\times A$.  As with Cantor's pairing function, $(a,b,c)$ is an abbreviation for $((a,b),c)$, and so on.  Similarly, we define the \emph{Cartesian projection function} $\varpi_i^n$ so that $\varpi_i^n(x_1,x_2,\ldots,x_n)=x_i$ for each positive integer $i\leq n$.

A physical model $(S,A)$ with finitely many observable quantities is said to be in \emph{normal form}\index{physical models!normal forms of} if and only if $S\subseteq\mathbb{R}^n$ and $A=\{\varpi_1^n,\varpi_2^n,\ldots,\varpi_n^n\}$.
\begin{cuhtheorem}
The following two conditions hold for any physical model $(S,A)$ with finitely many observable quantities.
\begin{enumerate}
\item \label{i:obseq} $(S,A)$ is observationally equivalent to a physical model in normal form.
\item \label{i:isomorph} $(S,A)$ is isomorphic to a physical model in normal form if and only if $(S,A)$ is a reduced physical model.
\end{enumerate}
\end{cuhtheorem}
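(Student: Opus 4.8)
The plan is to handle both parts by means of one construction. Given a physical model $(S,A)$ with its observable quantities enumerated as $A=\{\alpha_1,\dots,\alpha_n\}$, define $\phi:S\to\mathbb{R}^n$ by $\phi(s)=\bigl(\alpha_1(s),\dots,\alpha_n(s)\bigr)$, put $T=\phi(S)\subseteq\mathbb{R}^n$ and $B=\{\varpi_1^n,\dots,\varpi_n^n\}$, and let $\psi$ be the map sending each $\alpha_i$ to $\varpi_i^n$. Then $(T,B)$ is a physical model in normal form, and $\varpi_i^n\bigl(\phi(s)\bigr)=\alpha_i(s)$ is exactly the compatibility equation $\alpha(s)=\psi(\alpha)\bigl(\phi(s)\bigr)$. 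The map $\phi:S\to T$ is surjective by construction, and $\psi$ is a bijection from $A$ onto $B$: indeed $\varpi_i^n$ and $\varpi_j^n$ restrict to the same function on $T=\phi(S)$ if and only if $\alpha_i$ and $\alpha_j$ agree on all of $S$, i.e.\ if and only if $\alpha_i=\alpha_j$ as members of $A$, so $\alpha_i\mapsto\varpi_i^n$ is well defined, surjective onto $B$, and injective regardless of any repetitions in the enumeration of $A$. (The degenerate cases in which $S$ or $A$ is empty are checked directly.) Hence $(\phi,\psi)$ is an epimorphism from $(S,A)$ to the normal-form model $(T,B)$.

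From this, part~(\ref{i:obseq}) follows quickly: every normal-form model is reduced, since two distinct points of $T\subseteq\mathbb{R}^n$ differ in some coordinate $i$ and are therefore separated by $\varpi_i^n\in B$; and the identity pair is trivially an epimorphism from $(T,B)$ to itself. So $(S,A)$ and the normal-form model $(T,B)$ both admit epimorphisms onto the reduced model $(T,B)$, which is the definition of observational equivalence. For the ``if'' direction of part~(\ref{i:isomorph}), observe that if $(S,A)$ is reduced then $\phi$ is injective, hence a bijection $S\to T$, and the very same pair $(\phi,\psi)$ becomes an isomorphism from $(S,A)$ onto the normal-form model $(T,B)$. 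For the ``only if'' direction, suppose $(S,A)$ is isomorphic to a normal-form model $(T,B)$; since $(T,B)$ is reduced by the coordinate argument above, and since ``is isomorphic to'' is a symmetric relation (invert both bijections and rewrite the compatibility equation), Theorem~\ref{t:reduced} applied with the two models in the opposite roles gives that $(S,A)$ is reduced.

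The only step needing real care is the claim that $\psi$ is a genuine \emph{bijection} $A\to B$, rather than merely a surjection: this is where one must argue that $\varpi_i^n$ and $\varpi_j^n$ coincide on $\phi(S)$ precisely when $\alpha_i=\alpha_j$, and where one must dispose of the possibilities that $S$ or $A$ is empty. The remaining ingredients---the compatibility equation, the surjectivity of $\phi$, the fact that normal-form models are reduced, and the symmetry of isomorphism needed to invoke Theorem~\ref{t:reduced}---are all routine.
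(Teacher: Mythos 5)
Your proof is correct and follows essentially the same route as the paper: the same tuple map $\phi(s)=\bigl(\alpha_1(s),\ldots,\alpha_n(s)\bigr)$ onto $T=\phi(S)$ with $B$ the projections, the observation that normal-form models are reduced, the identity epimorphism on $(T,B)$ for observational equivalence, injectivity of $\phi$ from reducedness for the isomorphism, and Theorem~\ref{t:reduced} for the converse. The only difference is that you explicitly verify that $\psi:\alpha_i\mapsto\varpi_i^n$ is a well-defined bijection (and handle the empty cases), a point the paper's proof passes over silently; this is a welcome refinement, not a different argument.
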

\begin{proof}
Begin by noting that if $(T,B)$ is a physical model in normal form, and if $t_1\ne t_2$ for any $t_1\in T$ and $t_2\in T$, then $\varpi_i^n(t_1)\ne\varpi_i^n(t_2)$ for some positive integer $i\leq n$.  Therefore, by definition, every physical model in normal form is a reduced physical model.  It immediately follows from Theorem~\ref{t:reduced} that if a physical model $(S,A)$ is isomorphic to a physical model in normal form, then $(S,A)$ is a reduced physical model.

To prove condition~\ref{i:obseq}, suppose that $(S,A)$ is a physical model such that $A=\{\alpha_1,\alpha_2,\ldots,\alpha_n\}$.  Define
\begin{equation*}
T=\bigl\{\,\bigl(\alpha_1(s),\alpha_2(s),\ldots,\alpha_n(s)\bigr)\bigm|s\in S\,\bigr\}
\end{equation*}
and let $B=\{\varpi_1^n,\varpi_2^n,\ldots,\varpi_n^n\}$.  Note that $(T,B)$ is a physical model in normal form.  Also note that the function $\phi:S\to T$ given by
\begin{equation*}
\phi(s)=\bigl(\alpha_1(s),\alpha_2(s),\ldots,\alpha_n(s)\bigr)
\end{equation*}
is a surjection, and that $\alpha_i(s)=\varpi_i^n\bigl(\phi(s)\bigr)$ for all $s\in S$ and all positive integers $i\leq n$.  Therefore, there is an epimorphism from $(S,A)$ to the reduced physical model $(T,B)$.  Trivially, there is also an epimorphism from $(T,B)$ to itself.  We may conclude that $(S,A)$ is observationally equivalent to $(T,B)$.

To prove condition~\ref{i:isomorph}, consider the special case where $(S,A)$ is a reduced physical model.  Because $(S,A)$ is reduced, we have that if $s_1\ne s_2$ for any $s_1\in S$ and $s_2\in S$, then there exists a positive integer $i\leq n$ such that $\alpha_i(s_1)\ne\alpha_i(s_2)$.  This implies that if $s_1\ne s_2$ then $\phi(s_1)\ne\phi(s_2)$.  Hence, $\phi$ is an injection.  Since $\phi$ is also a surjection, $\phi$ is a bijection.  Therefore, if $(S,A)$ is a reduced physical model, then $(S,A)$ and $(T,B)$ are isomorphic.  We have already proved the converse, that if $(S,A)$ is isomorphic to a physical model in normal form, then $(S,A)$ is a reduced physical model.  Hence, condition~\ref{i:isomorph} holds.
\end{proof}

The notion of a basic representation of a set can be generalized so that we may speak of basic representations of physical models in normal form.  Given a physical model $(S,A)$ in normal form with $A=\{\varpi_1^n,\varpi_2^n,\ldots,\varpi_n^n\}$, we may choose sets $X_1$, $X_2$,~$\ldots$~,~$X_n$ such that $\varpi_i^n(S)\subseteq X_i\subseteq\mathbb{R}$ for each positive integer $i\leq n$, and we may impose effective topologies $(\mathcal{B}_1,\nu_1)$, $(\mathcal{B}_2,\nu_2)$,~$\ldots$~,~$(\mathcal{B}_n,\nu_n)$ on these sets.\footnote{If $(S,A)$ is faithful,\index{physical models!faithful} then the bases for these topologies are uniquely determined by the physical operations used to measure each of the observable quantities.  For example, if an observable quantity is an angle measurement, then the corresponding topology is the topology of a circle, and each basis element corresponds to a particular reading on the instrument that is used to measure angles.  The idea that basis elements correspond to the values of measurements appears to have originated with reference~\cite{WZ02}.}  Define
\begin{equation*}
(\mathcal{B}_1,\nu_1)\otimes(\mathcal{B}_2,\nu_2)\otimes\cdots\otimes(\mathcal{B}_n,\nu_n)
\end{equation*}
to be the effective topology with basis $\mathcal{B}$ such that
\begin{equation*}
\mathcal{B}=\{\,B_1\times B_2\times\cdots\times B_n\mid B_1\in\mathcal{B}_1\;\&\;B_2\in\mathcal{B}_2\;\&\;\cdots\;\&\;B_n\in\mathcal{B}_n\,\}
\end{equation*}
and with coding $\nu$ such that
\begin{equation*}
\nu\langle a_1,a_2,\ldots,a_n\rangle=\nu_1(a_1)\times\nu_2(a_2)\times\cdots\times\nu_n(a_n)
\end{equation*}
We call $(\mathcal{B},\nu)$ the \emph{effective\index{effective product} product} of $(\mathcal{B}_1,\nu_1)$, $(\mathcal{B}_2,\nu_2)$,~$\ldots$~,~$(\mathcal{B}_n,\nu_n)$.  Note that $S$ is a set of points in the effective topology $(\mathcal{B},\nu)$.  A physical model $(R,H)$ is said to be a \emph{basic\index{basic representations}\index{physical models!basic representations of} representation} of the physical model $(S,A)$ if $R$ is a basic representation of $S$ in the effective topology $(\mathcal{B},\nu)$ and if $H$ is the set $\{\pi_1^n,\pi_2^n,\ldots,\pi_n^n\}$ of projection functions.

For example, the non-discrete continuous computable physical model of planetary motion (Model~\ref{m:continuous}) is a basic representation of the simple model of planetary motion (Model~\ref{m:real}).  In particular, Model~\ref{m:continuous} is obtained by imposing the effective topology $(\mathcal{I}_{10,c}\,,\iota)$ on the time in Model~\ref{m:real}, where $c=\frac{1}{10}$, and by imposing the effective topology described in Footnote~\ref{n:angle} on the angular position in Model~\ref{m:real}.  The product of these topologies is the topology for the surface of a cylinder.  The states of Model~\ref{m:real} are a spiral path on the surface of that cylinder, and the set of states of Model~\ref{m:continuous} is a basic representation of the path.

\section{Data and Predictions}

In order to make predictions, we are often interested in finding the set of all states of a physical model which could account for a given collection of simultaneous measurements.  That is, given a physical model $(S,A)$ with $A=\{\alpha_1,\alpha_2,\alpha_3,\ldots\}$, and given real numbers $x_1$, $x_2$,~$\ldots$~,~$x_k$, we are interested in the set
\begin{equation*}
P=\{\,s\in S\mid\alpha_1(s)=x_1\;\&\;\alpha_2(s)=x_2\;\&\;\cdots\;\&\;\alpha_k(s)=x_k\,\}
\end{equation*}
In this context, the real numbers $x_1$, $x_2$,~$\ldots$~,~$x_k$ are said to be the \emph{data}, and $P$ is the corresponding set of states \emph{predicted} by the model.

The following theorem shows that if we are given a basic representation of a physical model $(S,A)$ in normal form, together with complete oracles for the real numbers $x_1$, $x_2$,~$\ldots$~,~$x_k$, then there is an effective procedure for finding a basic representation of the set $P$, provided that the underlying topology is $T_1$.  (This is a rather weak requirement, since almost all topologies with practical applications in the sciences are $T_1$.)
\begin{cuhtheorem} \label{t:brpredict}
Let $(S,A)$ be a physical model in normal form with $A=\{\varpi_1^n,\varpi_2^n,\linebreak[0]\ldots,\varpi_n^n\}$ and let $(R,H)$ be a basic representation of $(S,A)$ in a $T_1$ effective topology
\begin{equation*}
(\mathcal{B},\nu)=(\mathcal{B}_1,\nu_1)\otimes(\mathcal{B}_2,\nu_2)\otimes\cdots\otimes(\mathcal{B}_n,\nu_n)
\end{equation*}
If $k\leq n$ and $\phi_1$, $\phi_2$,~$\ldots$~,~$\phi_k$ are complete oracles for $x_1$, $x_2$,~$\ldots$~,~$x_k$ in the effective topologies $(\mathcal{B}_1,\nu_1)$, $(\mathcal{B}_2,\nu_2)$,~$\ldots$~,~$(\mathcal{B}_k,\nu_k)$, then there is a basic representation of
\begin{equation*}
P=\{\,s\in S\mid\varpi_1^n(s)=x_1\;\&\;\varpi_2^n(s)=x_2\;\&\;\cdots\;\&\;\varpi_k^n(s)=x_k\,\}
\end{equation*}
in $(\mathcal{B},\nu)$ that is recursively enumerable relative to $R$, $\phi_1$, $\phi_2$,~$\ldots$~,~$\phi_k$ uniformly.
\end{cuhtheorem}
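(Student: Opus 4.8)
The plan is to write down an explicit candidate and then verify both that it is a basic representation of $P$ and that it has the claimed effectivity. Given the basic representation $(R,H)$ of $(S,A)$, I would set
\begin{equation*}
R'=\bigl\{\,\langle a_1,a_2,\ldots,a_n\rangle\in R\bigm|x_1\in\nu_1(a_1)\;\&\;x_2\in\nu_2(a_2)\;\&\;\cdots\;\&\;x_k\in\nu_k(a_k)\,\bigr\}
\end{equation*}
The first condition in the definition of a basic representation holds immediately, since $R'\subseteq R\subseteq\cuhdom_\mathcal{B}\nu$; here I use that every $r\in\cuhdom_\mathcal{B}\nu$ has the form $\langle a_1,\ldots,a_n\rangle$ with each $a_i\in\cuhdom_{\mathcal{B}_i}\nu_i$ and $\nu(r)=\nu_1(a_1)\times\cdots\times\nu_n(a_n)$, so the condition imposed on $R'$ is meaningful. (If $S=\varnothing$ the theorem is trivial, so I would assume $S\neq\varnothing$, which makes each factor space nonempty.)

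For the second condition I would argue the two implications separately. If $y=(y_1,\ldots,y_n)\in P$, then $y\in S$, so there is a local basis $\mathcal{L}_y$ for $y$ with $\mathcal{L}_y\subseteq\nu(R)$; for each $L\in\mathcal{L}_y$, writing $L=\nu\langle a_1,\ldots,a_n\rangle$ with $\langle a_1,\ldots,a_n\rangle\in R$, the containment $y\in L$ forces $x_i=y_i\in\nu_i(a_i)$ for $i\leq k$, so $\langle a_1,\ldots,a_n\rangle\in R'$; hence the very same $\mathcal{L}_y$ satisfies $\mathcal{L}_y\subseteq\nu(R')$. Conversely, suppose $\mathcal{L}_y\subseteq\nu(R')$ is a local basis for a point $y$. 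Since $R'\subseteq R$ we get $\mathcal{L}_y\subseteq\nu(R)$, hence $y\in S$; it remains to show $y_i=x_i$ for $i\leq k$. I would verify that the coordinate projections $\{\,L_i\mid L\in\mathcal{L}_y\,\}$ form a local basis for $y_i$ in $(\mathcal{B}_i,\nu_i)$ — this uses that the members of $\mathcal{B}$ are boxes, that any basis element of $\mathcal{B}_i$ around $y_i$ can be padded (with arbitrary basis elements in the other coordinates) to a box of $\mathcal{B}$ around $y$, and that a nonempty box contained in a box is contained factorwise. By construction every such $L_i$ contains $x_i$. Since $(\mathcal{B},\nu)$ is $T_1$ and all factors are nonempty, each $(\mathcal{B}_i,\nu_i)$ is $T_1$, so if $x_i\neq y_i$ there would be a basis element of $\mathcal{B}_i$ containing $y_i$ but not $x_i$, which could not be refined by any $L_i$ — a contradiction. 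Therefore $y\in P$, and $R'$ is a basic representation of $P$.

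It remains to check that $R'$ is recursively enumerable relative to $R,\phi_1,\ldots,\phi_k$ uniformly. Membership $r\in R$ is decidable relative to the oracle $R$. For $r=\langle a_1,\ldots,a_n\rangle\in R$ each $a_i$ lies in $\cuhdom_{\mathcal{B}_i}\nu_i$, and I claim $x_i\in\nu_i(a_i)$ if and only if $a_i\in\phi_i(\mathbb{N})$: if $a_i=\phi_i(m)$ for some $m$ then $\nu_i(a_i)$ belongs to the local basis $\nu_i(\phi_i(\mathbb{N}))$ of $x_i$, so $x_i\in\nu_i(a_i)$; conversely, since $\phi_i$ is a \emph{complete} oracle, $x_i\in\nu_i(a_i)$ forces $a_i\in\phi_i(\mathbb{N})$. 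Each $\phi_i(\mathbb{N})$ is recursively enumerable relative to $\phi_i$, so a dovetailed procedure that runs through all $r$, queries whether $r\in R$, and then semidecides, for every $i\leq k$, that $a_i\in\phi_i(\mathbb{N})$, outputting $r$ once all these searches succeed, enumerates exactly $R'$ by a fixed algorithm independent of the particular inputs. The main obstacle is the converse implication in the verification of the second condition above — the reduction of an $n$-dimensional local basis of boxes for $y$ to local bases for the individual coordinates, together with the essential appeal to the $T_1$ hypothesis; this is precisely where a non-$T_1$ topology would permit a spurious point with $y_i\neq x_i$ to slip into the represented set.
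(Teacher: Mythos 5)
Your proposal is correct and follows essentially the same route as the paper: your $R'$ coincides (via the completeness of the oracles, exactly as you argue) with the set $Q=\{\,r\in R\mid(\forall i\leq k)(\exists m)\,[\pi_i^n(r)=\phi_i(m)]\,\}$ that the paper uses, and the verification that it is a basic representation of $P$ proceeds by the same two implications, with the same appeal to the $T_1$ hypothesis to force $\varpi_i^n(s)=x_i$. You merely supply a bit more detail than the paper on two points it asserts without proof — that coordinate projections of a local basis of boxes give local bases for the coordinates, and that the dovetailed search makes the set recursively enumerable relative to $R,\phi_1,\ldots,\phi_k$ uniformly.
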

\begin{proof}
Let the variables be defined as in the statement of the theorem and note that the set
\begin{equation*}
Q=\bigl\{\,r\in R\bigm|\bigl(\forall\,i\in\{1,2,\ldots,k\}\bigr)\bigl(\exists\,m\in\mathbb{N}\bigr)\bigl[\pi_i^n(r)=\phi_i(m)\bigr]\,\bigr\}
\end{equation*}
is recursively enumerable relative to $R$, $\phi_1$, $\phi_2$,~$\ldots$~,~$\phi_k$ uniformly.  (In fact, $Q$ is recursively enumerable relative to $\xi$, $\phi_1$, $\phi_2$,~$\ldots$~,~$\phi_k$ uniformly, where $\xi$ is a function that merely enumerates the members of $R$.)  We claim that $Q$ is a basic representation of $P$ in $(\mathcal{B},\nu)$.  Since $Q\subseteq R\subseteq\cuhdom_\mathcal{B}\nu$,
it suffices to prove that $s\in P$ if and only if there exists a local basis $\mathcal{L}_s$ for $s$ with $\mathcal{L}_s\subseteq\nu(Q)$.
Or equivalently, it suffices to prove that $s\in P$ if and only if there exists an oracle $\psi$ for $s$ with $\psi(\mathbb{N})\subseteq Q$.

Suppose $s\in P$.  Because $R$ is a basic representation of $S$ in $(\mathcal{B},\nu)$, there is an oracle $\psi$ for $s$ in $(\mathcal{B},\nu)$ such that $\psi(\mathbb{N})\subseteq R$.  Moreover, for each positive integer $i\leq k$, the set $\nu_i\bigl(\pi_i^n\bigl(\psi(\mathbb{N})\bigr)\bigr)$ is a local basis for $\varpi_i^n(s)=x_i$.  And since $\phi_i$ is a complete oracle for $x_i$, we have that $\pi_i^n\bigl(\psi(\mathbb{N})\bigr)\subseteq\phi_i(\mathbb{N})$.  Hence, if $r=\psi(l)$ for some $l\in\mathbb{N}$, then there exists an $m\in\mathbb{N}$ such that $\pi_i^n(r)=\phi_i(m)$.  Therefore, by the definition of $Q$, $\psi(\mathbb{N})\subseteq Q$.

Conversely, suppose that $\psi$ is an oracle for some point $s$ in $(\mathcal{B},\nu)$, and that $\psi(\mathbb{N})\subseteq Q$.  Then, for each positive integer $i\leq k$, we have that $\pi_i^n\bigl(\psi(\mathbb{N})\bigr)\subseteq\phi_i(\mathbb{N})$.  Of course, $\nu_i\bigl(\pi_i^n\bigl(\psi(\mathbb{N})\bigr)\bigr)$ is a local basis for $\varpi_i^n(s)$ because $\nu\bigl(\psi(\mathbb{N})\bigr)$ is a local basis for $s$.  And by the definition of $\phi_i$, $\nu_i\bigl(\phi_i(\mathbb{N})\bigr)$ is a local basis for $x_i$.  Hence, a local basis for $\varpi_i^n(s)$ is a subset of a local basis for $x_i$ in the effective topology $(\mathcal{B}_i,\nu_i)$.  But because $(\mathcal{B},\nu)$ is a $T_1$ effective topology, $(\mathcal{B}_i,\nu_i)$ is also $T_1$.  In a $T_1$ topology, local bases for any two distinct points $z_1$ and $z_2$ must contain basis elements $B_1$ and $B_2$, respectively, such that $z_2\notin B_1$ and $z_1\notin B_2$.  Therefore, since a local basis for $\varpi_i^n(s)$ is a subset of a local basis for $x_i$, it must be the case that $\varpi_i^n(s)=x_i$.  We may conclude, by the definition of $P$, that $s\in P$.
\end{proof}

A set $S\subseteq\mathbb{R}^n$ is said to be the \emph{graph} of a function $\psi:\mathbb{R}^k\to\mathbb{R}^{n-k}$, if and only if
\begin{equation*}
S=\{\,(x_1,\ldots,x_k,x_{k+1},\ldots,x_n)\in\mathbb{R}^n\mid\psi(x_1,\ldots,x_k)=(x_{k+1},\ldots,x_n)\,\}
\end{equation*}
And we say that a physical model $(S,A)$ is \emph{induced}\index{physical models!induced by a function} by a function $\psi:\mathbb{R}^k\to\mathbb{R}^{n-k}$ if and only if $(S,A)$ is in normal form and $S$ is the graph of $\psi$.  Therefore, if $(S,A)$ is induced by $\psi:\mathbb{R}^k\to\mathbb{R}^{n-k}$ and we are given $k$ real numbers $x_1$, $x_2$,~$\ldots$~,~$x_k$ as data, then the corresponding set of states predicted by $(S,A)$ is a singleton set $P=\{s\}$.  Namely,
\begin{equation*}
s=(x_1,x_2,\ldots,x_k,x_{k+1},\ldots,x_n)
\end{equation*}
where $x_{k+1}$, $x_{k+2}$,~$\ldots$~,~$x_n$ are the real numbers uniquely determined by the equation
\begin{equation*}
\psi(x_1,x_2,\ldots,x_k)=(x_{k+1},x_{k+2},\ldots,x_n)
\end{equation*}
It then follows from Theorem~\ref{t:brpredict} that given complete oracles for $x_1$, $x_2$,~$\ldots$~,~$x_k$ in the standard topology of $\mathbb{R}$, and given a basic representation $R$ of $(S,A)$ in the standard topology of $\mathbb{R}^n$, there is an effective procedure (relative to the given oracles and $R$) for finding a basic representation of $\{s\}$.  But by Theorem~\ref{t:broracle} there is, in general, no effective procedure for finding an \emph{oracle} for $s$.  In the next section we describe a special class of basic representations for which such an effective procedure does exist.

\section{Kreisel's Criterion} \label{s:kreisel}

A common way to interpret Kreisel's\index{Kreisel's criterion} criterion is to say that a physical model $(S,A)$ satisfies Kreisel's criterion on $\mathbb{R}^k$ if and only if $(S,A)$ is induced by a function $\psi:\mathbb{R}^k\to\mathbb{R}^{n-k}$ for some positive integer $n>k$, and for each positive integer $j\leq n-k$ there is a partial recursive function $\kappa_j$ such that if $\phi_1$, $\phi_2$,~$\ldots$~,~$\phi_k$ are nested oracles for real numbers $x_1$, $x_2$,~$\ldots$~,~$x_k$ in the effective topology $(\mathcal{I},\iota)$, then $\kappa_j(\phi_1,\phi_2,\ldots,\phi_k,m)$ is defined for all $m\in\mathbb{N}$ and $\lambda m\bigl[\kappa_j(\phi_1,\phi_2,\ldots,\phi_k,m)\bigr]$ is a nested oracle for $\varpi_j^{n-k}\bigl(\psi(x_1,x_2,\ldots,x_k)\bigr)$ in $(\mathcal{I},\iota)$.  Note by Theorem~\ref{t:convert} that the effective topology $(\mathcal{I},\iota)$ in this statement can be replaced, without loss of generality, with any effective topology $(\mathcal{B},\iota)$ that has a recursively enumerable subset relation and such that $\mathcal{B}\subseteq\mathcal{I}$ is a basis for the standard topology of the real numbers.

Practical computer models that use multiple-precision interval\index{interval arithmetic} arithmetic~\cite{rM66} provide examples of physical models satisfying Kreisel's criterion.  Typically, such models are induced by a function $\psi:\mathbb{R}^k\to\mathbb{R}^{n-k}$ where, for each positive integer $j\leq n-k$, there is a recursive function $\xi_j$ such that if the data $x_1$, $x_2$,~$\ldots$~,~$x_k$ lie within the intervals $\cuhinterval{a_1}{b_1}$, $\cuhinterval{a_2}{b_2}$,~$\ldots$~,~$\cuhinterval{a_k}{b_k}$ respectively, then $\varpi_j^{n-k}\bigl(\psi(x_1,x_2,\linebreak[0]\ldots,x_k)\bigr)$ lies within the interval
\begin{equation*}
\xi_j\bigl(\cuhinterval{a_1}{b_1},\cuhinterval{a_2}{b_2},\ldots,\cuhinterval{a_k}{b_k}\bigr)
\end{equation*}
In such a case, the partial recursive function $\kappa_j$ in Kreisel's criterion is given by
\begin{equation*}
\kappa_j(\phi_1,\phi_2,\ldots,\phi_k,m)=\xi_j\bigl(\phi_1(m),\phi_2(m),\ldots,\phi_k(m)\bigr)
\end{equation*}

We are now prepared to state the following theorem, which holds uniformly.
\begin{cuhtheorem} \label{t:kreisel}
If a physical model satisfies Kreisel's criterion on $\mathbb{R}^k$, then the model has a basic representation that is isomorphic to a computable physical model.
\end{cuhtheorem}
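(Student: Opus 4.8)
The plan is to construct, for the given model, an explicit basic representation $(R,H)$ whose state set $R$ is recursively enumerable, and then apply Theorem~\ref{t:remodel}. By hypothesis the model is the normal-form model $(S,A)$ induced by a total function $\psi\colon\mathbb{R}^k\to\mathbb{R}^{n-k}$, so $A=\{\varpi_1^n,\dots,\varpi_n^n\}$ and $S$ is the graph of $\psi$. I would impose the standard topology of $\mathbb{R}$, with basis $\mathcal{I}$ and coding $\iota$, on each of the $n$ coordinates, and take $(\mathcal{B},\nu)$ to be the effective product $(\mathcal{I},\iota)\otimes\cdots\otimes(\mathcal{I},\iota)$; this is a $T_1$ effective topology whose subset relation is recursively enumerable, since $B_1\times\cdots\times B_n\subseteq C_1\times\cdots\times C_n$ iff $B_i\subseteq C_i$ for each $i$. (By Theorem~\ref{t:convert} the particular bases chosen here are immaterial.) A basic representation of $(S,A)$ is then a pair $(R,H)$ with $H=\{\pi_1^n,\dots,\pi_n^n\}$ and $R$ a basic representation of $S\subseteq\mathbb{R}^n$ in $(\mathcal{B},\nu)$; since the projections $\pi_i^n$ are total recursive, once such an $R$ is shown to be recursively enumerable, Theorem~\ref{t:remodel} gives that $(R,H)$ is isomorphic to a computable physical model, which is the conclusion we want.

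The heart of the argument is the definition of $R$, where Kreisel's criterion enters. I would let $R$ consist of the codes of all product boxes $\iota\cuhinterval{c_1}{d_1}\times\cdots\times\iota\cuhinterval{c_n}{d_n}$ of rational intervals for which there exist an $m\in\mathbb{N}$ and, for each $i\le k$, a finite strictly nested sequence of rational intervals $\tau_i=\bigl(\tau_i(0)\supseteq\tau_i(1)\supseteq\cdots\supseteq\tau_i(L_i)\bigr)$ whose smallest term is exactly $\iota\cuhinterval{c_i}{d_i}$, such that, running each $\kappa_j$ as an oracle computation with the finite partial oracles $\tau_1,\dots,\tau_k$ on input $m$, all of $\kappa_1(\tau_1,\dots,\tau_k,m),\dots,\kappa_{n-k}(\tau_1,\dots,\tau_k,m)$ halt while querying only arguments inside $\cuhdom\tau_1,\dots,\cuhdom\tau_k$ respectively, with $\kappa_j(\tau_1,\dots,\tau_k,m)\subseteq\iota\cuhinterval{c_{k+j}}{d_{k+j}}$ for each $j$. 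By the use principle for oracle computations, ``the computation of $\kappa_j$ on these finite partial oracles halts within their domains'' is a recursively enumerable condition, so dovetailing over all finite data $(m,\tau_1,\dots,\tau_k)$ and all rational boxes shows $R$ is recursively enumerable; and $R\subseteq\cuhdom_\mathcal{B}\nu$ trivially. Note that every box in $R$ meets $S$: for any input point $(x_1,\dots,x_k)$ in the product of the smallest $\tau_i$-intervals, extending the $\tau_i$ to full nested oracles $\phi_i$ for the $x_i$ gives $\varpi_j^{n-k}\bigl(\psi(x_1,\dots,x_k)\bigr)\in\kappa_j(\phi_1,\dots,\phi_k,m)=\kappa_j(\tau_1,\dots,\tau_k,m)\subseteq\iota\cuhinterval{c_{k+j}}{d_{k+j}}$.

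Next I would verify condition~\ref{i:basic} of the definition of a basic representation, using the equivalent form ``$x\in S$ iff some oracle for $x$ has its image contained in $R$.'' If $x=(x_1,\dots,x_n)\in S$, fix nested oracles $\phi_i$ for the $x_i$; by Kreisel's criterion $\lambda m\bigl[\kappa_j(\phi_1,\dots,\phi_k,m)\bigr]$ is a nested oracle for $x_{k+j}=\varpi_j^{n-k}\bigl(\psi(x_1,\dots,x_k)\bigr)$, so given any target box $C$ with $x\in C$ one can pick $m$ so large that $\phi_i(m)\subseteq C_i$ and $\kappa_j(\phi_1,\dots,\phi_k,m)\subseteq C_{k+j}$; letting $\tau_i$ be $\phi_i$ restricted to an initial segment long enough to contain all queries of the $n-k$ computations and to have its last term $\subseteq C_i$, the box whose $i$-th factor ($i\le k$) is that last term and whose $(k+j)$-th factor is $\kappa_j(\phi_1,\dots,\phi_k,m)$ lies in $R$, contains $x$, and is contained in $C$, so such boxes form an oracle for $x$ inside $R$. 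Conversely, if $x$ has a local basis inside $\nu(R)$, choose boxes $B^{(\ell)}\in R$ with $x\in B^{(\ell)}$ and $\operatorname{diam}B^{(\ell)}\to0$, each witnessed by $(m^{(\ell)},\tau_1^{(\ell)},\dots,\tau_k^{(\ell)})$; since the $i$-th factor of $B^{(\ell)}$ \emph{is} the smallest interval of $\tau_i^{(\ell)}$, the point $x_i$ lies in that smallest interval, hence (by nestedness) in every interval of $\tau_i^{(\ell)}$, so extending each $\tau_i^{(\ell)}$ to a nested oracle for $x_i$ yields $\varpi_j^{n-k}\bigl(\psi(x_1,\dots,x_k)\bigr)\in\kappa_j(\tau_1^{(\ell)},\dots,\tau_k^{(\ell)},m^{(\ell)})\subseteq B^{(\ell)}_{k+j}$; as $B^{(\ell)}_{k+j}$ also contains $x_{k+j}$ and shrinks to a point, $\varpi_j^{n-k}\bigl(\psi(x_1,\dots,x_k)\bigr)=x_{k+j}$ for every $j$, i.e.\ $x\in S$. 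Hence $R$ is a basic representation of $S$, and $(R,H)$ is the desired basic representation of $(S,A)$.

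The delicate point, around which the whole construction is organized, is that $R$ must at once be recursively enumerable from the $\kappa_j$, contain arbitrarily small boxes around every point of the graph, and consist only of boxes that already force the graph relation in the limit. Asking literally for $\psi$ to map a box's input factors into its output factors is not recursively enumerable from the $\kappa_j$ alone, whereas recording a single unrestricted application of $\kappa_j$ forces nothing. Tying a box's input factors to the finite partial oracles actually queried is what reconciles these: membership of a point in such a box certifies that the point is consistent with the finite oracle data used, and locality of $\kappa_j$ upgrades this to ``$\psi$ sends it into the recorded output interval.'' I expect this design of $R$ to be the main obstacle; the remaining verifications are routine. Finally, the passage from the $\kappa_j$ to $R$ is effective in indices, and $R$ is always infinite, so Theorem~\ref{t:intstates} and hence Theorem~\ref{t:remodel} apply uniformly, which is why the theorem holds uniformly.
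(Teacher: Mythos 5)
Your proposal is correct and follows essentially the same route as the paper's proof: both build a recursively enumerable basic representation $R$ of the graph of $\psi$ out of boxes whose first $k$ factors encode finite truncations of nested oracles and whose remaining factors record the halting outputs of the $\kappa_j$ on those truncations, with the use principle supplying both the recursive enumerability and the agreement with the true values, and both then conclude via Theorem~\ref{t:remodel}. The only differences are matters of bookkeeping: the paper fixes the basis $\mathcal{I}_{10,c}$ of decimal intervals so that each interval canonically determines its finite partial oracle $\sigma_u$ (rather than existentially quantifying over witnesses $\tau_i$ as you do), and it verifies the basic-representation condition by exploiting the closedness of the graph of the continuous function $\psi$ where you argue directly with shrinking boxes.
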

\begin{proof}
Suppose that $(S,A)$ is a physical model satisfying Kreisel's criterion on $\mathbb{R}^k$.  In particular, suppose that $(S,A)$ is induced by a function $\psi:\mathbb{R}^k\to\mathbb{R}^{n-k}$, and for each positive integer $j\leq n-k$ suppose there is a partial recursive function $\kappa_j$ such that if $\phi_1$, $\phi_2$,~$\ldots$~,~$\phi_k$ are nested oracles for real numbers $x_1$, $x_2$,~$\ldots$~,~$x_k$ in the effective topology $(\mathcal{I}_{10,c}\,,\iota)$, then $\kappa_j(\phi_1,\phi_2,\ldots,\phi_k,m)$ is defined for all $m\in\mathbb{N}$ and $\lambda m\bigl[\kappa_j(\phi_1,\phi_2,\ldots,\phi_k,m)\bigr]$ is a nested oracle for $\varpi_j^{n-k}\bigl(\psi(x_1,x_2,\ldots,x_k)\bigr)$ in $(\mathcal{I}_{10,c}\,,\iota)$, where $c$ is a positive rational number.

Let $I=\cuhdom_{\mathcal{I}_{10,c}}\iota$.  Then, for each interval $u\in I$ with midpoint $p$ and with $d$ digits of precision, define the partial recursive function $\sigma_u$ so that
\begin{equation*}
\sigma_u(l)=
\begin{cases}
\cuhomicron_p(l) &\text{if $l<d$}\\
\text{undefined} &\text{if $l\geq d$}
\end{cases}
\end{equation*}
for each $l\in\mathbb{N}$, where $\cuhomicron_p$ is the standard decimal oracle described in Section~\ref{s:continuous}.  Note that for each $x\in\mathbb{R}$ and each $m\in\mathbb{N}$, $\sigma_{\cuhomicron_x(m)}(l)=\cuhomicron_x(l)$ for all non-negative integers $l\leq m$.  Now, for each positive integer $j\leq n-k$, choose a program to compute $\kappa_j$ and let $\kappa_j(\sigma_{u_1},\sigma_{u_2},\ldots,\sigma_{u_k},m)$ be undefined if for some positive integer $i\leq k$ and some $l\in\mathbb{N}$ the program calls $\sigma_{u_i}(l)$ in the course of the computation and $\sigma_{u_i}(l)$ is undefined.  Note that given $u_1$, $u_2$,~$\ldots$~,~$u_k$, the set of all $m\in\mathbb{N}$ such that $\kappa_j(\sigma_{u_1},\sigma_{u_2},\ldots,\sigma_{u_k},m)$ is defined is a recursively enumerable set, since for each $m$ we can follow the computation and test whether or not $\sigma_{u_i}(l)$ is defined whenever $\sigma_{u_i}(l)$ is called by the program, for any $i$ and $l$.  Let $R$ be the set of all $\langle u_1,u_2,\ldots,u_n\rangle$ such that $u_i\in I$ for each positive integer $i\leq k$, and such that $u_{k+j}=\kappa_j(\sigma_{u_1},\sigma_{u_2},\ldots,\sigma_{u_k},m)$ for some $m\in\mathbb{N}$ if $j\leq n-k$ is a positive integer.  Note that $R$ is also recursively enumerable.  Let $H=\{\pi_1^n,\pi_2^n,\ldots,\pi_n^n\}$.  We claim that $(R,H)$ is a basic representation of $(S,A)$.

As a brief digression from the proof, suppose that $\langle u_1,u_2,\ldots,u_n\rangle\in R$ and note that for each positive integer $i\leq k$, if $x_i\in\iota(u_i)$ then there exists a nested oracle $\phi_i$ for $x_i$ in $(\mathcal{I}_{10,c}\,,\iota)$ such that $\phi_i(m)=\sigma_{u_i}(m)$ for all $m\in\mathbb{N}$ where $\sigma_{u_i}(m)$ is defined.  And since for each positive integer $j\leq n-k$ we have that $\lambda m\bigl[\kappa_j(\phi_1,\phi_2,\ldots,\phi_k,m)\bigr]$ is a nested oracle for $x_{k+j}=\varpi_j^{n-k}\bigl(\psi(x_1,x_2,\ldots,x_k)\bigr)$, it follows that
\begin{equation*}
x_{k+j}\in\iota\bigl(\kappa_j(\phi_1,\phi_2,\ldots,\phi_k,m)\bigr)=\iota\bigl(\kappa_j(\sigma_{u_1},\sigma_{u_2},\ldots,\sigma_{u_k},m)\bigr)=\iota(u_{k+j})
\end{equation*}
for some $m\in\mathbb{N}$.  Hence, given any $\langle u_1,u_2,\ldots,u_n\rangle\in R$, if $x_i\in\iota(u_i)$ for each positive integer $i\leq k$, then $x_{k+j}=\varpi_j^{n-k}\bigl(\psi(x_1,x_2,\ldots,x_k)\bigr)\in\iota(u_{k+j})$ for each positive integer $j\leq n-k$.

Now, returning to the proof of Theorem~\ref{t:kreisel}, let
\begin{equation*}
(\mathcal{B},\nu)=\overbrace{(\mathcal{I}_{10,c}\,,\iota)\otimes(\mathcal{I}_{10,c}\,,\iota)\otimes\cdots\otimes(\mathcal{I}_{10,c}\,,\iota)}^{\text{$n$ factors}}
\end{equation*}
and note that the basis of $(\mathcal{B},\nu)$ is a basis for the standard topology of $\mathbb{R}^n$.  Note that because $(S,A)$ satisfies Kreisel's criterion, the function $\psi$ which induces $(S,A)$ is continuous.  And since continuous real functions have closed graphs, the set $S$ is closed in the standard topology of $\mathbb{R}^n$.  Because $S$ is closed, to prove that $R$ is a basic representation of $S$ in $(\mathcal{B},\nu)$, it suffices to show that for each $r\in R$ there exists an $x\in S$ with $x\in\nu(r)$, and that for each $x\in S$ there is a local basis $\mathcal{L}_x$ for $x$ with $\mathcal{L}_x\subseteq\nu(R)$.

By the definition of $R$, if $r=\langle u_1,u_2,\ldots,u_n\rangle\in R$ then for each positive integer $j\leq n-k$,
\begin{equation*}
u_{k+j}=\kappa_j(\sigma_{u_1},\sigma_{u_2},\ldots,\sigma_{u_k},m)
\end{equation*}
for some $m\in\mathbb{N}$.  So, if $p_i$ is the midpoint of the interval $u_i$ for each positive integer $i\leq k$ , then
\begin{equation*}
u_{k+j}=\kappa_j(\cuhomicron_{p_1},\cuhomicron_{p_2},\ldots,\cuhomicron_{p_k},m)
\end{equation*}
And by Kreisel's criterion $\lambda m\bigl[\kappa_j(\cuhomicron_{p_1},\cuhomicron_{p_2},\ldots,\cuhomicron_{p_k},m)\bigr]$ is an oracle for $\varpi_j^{n-k}\bigl(\psi(p_1,\linebreak[0]p_2,\ldots,p_k)\bigr)$.  Therefore, for each $r\in R$ there exists an $(x_1,x_2,\ldots,x_n)\in S$ with $(x_1,x_2,\ldots,x_n)\in\nu(r)$.  Namely, $x_i=p_i$ for each positive integer $i\leq k$ and
\begin{equation*}
x_{k+j}=\varpi_j^{n-k}\bigl(\psi(p_1,p_2,\ldots,p_k)\bigr)
\end{equation*}
for each positive integer $j\leq n-k$.

Now suppose that $(x_1,x_2,\ldots,x_n)$ is an arbitrary member of $S$.  Again, by Kreisel's criterion, for each positive integer $j\leq n-k$, the function $\lambda m\bigl[\kappa_j(\cuhomicron_{x_1},\cuhomicron_{x_2},\linebreak[0]\ldots,\cuhomicron_{x_k},m)\bigr]$ is an oracle for $x_{k+j}$.  But for each $m\in\mathbb{N}$ and each positive integer $i\leq k$, the computation for $\kappa_j(\cuhomicron_{x_1},\cuhomicron_{x_2},\ldots,\cuhomicron_{x_k},m)$ has only finitely many steps, and so the oracle $\cuhomicron_{x_i}$ can only be called finitely many times during the course of the computation.  Hence, for each $m\in\mathbb{N}$ there exists a non-negative integer $l_i$ for each $i\leq k$, such that for any non-negative integer $l_i^\prime\geq l_i$, if $u_i=\cuhomicron_{x_i}(l_i^\prime)$ then $\kappa_j(\sigma_{u_1},\sigma_{u_2},\ldots,\sigma_{u_k},m)$ is defined and
\begin{equation*}
\kappa_j(\sigma_{u_1},\sigma_{u_2},\ldots,\sigma_{u_k},m)=\kappa_j(\cuhomicron_{x_1},\cuhomicron_{x_2},\ldots,\cuhomicron_{x_k},m)
\end{equation*}
Of course, for each positive integer $j\leq n-k$ the interval
\begin{equation*} u_{k+j}=\kappa_j(\cuhomicron_{x_1},\cuhomicron_{x_2},\ldots,\cuhomicron_{x_k},m)
\end{equation*}
can be made arbitrarily small by choosing a suitably large value of $m$, and for each positive integer $i\leq k$ the interval $u_i$ can be made arbitrarily small by choosing a suitably large value of $l_i^\prime$.  Furthermore, by definition, $\langle u_1,u_2,\ldots,u_n\rangle\in R$.  It immediately follows that for each $x=(x_1,x_2,\ldots,x_n)\in S$ there is a local basis $\mathcal{L}_x$ for $x$ such that $\mathcal{L}_x\subseteq\nu(R)$.  We may conclude that $(R,H)$ is a basic representation of $(S,A)$.  And since $R$ is recursively enumerable, it follows from Theorem~\ref{t:remodel} that $(R,H)$ is isomorphic to a computable physical model.
\end{proof}

A physical model $(S,A)$ that satisfies Kreisel's criterion on $\mathbb{R}^k$ is uniquely determined by the functions $\kappa_1$, $\kappa_2$,~$\ldots$~,~$\kappa_{n-k}$.  Moreover, the proof of Theorem~\ref{t:kreisel} describes an effective procedure for finding a basic representation of $(S,A)$, given programs for computing $\kappa_1$, $\kappa_2$,~$\ldots$~,~$\kappa_{n-k}$.  Let $\mathcal{K}_{k,n,c}$ be the collection of all basic representations of physical models that are constructed from physical models satisfying Kreisel's criterion according to the procedure in the proof of Theorem~\ref{t:kreisel}, where $c$ is the positive rational number which appears in that proof.  An immediate question is whether there exists an effective procedure for the inverse operation.  That is, given a basic representation in $\mathcal{K}_{k,n,c}$, is there an effective procedure for constructing partial recursive functions $\kappa_1$, $\kappa_2$,~$\ldots$~,~$\kappa_{n-k}$?  In the proof of the following theorem, we show that the answer is ``Yes.''  Therefore, for every physical model satisfying Kreisel's criterion on $\mathbb{R}^k$, there is a computable physical model that may be used in its place, to predict the values of observable quantities given the data.
\begin{cuhtheorem}
If $(R,H)\in\mathcal{K}_{k,n,c}$ and if $\psi:\mathbb{R}^k\to\mathbb{R}^{n-k}$ is the function whose graph has basic representation $R$, then there exist partial recursive functions $\kappa_1$, $\kappa_2$,~$\ldots$~,~$\kappa_{n-k}$ such that if $\phi_1$, $\phi_2$,~$\ldots$~,~$\phi_k$ are nested oracles for real numbers $x_1$, $x_2$,~$\ldots$~,~$x_k$ in the effective topology $(\mathcal{I}_{10,c}\,,\iota)$, then for each positive integer $j\leq n-k$, $\kappa_j(\phi_1,\phi_2,\ldots,\phi_k,m)$ is defined for all $m\in\mathbb{N}$ and $\lambda m\bigl[\kappa_j(\phi_1,\phi_2,\ldots,\phi_k,m)\bigr]$ is a nested oracle for $\varpi_j^{n-k}\bigl(\psi(x_1,x_2,\ldots,x_k)\bigr)$ in $(\mathcal{I}_{10,c}\,,\iota)$.
\end{cuhtheorem}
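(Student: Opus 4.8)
The plan is to invert the construction of $\mathcal{K}_{k,n,c}$ given in the proof of Theorem~\ref{t:kreisel}, reading the functions $\kappa_j$ off of the recursively enumerable set $R$ by a suitable search. Fix $(R,H)\in\mathcal{K}_{k,n,c}$. By that construction $R$ is a recursively enumerable set whose members are tuples $\langle u_1,\ldots,u_n\rangle$ with $u_i\in I=\cuhdom_{\mathcal{I}_{10,c}}\iota$ for each $i\leq k$, and $\psi$ is the function whose graph $S$ is the set represented by $R$. Two facts established inside the proof of Theorem~\ref{t:kreisel} will be used: first (the digression), that whenever $\langle u_1,\ldots,u_n\rangle\in R$ and $x_i\in\iota(u_i)$ for all $i\leq k$, then $\varpi_j^{n-k}\bigl(\psi(x_1,\ldots,x_k)\bigr)\in\iota(u_{k+j})$ for all $j\leq n-k$; and second, that for every point $(x_1,\ldots,x_n)$ of $S$ and every $m\in\mathbb{N}$ there is a tuple $\langle u_1,\ldots,u_n\rangle\in R$ with $x_i\in\iota(u_i)$ for each $i\leq k$ (one may take $u_i=\cuhomicron_{x_i}(l_i')$ for suitably large $l_i'$) and with $u_{k+j}$ having at least $m$ digits of precision.

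First I would build, for each $j\leq n-k$, a partial recursive functional $\hat\kappa_j$ with the property that if $\phi_1,\ldots,\phi_k$ are oracles for $x_1,\ldots,x_k$ in $(\mathcal{I}_{10,c}\,,\iota)$, then $\hat\kappa_j(\phi_1,\ldots,\phi_k,m)$ is defined for all $m$ and $\lambda m\bigl[\hat\kappa_j(\phi_1,\ldots,\phi_k,m)\bigr]$ is an oracle (not necessarily nested) for $x_{k+j}=\varpi_j^{n-k}\bigl(\psi(x_1,\ldots,x_k)\bigr)$. On input $m$, $\hat\kappa_j$ dovetails the enumeration of $R$ with the computations of $\phi_i(0),\phi_i(1),\ldots$ over $i\leq k$, halting and outputting $u_{k+j}$ as soon as it meets a tuple $\langle u_1,\ldots,u_n\rangle\in R$ for which $u_{k+j}$ has at least $m$ digits of precision and, for each $i\leq k$, some value $\phi_i(l_i)$ has already been produced with $\iota\bigl(\phi_i(l_i)\bigr)\subseteq\iota(u_i)$; the last test is recursive since $(\mathcal{I}_{10,c}\,,\iota)$ has a recursively enumerable subset relation. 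The search terminates: by the second cited fact such a tuple exists with $x_i\in\iota(u_i)$, and since $\iota\bigl(\phi_i(\mathbb{N})\bigr)$ is a local basis for $x_i$ there is an $l_i$ with $\iota\bigl(\phi_i(l_i)\bigr)\subseteq\iota(u_i)$, which the dovetailing will eventually find. The output is correct because $\iota\bigl(\phi_i(l_i)\bigr)\subseteq\iota(u_i)$ forces $x_i\in\iota(u_i)$, hence $x_{k+j}\in\iota(u_{k+j})$ by the digression, while $u_{k+j}$ having at least $m$ digits of precision makes the open interval $\iota(u_{k+j})$ have width at most $(1+2c)/10^m$; since an interval of width smaller than the distance from $x_{k+j}$ to the complement of any given basis element $B\ni x_{k+j}$ is contained in $B$ once it contains $x_{k+j}$, the sets $\iota\bigl(\hat\kappa_j(\phi_1,\ldots,\phi_k,m)\bigr)$, $m\in\mathbb{N}$, form a local basis for $x_{k+j}$, so $\lambda m\bigl[\hat\kappa_j(\phi_1,\ldots,\phi_k,m)\bigr]$ is an oracle for $x_{k+j}$.

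To finish, I would invoke Theorem~\ref{t:nest}: because $(\mathcal{I}_{10,c}\,,\iota)$ has a recursively enumerable subset relation, there is a single partial recursive functional $N$ that converts any oracle for a point into a nested oracle for that same point in $(\mathcal{I}_{10,c}\,,\iota)$. Putting $\kappa_j(\phi_1,\ldots,\phi_k,m)=N\bigl(\lambda m'\bigl[\hat\kappa_j(\phi_1,\ldots,\phi_k,m')\bigr],\,m\bigr)$ defines a partial recursive function (uniformly in a program for $R$) which, applied to nested oracles $\phi_1,\ldots,\phi_k$ for $x_1,\ldots,x_k$, is total in $m$ because $\lambda m'\bigl[\hat\kappa_j(\phi_1,\ldots,\phi_k,m')\bigr]$ is a total oracle and the construction in the proof of Theorem~\ref{t:nest} then halts on every input, and which yields a nested oracle for $\varpi_j^{n-k}\bigl(\psi(x_1,\ldots,x_k)\bigr)$ in $(\mathcal{I}_{10,c}\,,\iota)$, as required.

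The step I expect to be the main obstacle is the verification that the search in $\hat\kappa_j$ terminates. This forces one to extract from the proof of Theorem~\ref{t:kreisel} the precise statement that $R$ contains tuples whose first $k$ coordinates are arbitrarily small neighborhoods of the given reals $x_1,\ldots,x_k$ and whose $(k+j)$th coordinate has arbitrarily high precision, and then to reconcile the fact that those tuples were manufactured from the standard decimal oracles $\cuhomicron_{x_i}$ with the fact that here one is handed arbitrary (nested) oracles $\phi_i$ for the same reals; phrasing the search condition as the subset relation $\iota(\phi_i(l_i))\subseteq\iota(u_i)$ rather than as an equality of codes is what bridges this gap.
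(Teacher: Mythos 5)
Your proposal is correct and follows essentially the same route as the paper: both enumerate the recursively enumerable subset of $R$ whose first $k$ coordinates are compatible with the given data oracles, use the ``digression'' from the proof of Theorem~\ref{t:kreisel} to conclude that the $(k+j)$-th coordinate of every such tuple contains $\varpi_j^{n-k}\bigl(\psi(x_1,\ldots,x_k)\bigr)$, and then extract a nested oracle relative to $\phi_1,\ldots,\phi_k$. The differences are only in the plumbing: the paper first converts the $\phi_i$ to complete oracles via Theorem~\ref{t:complete} and tests equality of codes so that Theorem~\ref{t:brpredict} yields the filtered set $Q$ as a basic representation of the singleton, whose enumeration is an oracle for the point of the product space that is then projected coordinatewise, whereas you filter with the subset relation against the raw oracles and establish the local-basis property coordinate-by-coordinate with an explicit precision bound before invoking Theorem~\ref{t:nest}.
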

\begin{proof}
Suppose that $(R,H)\in\mathcal{K}_{k,n,c}$ and that $\psi:\mathbb{R}^k\to\mathbb{R}^{n-k}$ is the function whose graph has basic representation $R$.  Note by the proof of Theorem~\ref{t:kreisel} that $R$ is recursively enumerable.  Now, given any oracles $\phi_1$, $\phi_2$,~$\ldots$~,~$\phi_k$ for real numbers $x_1$, $x_2$,~$\ldots$~,~$x_k$ in $(\mathcal{I}_{10,c}\,,\iota)$, it follows from Theorem~\ref{t:complete} that there are complete oracles $\phi_1^\prime$, $\phi_2^\prime$,~$\ldots$~,~$\phi_k^\prime$ for $x_1$, $x_2$,~$\ldots$~,~$x_k$ in $(\mathcal{I}_{10,c}\,,\iota)$, such that $\phi_1^\prime$, $\phi_2^\prime$,~$\ldots$~,~$\phi_k^\prime$ are recursive relative to $\phi_1$, $\phi_2$,~$\ldots$~,~$\phi_k$ uniformly.  Let $x_{k+1}$, $x_{k+2}$,~$\ldots$~,~$x_n$ be the real numbers uniquely determined by the equation
\begin{equation*}
\psi(x_1,x_2,\ldots,x_k)=(x_{k+1},x_{k+2},\ldots,x_n)
\end{equation*}
Then by the proof of Theorem~\ref{t:brpredict},
\begin{equation*}
Q=\bigl\{\,\langle u_1,u_2,\ldots,u_n\rangle\in R\bigm|\bigl(\forall\,i\in\{1,2,\ldots,k\}\bigr)\bigl(\exists\,m\in\mathbb{N}\bigr)\bigl[u_i=\phi_i^\prime(m)\bigr]\,\bigr\}
\end{equation*}
is a basic representation of $\bigl\{(x_1,x_2,\ldots,x_n)\bigr\}$ in
\begin{equation*}
(\mathcal{B},\nu)=\overbrace{(\mathcal{I}_{10,c}\,,\iota)\otimes(\mathcal{I}_{10,c}\,,\iota)\otimes\cdots\otimes(\mathcal{I}_{10,c}\,,\iota)}^{\text{$n$ factors}}
\end{equation*}
And since $R$ is recursively enumerable, the set $Q$ is recursively enumerable relative $\phi_1$, $\phi_2$,~$\ldots$~,~$\phi_k$ uniformly.

Now, since $x_i\in\iota\bigl(\phi_i(m)\bigr)$ for each positive integer $i\leq k$, it follows from the definition of $Q$ that $x_i\in\iota(u_i)$ for each $\langle u_1,u_2,\ldots,u_n\rangle\in Q$.  But recall from the proof of Theorem~\ref{t:kreisel} that $R$ has the property that if $x_i\in\iota(u_i)$ for each positive integer $i\leq k$, then $x_j\in\iota(u_{k+j})$ for each positive integer $j\leq n-k$.  Hence,
\begin{equation*}
(x_1,x_2,\ldots,x_n)\in\nu\langle u_1,u_2,\ldots,u_n\rangle
\end{equation*}
for each $\langle u_1,u_2,\ldots,u_n\rangle\in Q$.  It immediately follows from the definition of a basic representation that $\nu(Q)$ is a local basis for the point $(x_1,x_2,\ldots,x_n)$.  Therefore, for any function $\kappa:\mathbb{N}\to R$ such that $\kappa(\mathbb{N})=Q$, the function $\kappa$ is an oracle for $(x_1,x_2,\ldots,x_n)$ in $(\mathcal{B},\nu)$.  And since $Q$ is recursively enumerable relative to $\phi_1$, $\phi_2$,~$\ldots$~,~$\phi_k$ uniformly, there is a nested oracle $\kappa$ that is recursive relative to $\phi_1$, $\phi_2$,~$\ldots$~,~$\phi_k$ uniformly.  So, if we define
\begin{equation*}
\kappa_j(\phi_1,\phi_2,\ldots,\phi_k,m)=\pi_{k+j}^n\bigl(\kappa(m)\bigr)
\end{equation*}
for each positive integer $j\leq n-k$ and for each $m\in\mathbb{N}$, then $\kappa_j$ is partial recursive, $\kappa_j(\phi_1,\phi_2,\ldots,\linebreak[0]\phi_k,m)$ is defined for all $m\in\mathbb{N}$, and $\lambda m\bigl[\kappa_j(\phi_1,\phi_2,\ldots,\phi_k,m)\bigr]$ is a nested oracle for $\varpi_j^{n-k}\bigl(\psi(x_1,x_2,\ldots,x_k)\bigr)=x_{k+j}$ in $(\mathcal{I}_{10,c}\,,\iota)$.
\end{proof}

\section{Acknowledgments}

This paper is adapted from a chapter of our Ph.D. thesis~\cite{mS10}.  We received many helpful suggestions from our thesis advisor, Richard\index{Statman, Richard} Statman, and from the members of our thesis committee, most notably Robert\index{Batterman, Robert} Batterman and Lenore\index{Blum, Lenore} Blum.  In the same regard, we also benefited from the careful reading and subsequent suggestions of Kevin\index{Kelly, Kevin} Kelly, Klaus\index{Weihrauch, Klaus} Weihrauch, Stephen\index{Wolfram, Stephen} Wolfram, and Hector\index{Zenil, Hector} Zenil.  Zenil\index{Zenil, Hector} has also made us aware of a draft paper, posted by Marcus\index{Hutter, Marcus} Hutter~\cite{mH09}, which expresses ideas very similar to those presented here.

\end{document}